\newtheorem{thm}{Theorem}[section]
\newtheorem{cor}[thm]{Corollary}
\newtheorem{lmm}[thm]{Lemma}
\newtheorem{prp}[thm]{Proposition}
\newtheorem{dfn}[thm]{Definition}
\theoremstyle{remark}
\newtheorem*{rmk*}{Remark}
\newtheorem*{ack}{\rm{\textbf{Acknowledgement}}}
\title[]{Traces of CM values and cycle integrals of polyharmonic Maass forms}
\author[]{Toshiki Matsusaka}
\date{}
\address{Graduate School of Mathematics, Kyushu University, Motooka 744, Nishi-ku Fukuoka 819-0395, Japan,}
\email{toshikimatsusaka@gmail.com}
\subjclass[2010]{Primary 11F37, Secondary 11F12.}
\keywords{Polyharmonic Maass forms; Harmonic; Modular forms; Fourier coefficients}
\begin{document}
\maketitle

\begin{abstract}
Lagarias and Rhoades generalized harmonic Maass forms by considering forms which are annihilated by a number of iterations of the action of the $\xi$-operator. In our previous work, we considered polyharmonic weak Maass forms by allowing the exponential growth at cusps, and constructed a basis of the space of such forms. This paper focuses on the case of half-integral weight. We construct a basis as an analogue of our work, and give arithmetic formulas for the Fourier coefficients in terms of traces of CM values and cycle integrals of polyharmonic weak Maass forms. These results put the known results into a common framework.
\end{abstract}


\section{Introduction}\label{s1}
Traces of CM values have been the subject of extensive research. For a negative integer $d$, we denote by $\mathcal{Q}_d$ the set of positive definite integral binary quadratic forms of discriminant $d$. Under the usual right action of $\mathrm{SL}_2(\mathbb{Z})$, it has finitely many classes $\mathcal{Q}_d/\mathrm{SL}_2(\mathbb{Z})$, whose order is so-called the class number. For each $Q \in \mathcal{Q}_d$, the order of the stabilizer $w_Q^{} := |\mathrm{PSL}_2(\mathbb{Z})_Q|$ is equal to $3, 2,$ or $1$ according as $Q$ is $\mathrm{SL}_2(\mathbb{Z})$-equivalent to $X^2+XY+Y^2$, $X^2+Y^2$ up to a constant multiple, or otherwise, respectively. Under these notations, the Kronecker-Hurwitz class number $H(|d|)$ is defined by
\begin{align*}
	H(|d|) := \sum_{Q \in \mathcal{Q}_d/\mathrm{SL}_2(\mathbb{Z})}\frac{1}{w_Q^{}} \quad \text{for }d<0,
\end{align*}
and $H(0) = -1/12$ additionally. In 1975, Zagier \cite{Z2} showed the generating function $\sum_{d \leq 0}H(|d|) q^{-d}$ is the holomorphic part of a certain harmonic Maass form of weight $3/2$ on $\Gamma_0(4)$. Here we put $q := e^{2\pi iz}$ for $z \in \mathfrak{H} := \{z=x+iy\ |\ y>0\}$. (There is a number of studies about this generating function. For example, \cite{BF, HZ, MM} and so on). He also considered the case replacing $1$, the numerator in the definition of $H(|d|)$, with the elliptic modular $j$-function, that is,
\begin{align}\label{tracej}
	\mathrm{Tr}_d(j-744) := \sum_{Q \in \mathcal{Q}_d/\mathrm{SL}_2(\mathbb{Z})}\frac{j(\alpha_Q)-744}{w_Q^{}} \quad \text{for }d<0,
\end{align}
with additional terms $\mathrm{Tr}_0(j-744) := 2, \mathrm{Tr}_{1}(j-744) := -1$. Here $\alpha_Q$ is the unique root of $Q(z,1)=0$ in the upper half plane $\mathfrak{H}$. Then his 2002 paper \cite{Z} asserts that the generating function $\sum_{d \leq 1}\mathrm{Tr}_d(j-744)q^{-d}$ is a weakly holomorphic modular form of weight $3/2$.\\

A simple question is whether one can give a similar result for another $\mathrm{SL}_2(\mathbb{Z})$-invariant function $f$. For a negative integer $d$, let $\mathrm{Tr}_d(f)$ be the modular trace function replacing $j(\alpha_Q) -744$ with $f(\alpha_Q)$ in (\ref{tracej}). Under this notation, the Kronecker-Hurwitz class number $H(|d|)$ can be written as $H(|d|) = \mathrm{Tr}_d(1)$. For example, we consider the function $f(z) = -\mathrm{log}(y|\eta(z)|^4)$ appearing in Kronecker's first limit formula, where $\eta(z) := q^{1/24}\prod_{n=1}^{\infty}(1-q^n)$ is the Dedekind eta function. This function is not holomorphic, but a sesquiharmonic Maass form of weight $0$, according to Lagarias-Rhoades' recent work \cite{LR}. There are a lot of famous works on the trace $\mathrm{Tr}_d ( -\log(y |\eta(z)|^4))$, for example, Lerch, Chowla-Selberg, Deninger, reviewed in \cite[(40), (64)]{DIT5}. As an analogue of Zagier's results, does the generating function
\begin{align}\label{KLF-gen}
	\sum_{d < 0} \mathrm{Tr}_d (-\log(y|\eta(z)|^4)) q^{-d}
\end{align}
have a suitable modularity? One of our goals is to realize such modularity results for any polyharmonic weak Maass form $f$ of weight $0$.\\

To make this precise, we first recall what polyharmonic Maass forms are. Poyharmonic Maass forms are a generalization of harmonic Maass forms. They are annihilated by a number of iterations of the action of the hyperbolic Laplace operator $\Delta_k$. An example of such forms of depth $3/2$ is studied in Duke-Imamo\={g}lu-T\'{o}th's paper \cite{DIT2}, and generally defined as sesquiharmonic Maass forms (``sesqui" means $3/2$) by Bringmann-Diamantis-Raum \cite{BDR}. More generally, Lagarias-Rhoades \cite{LR} introduced polyharmonic Maass forms to study the roles of the higher Laurent coefficients of the real-analytic Eisenstein series. Furthermore polyharmonic weak Maass forms, which are allowed the exponential growth at cusps, of integral weight were studied in our previous paper \cite{M}. Related works on polyharmonic Maass forms are \cite{AAS, ALR, BFI, JKK4} and so on. In this article, we are also concerned with the case of half-integral weight.\\

Throughout this article, for a fixed $k \in \frac{1}{2}\mathbb{Z}$, let $\Gamma = \mathrm{SL}_2(\mathbb{Z})$ if $k \in \mathbb{Z}$, and $\Gamma = \Gamma_0(4)$ if $k \in \mathbb{Z}+1/2$. Then for any $\gamma = [\begin{smallmatrix}a & b \\c & d \end{smallmatrix}] \in \Gamma$, set
\begin{align*}
	j_k(\gamma,z) := \left\{\begin{array}{ll}
		\sqrt{cz+d} \quad &\text{if } k \in \mathbb{Z},\\
		\bigl(\frac{c}{d}\bigr)\epsilon_d^{-1}\sqrt{cz+d}\quad &\text{if } k \in \mathbb{Z}+1/2,
	\end{array} \right.
\end{align*}
where $\sqrt{z}$ is the principal branch of the holomorphic square root, $\bigl(\frac{\cdot}{\cdot}\bigr)$ is the Kronecker symbol, and $\epsilon_d = 1$ if $d \equiv 1 \pmod{4}$ and $\epsilon_d = i$ if $d \equiv 3 \pmod{4}$. Then we define the weight $k$ slash operator by $(f|_k\gamma)(z) = j_k(\gamma,z)^{-2k}f(\gamma z)$. For a fixed $r \in \frac{1}{2}\mathbb{Z}_{>0}$, a complex-valued smooth function $f$ on the upper half plane $\mathfrak{H}$ satisfying
\begin{enumerate}
	\item $(f|_k \gamma) (z) = f(z)$ for any $\gamma \in \Gamma$, 
	\item $f$ is annihilated by the repeating action of the $\xi$-operator $\xi_k := 2i y^k \overline{\frac{\partial}{\partial \bar{z}}}$, $2r$ times, that is,
		\begin{align*}
			\xi_a \circ \cdots \circ \xi_k \circ \xi_{2-k} \circ \xi_k f(z) =0,\quad \text{($2r$ times)},
		\end{align*}
	where $a = k$ if $2r$ is odd, and $a = 2-k$ if $2r$ is even,
	\item there exists an $\alpha \in \mathbb{R}_{>0}$ such that $f(x+iy) = O(y^{\alpha})$ as $y \to \infty$, uniformly in $x \in \mathbb{R}$,
\end{enumerate}
is called a polyharmonic Maass form of weight $k$ and depth $r$. 

\begin{rmk*}
	This definition of depth may look a little strange. We recall that harmonic Maass forms are annihilated by the action of the hyperbolic Laplace operator
	\[
		\Delta_k := -y^2\biggl(\frac{\partial^2}{\partial x^2} + \frac{\partial^2}{\partial y^2}\biggr) +iky\biggl(\frac{\partial}{\partial x} + i\frac{\partial}{\partial y}\biggr).
	\]
	Then polyharmonic Maass forms should be naturally characterized by $\Delta_k^r f(z) = 0$. The depth originally means the number of iterations of $\Delta_k$. On the other hand, the $\xi$-operator is the ``half-iteration" of the hyperbolic Laplace operator in the sense of $\Delta_k = -\xi_{2-k} \circ \xi_k$. Therefore we define the depth of polyharmonic Maass forms as above.
\end{rmk*}

The space of all such forms is denoted by $H_k^r$. We easily see that $H_k^r$ is a $\mathbb{C}$-vector space. In particular, an element of $H_k^1$ is a harmonic Maass form, and $H_k^{1/2}$ is the space of holomorphic modular forms. Moreover, we call them sesquiharmonic if $r=3/2$. Further, we let $H_k^{r,!}$ be the larger space of polyharmonic weak Maass forms defined by relaxing the third condition to $f(x+iy) = O(e^{\alpha y})$. Moreover in the half-integral weight case, we write $H_k^{r,+,!}$ for the subspace of $f \in H_k^{r,!}$ satisfying Kohnen's plus-condition, that is, the $n$-th Fourier coefficient $a(n,y)$ of $f \in H_k^{r,!}$ in the form $f(z) = \sum_{n \in \mathbb{Z}}a(n,y)e^{2\pi inx}$ vanishes unless $(-1)^{k-1/2}n \equiv 0, 1 (\text{mod }4)$. Then it can be easily checked that if $f \in H_k^{r,+,!}$ has polynomial (exponential) growth at $i \infty$, then it also has at most polynomial (exponential) growth at other cusps $0, 1/2$. Throughout this article, we always assume Kohnen's plus-condition. Hence we drop the notation ``+", and write $H_k^{r,!}$ instead of $H_k^{r,+,!}$ simply.\\

First we construct a basis for this space $H_k^{r,!}$ as an analogue of the author's earlier work \cite{M} by using the Maass-Poincar\'{e} series $P_{k,m}(z,s)$ of half-integral weight $k \in \mathbb{Z} + 1/2$ (defined in Section \ref{s3}). We consider its Laurent expansion
\begin{align*}
	P_{k,m}(z,s) = \left\{\begin{array}{ll}
		\sum_{r \in \mathbb{Z}} F_{k,m,r}(z)\bigl(s+\frac{k}{2}-1\bigr)^r \quad &\text{if } k \leq 1/2, \\
\ \\
		\sum_{r \in \mathbb{Z}} G_{k,m,r}(z)\bigl(s-\frac{k}{2}\bigr)^r \quad &\text{if } k \geq 3/2.
	\end{array} \right.
\end{align*}
Since this series $P_{k,m}(z,s)$ is holomorphic in $\mathrm{Re}(s)>1$, the coefficients satisfy $F_{k,m,r}(z) = G_{k,m,r}(z) = 0$ for $k \neq 1/2, 3/2$ and $r < 0$. As for $k = 1/2$ and $k = 3/2$, Duke-Imamo\={g}lu-T\'{o}th \cite[Lemma 3]{DIT2} and Jeon-Kang-Kim \cite[Section 5]{JKK} considered the analytic continuation to $s = 3/4$, and showed the coefficients also satisfy $F_{k,m,r}(z) =G_{k,m,r}(z) = 0$ if $r< 0$ except for $F_{1/2,n^2,-1}(z) \neq 0$ with $n \geq 0$. For the space $H_k^{1/2,!} = M_k^{+,!}$ of weakly holomorphic modular forms, Duke-Jenkins \cite{DJ} constructed a standard basis $\{f_{k,m}(z)\ |\ m \geq -A_k, (-1)^{\lambda_k-1}m \equiv 0,1(4)\}$. This function $f_{k,m}(z)$ has the Fourier expansion $f_{k,m}(z) = q^{-m} + \sum_{n > A_k}a_k(m,n)q^n$. (For more details, see Section \ref{s5}). More generally, we construct a basis for the space $H_k^{r,!}$ of polyharmonic weak Maass forms of half-integral weight in terms of these Laurent coefficients $F_{k,m,r}(z)$ and $G_{k,m,r}(z)$ as follows.

\begin{thm}\label{Main2}
Let $r \geq 1$ be an integer and $k = \lambda_k + 1/2$ with $\lambda_k \in \mathbb{Z}$. We take the decompositions $H_k^{r,!} = H_k^{r-1/2,!} \oplus \mathcal{H}_k^{r,!}$ and $H_k^{r-1/2,!} = H_k^{r-1,!} \oplus \mathcal{H}_k^{r-1/2,!}$ as $\mathbb{C}$-vector spaces. Then we have $\mathcal{H}_k^{0,!} = \{0\}$ and
\begin{enumerate}
	\item For $k \leq -1/2$,
		\begin{enumerate}
			\item $\{F_{k,m,r-1}(z)\ |\ m > A_k, (-1)^{\lambda_k}m \equiv 0,1(4) \}$ is a basis for $\mathcal{H}_k^{r,!}$.
			\item $\{\tilde{F}_{k,m,r-1}(z)\ |\ m \leq A_k, (-1)^{\lambda_k}m \equiv 0,1(4) \}$ is a basis for $\mathcal{H}_k^{r-1/2,!}$.
		\end{enumerate}
	\item For $k = 1/2$,
		\begin{enumerate}
			\item $\{F_{1/2,m,r-1}(z) - 8\sqrt{m}\delta_{\square}(m)F_{1/2,0,r-1}(z)\ |\ 0 < m \equiv 0,1(4) \}$ is a basis for $\mathcal{H}_{1/2}^{r,!}$.
			\item $\{F_{1/2,m,r-1}(z)\ |\ 0>m \equiv 0,1(4) \} \cup \{F_{1/2,0,r-2}(z)\}$ is a basis for $\mathcal{H}_{1/2}^{r-1/2,!}$.
		\end{enumerate}
	\item For $k = 3/2$,
		\begin{enumerate}
			\item $\{G_{3/2,m,r}(z)\ |\ 0<m \equiv 0,3(4)\}\cup\{G_{3/2,0,r-1}(z)\}$ is a basis for $\mathcal{H}_{3/2}^{r,!}$.
			\item $\{G_{3/2,m,r-1}(z) - \frac{4}{\sqrt{\pi}}\delta_{\square}(-m)G_{3/2,0,r-1}(z)\ |\ 0>m \equiv 0,3(4)\}$ is a basis for $\mathcal{H}_{3/2}^{r-1/2,!}$.
		\end{enumerate}
	\item For $k \geq 5/2$,
		\begin{enumerate}
			\item $\{\tilde{G}_{k,m,r}(z)\ |\ m > A_k , (-1)^{\lambda_k}m \equiv 0, 1 (4)\}$ is a basis for $\mathcal{H}_k^{r,!}$.
			\item $\{ G_{k,m,r-1}(z)\ |\ m \leq A_k, (-1)^{\lambda_k}m \equiv 0, 1 (4)\}$ is a basis for $\mathcal{H}_k^{r-1/2,!}$.
		\end{enumerate}
\end{enumerate}
Here we put
\begin{align*}
	\tilde{F}_{k,m,r-1}(z) &:= F_{k,m,r-1}(z) +\sum_{\substack{A_k < n < 0\\(-1)^{\lambda_k}n \equiv 0,1(4)}}a_k(-m,n)F_{k,n,r-1}(z),\\
	\tilde{G}_{k,m,r}(z) &:= m^{k-1}G_{k,m,r}(z) - \sum_{\substack{0 < n \leq A_k\\(-1)^{\lambda_k}n \equiv 0,1(4)}}a_k(-n,m) n^{k-1}G_{k,n,r}(z),\\
	\delta_{\square}(m) &:= \left\{\begin{array}{ll}
		1 &\text{if } m \text{ is a square number},\\
		0 &\text{otherwise}.
	\end{array} \right.
\end{align*}
\end{thm}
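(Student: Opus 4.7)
The plan is to proceed by induction on $2r$, mirroring the integral-weight argument of \cite{M}. The base case $r=1$ reduces to the known description of harmonic weak Maass forms: $\mathcal{H}_k^{1/2,!} = M_k^{+,!}$ is spanned by the Duke-Jenkins basis $\{f_{k,m}\}$, while $\mathcal{H}_k^{1,!}$ is covered by the ordinary Maass-Poincar\'{e} series. For the inductive step the central tool is the $\xi$-operator, which maps $H_k^{r,!}$ into $H_{2-k}^{r-1/2,!}$ and drops the polyharmonic depth by exactly $1/2$. Differentiating the Laurent expansion of $P_{k,m}(z,s)$ in $s$ then yields recursions expressing $\xi_k F_{k,m,r-1}$ and $\xi_k G_{k,m,r}$ as Laurent coefficients of the conjugate Poincar\'{e} series of weight $2-k$ with depth index dropped by $1/2$. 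The tilde elements $\tilde{F}_{k,m,r-1}$ and $\tilde{G}_{k,m,r}$ are constructed so that, thanks to the Duke-Jenkins duality relating $a_k(-m,n)$ with $a_{2-k}(-n,m)$, their $\xi_k$-images land cleanly in the previously constructed basis of $\mathcal{H}_{2-k}^{r-1/2,!}$.

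\textbf{Independence and spanning.} Linear independence reduces to examining principal parts at $i\infty$: the Maass-Poincar\'{e} construction guarantees that $F_{k,m,r-1}$ (respectively $G_{k,m,r}$) carries a distinguished principal term of order $q^{-m}$ multiplied by a polynomial in $\log y$ whose degree matches $r$, so that the tilde elements yield a triangular principal-part system indexed by the admissible $m$. For spanning, given $f \in H_k^{r,!}$, I subtract an appropriate linear combination of basis elements to cancel the principal part at $i\infty$; by Kohnen's plus-condition this forces polynomial growth also at the cusps $0$ and $1/2$, and the iterated $\xi_k$-image of the resulting form has strictly lower polyharmonic depth, placing it in $H_k^{r-1/2,!}$. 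The induction then closes.

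\textbf{Main obstacle.} The hard part is the exceptional weights $k = 1/2$ and $k = 3/2$, where $P_{k,m}(z,s)$ has a pole at $s = 3/4$ and the Laurent coefficient $F_{1/2, n^2, -1}$ (and its $k = 3/2$ counterpart) fails to vanish. This forces the extra basis elements $F_{1/2,0,r-2}$ and $G_{3/2,0,r-1}$ to be inserted by hand, while the correction terms $8\sqrt{m}\,\delta_{\square}(m)F_{1/2,0,r-1}$ and $\frac{4}{\sqrt{\pi}}\delta_{\square}(-m)G_{3/2,0,r-1}$ compensate for the anomalous principal-part contribution of those Laurent coefficients. Explicitly tracking this contribution against the Laurent expansions of Duke-Imamo\={g}lu-T\'{o}th \cite{DIT2} and Jeon-Kang-Kim \cite{JKK} to pin down the numerical factors $8\sqrt{m}$ and $4/\sqrt{\pi}$ is where the bulk of the case-by-case analysis lies.
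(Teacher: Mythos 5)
Your proposal follows essentially the same route as the paper: both rest on the $\xi$-recursions for the Laurent coefficients of $P_{k,m}(z,s)$, the Duke--Jenkins duality $a_k(m,n)=-a_{2-k}(n,m)$ to design the tilde elements, Fourier--Whittaker principal-part analysis for spanning and independence, and a separate treatment of the poles at $s=3/4$ for $k=1/2,3/2$ with the correction terms taken from Duke--Imamo\={g}lu--T\'{o}th and Jeon--Kang--Kim. The only cosmetic difference is that the paper collapses your induction on $2r$ into a single step by applying $\xi_k\circ\Delta_k^{r-1}$ (resp.\ $\Delta_k^{r-1}$) to land directly on the depth $\le 1$ objects $G_{2-k,-m,0}$ and $f_{k,m}$, whose nonvanishing and independence are already established in Section \ref{s5}.
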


In particular by (\ref{xirec1}) and (\ref{xirec2}) in Section \ref{s4}, we immediately obtain the following corollary.

\begin{cor}
	Let $r \in \frac{1}{2}\mathbb{Z}$. The map $\xi_k: H_k^{r,!} \to H_{2-k}^{r-1/2,!}$ is surjective.
\end{cor}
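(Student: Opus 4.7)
My approach is induction on $r\in\tfrac{1}{2}\mathbb{Z}_{>0}$, combining the explicit basis description of Theorem~\ref{Main2} (together with its integer-weight analogue from \cite{M} when $r$ is a half-integer) with the recursions (\ref{xirec1})--(\ref{xirec2}) describing $\xi_k$ on the Laurent coefficients $F_{k,m,r}(z)$ and $G_{k,m,r}(z)$ of the Maass--Poincar\'e series. The base case $r=1/2$ is immediate: by definition $H_k^{1/2,!}=M_k^{+,!}$ consists of weakly holomorphic plus-space modular forms, on which $\xi_k$ vanishes identically, and the target space $H_{2-k}^{0,!}=\{0\}$ since $\mathcal{H}_{2-k}^{0,!}=\{0\}$.

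For the inductive step, assume that $\xi_k\colon H_k^{r-1/2,!}\to H_{2-k}^{r-1,!}$ is surjective. Given $g\in H_{2-k}^{r-1/2,!}$, use the direct-sum decomposition $H_{2-k}^{r-1/2,!}=H_{2-k}^{r-1,!}\oplus\mathcal{H}_{2-k}^{r-1/2,!}$ to write $g=g_{1}+g_{2}$ with $g_{1}\in H_{2-k}^{r-1,!}$ and $g_{2}\in\mathcal{H}_{2-k}^{r-1/2,!}$. By (\ref{xirec1})--(\ref{xirec2}), the $\xi_k$-images of the basis elements of $\mathcal{H}_k^{r,!}$ listed in Theorem~\ref{Main2} are explicit nonzero scalar multiples of the basis elements of $\mathcal{H}_{2-k}^{r-1/2,!}$ (modulo terms lying in the lower-order piece $H_{2-k}^{r-1,!}$). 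Hence one can find $f_{2}\in\mathcal{H}_k^{r,!}$ such that $\xi_k f_{2}\equiv g_{2}\pmod{H_{2-k}^{r-1,!}}$. Then $g-\xi_k f_{2}\in H_{2-k}^{r-1,!}$, and by the inductive hypothesis there exists $f_{1}\in H_k^{r-1/2,!}$ with $\xi_k f_{1}=g-\xi_k f_{2}$. Taking $f=f_{1}+f_{2}\in H_k^{r,!}$ yields $\xi_k f=g$, completing the induction.

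The step I expect to be the main obstacle is the case-by-case bookkeeping required to verify that $\xi_k$ indeed carries the listed basis of $\mathcal{H}_k^{r,!}$ onto (scalar multiples of) the listed basis of $\mathcal{H}_{2-k}^{r-1/2,!}$. One has to handle the four regimes $k\le -1/2$, $k=1/2$, $k=3/2$, $k\ge 5/2$ separately, and in particular check that the corrective summands in the definitions of $\tilde F_{k,m,r-1}$ and $\tilde G_{k,m,r}$, as well as the exceptional $\delta_{\square}$-terms present when $k=1/2$ or $k=3/2$, match up correctly across the $\xi_k$-duality between weights $k$ and $2-k$. Once the recursions (\ref{xirec1})--(\ref{xirec2}) are established in Section~\ref{s4}, this matching is a direct calculation and the surjectivity follows at once.
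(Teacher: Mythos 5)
Your argument is correct and is essentially the paper's own proof: the paper derives the corollary "immediately" from Theorem \ref{Main2} together with the recursions (\ref{xirec1})--(\ref{xirec2}), which is exactly the matching of bases (up to nonzero scalars and lower-depth corrections) that you organize into an induction on $r$. The case-by-case verification you flag is carried out in Section \ref{s6}, where the $\xi_k$-images of the listed basis elements are computed explicitly.
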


\begin{rmk*}
The surjectivity of $\xi_k: H_k^{1,!} \to H_{2-k}^{1/2,!}$ was shown in \cite[Theorem 3.7]{BF2}. Recently, Jeon-Kang-Kim \cite{JKK2} constructed a basis for $H_k^{1,!}$ for $k \in \frac{1}{2}\mathbb{Z}$.
\end{rmk*}

Next we investigate the Fourier coefficients of these polyharmonic weak Maass forms to our goal. We redefine the set $\mathcal{Q}_d$ for any $d \in \mathbb{Z}$ with $d \equiv 0, 1 \pmod{4}$ by 
\[
	\mathcal{Q}_d = \{ Q(X,Y) = aX^2+bXY+cY^2\ |\ a,b,c \in \mathbb{Z}, b^2-4ac = d\}.
\]
In addition if $d$ is not a square number, we split $\mathcal{Q}_d = \mathcal{Q}_d^+ \sqcup \mathcal{Q}_d^-$ into the subsets of forms with $a>0$ and $a<0$. For each fundamental discriminant $D$ or $D=1$, an integer $d$ with $dD <0$, and a $\mathrm{SL}_2(\mathbb{Z})$-invariant function $f$, we define the twisted trace by
\[
	\mathrm{Tr}_{d,D}(f) := \sum_{Q \in \mathcal{Q}^+_{dD} /\mathrm{SL}_2(\mathbb{Z})} \frac{\chi_D^{}(Q)}{w_Q^{}}f(\alpha_Q),
\]
where $\chi_D^{}$ is a genus character defined by
\begin{align*}
	\chi_D^{}(Q) := \left\{\begin{array}{ll}
		\bigl(\frac{D}{r}\bigr), \quad &(a,b,c,D)=1\text{ and }(r,D)=1\text{ where }Q\text{ represents }r,\\
		0, \quad &(a,b,c,D) >1
	\end{array} \right.
\end{align*}
on $\mathcal{Q}_{dD}/\mathrm{SL}_2(\mathbb{Z})$. For the details, see \cite[Section I-2]{GKZ}. On the other hand, according to Duke-Imamo\={g}lu-T\'{o}th \cite{DIT2}, for a non-square $dD >0$, we define the twisted trace of cycle integrals by
\[
	\mathrm{Tr}_{d,D}(f) := \frac{1}{2\pi}\sum_{Q \in \mathcal{Q}_{dD}/\mathrm{SL}_2(\mathbb{Z})}\chi_D^{}(Q)\int_{\mathrm{SL}_2(\mathbb{Z})_Q \backslash S_Q}f(z)\frac{\sqrt{dD}dz}{Q(z,1)},
\]
where $S_Q$ is the geodesic characterized by the semi-circle $a|z|^2+b\mathrm{Re}(z)+c=0$ oriented counter-clockwise if $a>0$ and clockwise if $a<0$, and $\mathrm{SL}_2(\mathbb{Z})_Q$ is the stabilizer of $Q$. Since the cycle integral is invariant under changing $Q$ to $-Q$ and it holds that $\chi_D(-Q) = \mathrm{sgn}(D)\chi_D(Q)$, we easily see that $\mathrm{Tr}_{d,D}(f) = 0$ for $d<0$ and $D<0$.\\

To our purpose, we take any polyharmonic weak Maass form $f$ of weight $0$ as an input function. By our earlier work \cite{M}, we know that the space $H_0^{r,!}$ of polyharmonic weak Maass forms of weight $0$ and any depth $r$ is generated by the Laurent coefficients of the Niebur-Poincar\'{e} series $G_m(z,s)$ ($m \in \mathbb{Z}$) at $s=1$. We remark that in our paper \cite{M} we considered the Maass-Poincar\'{e} series of weight 0 instead of the Niebur-Poincar\'{e} series, but they are the same up to the gamma function. The precise definition is given in Section \ref{s3.2}. Thus it is enough to consider the functions $f(z) = F_{0,m,r}(z)$, where we take the Laurent expansion
\[
	G_m(z,s) = \sum_{r=-1}^{\infty} F_{0,m,r}(z)(s-1)^r.
\]
Since the function $G_0(z,s)$ is the usual real analytic Eisenstein series $E(z,s)$, we see that $F_{0,0,-1}(z) = 3/\pi$ and $F_{0,0,0}(z) = -(3/\pi) \log(y|\eta(z)|^4) + C$, where $C = (6/\pi) (\gamma - \log 2 - 6\zeta'(2)/\pi^2)$ and $\gamma$ is the Euler constant. As for $m \neq 0$, it is known that $F_{0,m,-1}(z) = 0$. (See \cite[Section 3.2]{DIT}). Then we obtain the following modularity results.

\begin{thm}\label{Main4}
	For the polyharmonic weak Maass form $F_{0,m,r}(z)$ and a fundamental discriminant $D$ or $D=1$, the generating functions
	\begin{align*}
		&\sum_{d \in \mathbb{Z}}\mathrm{Tr}_{d,D}^{\mathrm{add}}(F_{0,m,r})q^d + \sum_{\substack{0<d \equiv 0, 1(4)\\dD = \square}}d^{-1/2}\mathrm{Tr}_{d,D}^{\mathrm{sq}}(F_{0,m,r})q^d + \sum_{\substack{0 < d \equiv 0,1 (4)\\ dD \neq \square}} d^{-1/2}\mathrm{Tr}_{d,D}(F_{0,m,r})q^d,\\
		&\sum_{d \in \mathbb{Z}}\mathrm{Tr}_{d,D}^{\mathrm{add}}(F_{0,m,r})q^d +\sum_{0 > d \equiv 0,1 (4)} \mathrm{Tr}_{d,D}(F_{0,m,r})q^{-d}
	\end{align*}
	are the holomorphic parts of polyharmonic weak Maass forms of weight $1/2$ and $3/2$, respectively. Up to finitely many exceptions, the values $\mathrm{Tr}_{d,D}^{\text{add}}(F_{0,m,r})$ all vanish. These additional numbers $\mathrm{Tr}_{d,D}^{\mathrm{add}}(F_{0,m,r})$ and $\mathrm{Tr}_{d,D}^{\mathrm{sq}}(F_{0,m,r})$ are the appropriate values defined in Section $\ref{s7}$.
\end{thm}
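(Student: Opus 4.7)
The plan is to exhibit explicit polyharmonic weak Maass forms of weights $1/2$ and $3/2$ whose holomorphic parts reproduce the two stated generating functions term by term, and then invoke Theorem \ref{Main2} to recognize these forms inside $H_{1/2}^{r,!}$ and $H_{3/2}^{r,!}$.

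First, for each fundamental discriminant $D$ (or $D=1$) I would introduce genus-character-twisted Maass--Poincar\'e series $P_{k,d}^{D}(z,s)$ of weight $k = 1/2$ and $k = 3/2$, obtained from the kernel of $P_{k,m}(z,s)$ of Section \ref{s3} by summing over $\mathcal{Q}_{dD}/\Gamma_\infty$ with weight $\chi_D^{}(Q)$. By the construction underlying Theorem \ref{Main2}, the Laurent coefficients of $P_{k,d}^{D}(z,s)$ at $s = 3/4$ are members of $H_{k}^{r,!}$, so candidate polyharmonic forms come for free. What remains is to identify their Fourier coefficients with the claimed twisted traces of $F_{0,m,r}$.

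The heart of the argument is the Fourier expansion. Standard unfolding of $P_{k,d}^{D}(z,s)$ expresses its $n$-th Fourier coefficient as a sum over moduli $c$ of half-integral-weight twisted Kloosterman sums times a Whittaker/Bessel transform. A Katok--Sarnak-type identity, as developed by Duke--Imamo\u{g}lu--T\'oth in the sesquiharmonic setting, rewrites this series of Kloosterman sums as a $\chi_D^{}$-weighted sum over $\mathrm{SL}_2(\mathbb{Z})$-classes of binary quadratic forms of discriminant $dD$, with archimedean kernel precisely the weight $0$ Niebur--Poincar\'e series $G_m(z,s)$. The kernel evaluates to $G_m(\alpha_Q,s)/w_Q^{}$ when $dD<0$ (a CM point) and to the cycle integral of $G_m(\cdot,s)$ along $S_Q$ when $dD>0$. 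Now one expands both sides as Laurent series: the left side at $s=3/4$ yields $F_{1/2,d,r-1}(z)$ or $G_{3/2,d,r}(z)$, while the right side at $s=1$ yields $F_{0,m,r}$. Matching coefficients order by order identifies the $n$-th Fourier coefficient on the half-integral side with $\mathrm{Tr}_{d,D}(F_{0,m,r})$ up to an explicit normalization in $d$. Theorem \ref{Main2}(2)--(3) then packages the resulting form as a member of the constructed basis, and reading off its holomorphic part gives the generating functions of the theorem.

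The exceptional terms $\mathrm{Tr}^{\mathrm{sq}}_{d,D}$ and $\mathrm{Tr}^{\mathrm{add}}_{d,D}$ arise from three effects that have to be isolated: (i) square discriminants $dD = \square$ in the weight $1/2$ case produce degenerate geodesics and interact with the anomalous non-vanishing of $F_{1/2,n^2,-1}$ noted before Theorem \ref{Main2}; (ii) the constant-term and low-frequency contributions in the unfolding pick up the principal part $F_{0,0,-1} = 3/\pi$ and the non-holomorphic Eisenstein piece $F_{0,0,0}$; and (iii) the residual terms from $F_{1/2,0,r-1}$ and $G_{3/2,0,r-1}$ predicted by Theorem \ref{Main2}(2)(b) and (3)(a). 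Each of these contributes at only finitely many $d$, so the additional values can be defined pointwise in Section \ref{s7}.

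\textbf{Main obstacle.} The principal technical hurdle is the bookkeeping of the double Laurent expansion across the Katok--Sarnak identity. That identity naturally lives at a single pair of spectral parameters, whereas accessing depth $r$ requires differentiating both sides $r$ times in $s$ and controlling all residues of the Bessel/Whittaker kernel as well as the pole of $G_0(z,s) = E(z,s)$ at $s=1$. Tracking these residues consistently, and reconciling them with the prescription of $\mathrm{Tr}^{\mathrm{add}}_{d,D}$ and $\mathrm{Tr}^{\mathrm{sq}}_{d,D}$, together with the exceptional basis elements $F_{1/2,0,r-1}$ and $G_{3/2,0,r-1}$, is where the bulk of the work lies; everything else is a careful but essentially formal expansion.
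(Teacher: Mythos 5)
Your overall strategy---produce half-integral weight Maass--Poincar\'e-type series whose Fourier coefficients encode the twisted traces of $G_m(z,s)$, then Laurent-expand at the special point and read off holomorphic parts---is the same as the paper's, and your diagnosis of where the additional and square-index terms come from is accurate. But your central construction is not the one the paper uses, and as described it has a concrete problem. The paper does not define a new genus-character-twisted Poincar\'e series summed over $\mathcal{Q}_{dD}/\Gamma_\infty$ and does not reprove any Katok--Sarnak identity by unfolding. It takes the ordinary Maass--Poincar\'e series of Section \ref{s3} and forms the divisor-sum combination $\mathcal{F}_{D,m}(z,s) = B(s)\Gamma(\frac{s+1}{2})\sum_{n|m}\bigl(\frac{D}{n}\bigr)P_{1/2,\,m^2D/n^2}(z,\frac{s}{2}+\frac{1}{4})$ (and an analogous $\mathcal{G}_{D,m}$ in weight $3/2$ via Lemma \ref{bkm}); the symmetry $b_{k,m}(n,s)=b_{k,n}(m,s)$ converts the $d$-th coefficient of this combination into $\sum_{n|m}\bigl(\frac{D}{n}\bigr)b_{1/2,d}(m^2D/n^2,\cdot)$, which is exactly the right-hand side of the already-available identity (\ref{key}) of Duke--Imamo\u{g}lu--T\'oth, i.e.\ $\widetilde{\mathrm{Tr}}_{d,D}(G_m(z,s))$. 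This is what makes the argument short: the hard analytic input is imported, and the work reduces to tracking the normalizations $B(s)$, $\Gamma(\frac{s+1}{2})$, the $L_D(s)$ factor when $m=0$, and the $\varphi$-term and $n=0$ contributions that become $\mathrm{Tr}^{\mathrm{add}}$ and $\mathrm{Tr}^{\mathrm{sq}}$.

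The gap in your version is twofold. First, an object built by summing over $\mathcal{Q}_{dD}/\Gamma_\infty$ is indexed by the fixed discriminant $dD$, so its Fourier coefficients cannot directly run over $d$; you would still need a duality step (precisely the symmetry $b_{k,m}(n,s)=b_{k,n}(m,s)$, or an equivalent) to turn ``the trace attached to a fixed $d$'' into ``the $d$-th coefficient of a single form,'' and you never supply it. Second, for general $m>1$ the trace of $G_m$ is not the coefficient of a single Poincar\'e series but of the divisor sum over $n\mid m$ weighted by $\bigl(\frac{D}{n}\bigr)$ (with M\"obius inversion giving the converse, (\ref{key2})); your plan treats $m$ as if a single twisted series suffices. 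You also omit the $m=0$ case entirely, where the correct normalization involves $2^{s-1}\pi^{-(s+1)/2}|D|^{s/2}L_D(s)$ and the constant term $b_{1/2,0}(0,s)$ of (\ref{Ibu}), and the weight $3/2$ side, where Lemma \ref{bkm} is needed to reduce $b_{3/2,\cdot}$ to $b_{1/2,\cdot}$ before (\ref{key}) applies. These are not cosmetic omissions: without the divisor-sum structure and the coefficient symmetry, the identification of the $d$-th Fourier coefficient with $\mathrm{Tr}_{d,D}(F_{0,m,r})$ does not go through as stated.
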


\begin{rmk*}
	The latter series is identically $0$ for a negative $D<0$.
\end{rmk*}

\begin{rmk*}
	The definition of the ``holomorphic part" of a polyharmonic Maass form depends on the choice of Fourier expansion. In this paper, we use the functions $u_{k,n}^{[j], \pm}(y)$ and define the holomorphic part as Definition \ref{defholpart} in Section \ref{s2}.
\end{rmk*}

By this theorem, we see that the generating function (\ref{KLF-gen}) with suitable additional terms becomes the holomorphic part of a polyharmonic Maass form of weight $3/2$ and depth $2$ on $\Gamma_0(4)$. There is a number of studies on such modularities of trace functions: the case of $f=1, j_m(z) \in H_0^{1/2,!}$ and $dD<0$ is in Zagier \cite{Z2, Z}, $f = 1, j_m(z)$ and $dD>0$ is in Duke-Imamo\={g}lu-T\'{o}th \cite{DIT2}. Here $j_m(z)$ is the unique polynomial in $j(z)$ of the form $q^{-m} + O(q)$. For harmonic weak Maass forms, in more general setting, Alfes-Neumann and Schwagenscheidt \cite{AS1, AS2} considered such generating functions for any $d, D$. Moreover for sesquiharmonic Maass forms $\hat{J}_m$ satisfies $\Delta_0 \hat{J}_m = -j_m - 24\sigma_1(m)$, Jeon-Kang-Kim \cite{JKK4} constructed the generating functions of traces of their cycle integrals. This theorem \ref{Main4} gives a generalization of these results to higher depth.\\

Finally we give one application of our results. As shown in Proposition \ref{Theorem 4.4} in Section \ref{s3}, we have the Fourier expansion of the Maass-Poincar\'{e} series $P_{k,m}(z,s)$. By using (\ref{key2}) the work of Duke-Imamo\={g}lu-T\'{o}th \cite{DIT2}, we see that the holomorphic part of the sesquiharmonic Maass form $F_{1/2,0,0}(z) \in H_{1/2}^{3/2}$ is realized as the generating function of $\mathrm{Tr}_{d,D}(-\log(y|\eta(z)|^4))$ with $D \neq 1$, and also $\mathrm{Tr}_{d,1}(1)$. Precisely we get the following statement.

\begin{cor}\label{MainCor}
	Let $D \neq 1$ be a fundamental discriminant. The $d$-th $(0 < d \neq \square$ and $dD \neq \square)$ Fourier coefficient of the holomorphic part of $F_{1/2,0,0}(z)$ gives the equation
	\begin{align*}
		\mathrm{Tr}_{d,D}(-\mathrm{log}(y|\eta(z)|^4)) &= \sqrt{|D|}L_D(1)\mathrm{Tr}_{d,1}(1),
	\end{align*}
	where $L_D(s) := \sum_{n=1}^{\infty} \big(\frac{D}{n}\big) n^{-s}$ is the Dirichlet $L$-function.
\end{cor}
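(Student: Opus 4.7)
The plan is to prove the corollary by comparing two computations of the $d$-th Fourier coefficient of the holomorphic part of $F_{1/2,0,0}(z)$, using Theorem~\ref{Main4} once with $D\neq 1$ and once with $D=1$. Since $F_{1/2,0,0}(z)$ is by construction the constant-term Laurent coefficient of $P_{1/2,0}(z,s)$ at $s=3/4$, its Fourier expansion, and hence its holomorphic part in the sense of Definition~\ref{defholpart}, is explicitly computable from Proposition~\ref{Theorem 4.4}.

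For the $D\neq 1$ side I would apply Theorem~\ref{Main4} with the weight-$0$ input $F_{0,0,0}(z)=-(3/\pi)\mathrm{log}(y|\eta(z)|^4)+C$, so that for the indices $0<d\neq\square$, $dD\neq\square$, the corresponding $d$-th coefficient of the generating function equals
\[
d^{-1/2}\,\mathrm{Tr}_{d,D}(F_{0,0,0})=(3/\pi)\,d^{-1/2}\,\mathrm{Tr}_{d,D}(-\mathrm{log}(y|\eta(z)|^4))+C\,d^{-1/2}\,\mathrm{Tr}_{d,D}(1),
\]
where the constant-$C$ piece is swept into the additional-term machinery $\mathrm{Tr}_{d,D}^{\mathrm{add}}$ of Theorem~\ref{Main4} and hence disappears from the ordinary-coefficient range under the hypotheses on $(d,D)$. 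For the $D=1$ side I would invoke equation~(\ref{key2}) together with the Duke-Imamo\=glu-T\'oth formula~\cite{DIT2}, which expresses the very same Fourier coefficient of $F_{1/2,0,0}(z)$ as an explicit constant multiple of $d^{-1/2}\mathrm{Tr}_{d,1}(1)$; this constant is where $\sqrt{|D|}L_D(1)$ should enter, coming from the Dirichlet class number formula and the residue at $s=1$ of the $D$-twisted Epstein zeta function that naturally appears when Mellin-expanding the twisted Maass-Poincar\'e kernel against a constant.

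Equating the two expressions for the single coefficient (and cancelling the common factor $d^{-1/2}$ together with the overall $3/\pi$ normalization) produces the claimed identity $\mathrm{Tr}_{d,D}(-\mathrm{log}(y|\eta(z)|^4))=\sqrt{|D|}L_D(1)\mathrm{Tr}_{d,1}(1)$.

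The main obstacle will be the precise tracking of normalization constants between the $D$-twisted and untwisted Maass-Poincar\'e series in the Laurent expansion at $s=3/4$ of $P_{1/2,0}(z,s)$, so that exactly the factor $\sqrt{|D|}L_D(1)$ emerges on the right-hand side rather than some spurious rational or transcendental multiple. A related subtlety is confirming that the constant-$C$ contribution coming from the difference $F_{0,0,0}(z)+(3/\pi)\mathrm{log}(y|\eta(z)|^4)=C$ is wholly absorbed into $\mathrm{Tr}_{d,D}^{\mathrm{add}}$ under the stated restrictions $0<d\neq\square$ and $dD\neq\square$, so that no residual constant term pollutes the clean identity.
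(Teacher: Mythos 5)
Your overall strategy --- evaluate the $d$-th Fourier coefficient of $F_{1/2,0,0}(z)^{\mathrm{hol}}$ once through a fundamental discriminant $D\neq1$ and once through $D=1$, and equate the two answers --- is exactly the paper's strategy; the paper implements it directly on the coefficient $b_{1/2,0}(d,s)$ via (\ref{key2}) and a Laurent expansion at $s=3/4$ rather than through the statement of Theorem \ref{Main4}, but this is the same computation. There is, however, one genuine gap: your disposal of the constant $C$ in $F_{0,0,0}(z)=-(3/\pi)\log(y|\eta(z)|^4)+C$. The term $C\,d^{-1/2}\mathrm{Tr}_{d,D}(1)$ is \emph{not} absorbed into $\mathrm{Tr}_{d,D}^{\mathrm{add}}$: the additional terms are supported only on the finitely many indices $d=0$ and $d$ with $dD=\square$, whereas for $0<d$ with $dD\neq\square$ the coefficient produced by Theorem \ref{Main4} is genuinely $d^{-1/2}\mathrm{Tr}_{d,D}(F_{0,0,0})=d^{-1/2}\bigl(\tfrac{3}{\pi}\mathrm{Tr}_{d,D}(-\log(y|\eta(z)|^4))+C\,\mathrm{Tr}_{d,D}(1)\bigr)$, an ordinary cycle-integral trace. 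What actually kills the $C$-term is the separate vanishing $\mathrm{Tr}_{d,D}(1)=0$ for $D\neq1$ and $d\neq\square$, which the paper establishes beforehand as (\ref{twtrace}) by running the very same comparison at the level of the $(-1)$-st Laurent coefficient (input $F_{0,0,-1}=3/\pi$). Your proof needs this as a prior step; without it the claimed identity carries a spurious multiple of $\mathrm{Tr}_{d,D}(1)$ on the left-hand side.

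Two smaller corrections of attribution. First, the factor $\sqrt{|D|}L_D(1)$ does not arise on the $D=1$ side from a class number formula; it is simply the value at $s=1$ of the normalizing prefactor $|D|^{s/2}L_D(s)$ appearing in the $m=0$ case of (\ref{key}) and (\ref{key2}) on the $D\neq1$ side. Second, the mechanism that reduces the $D=1$ side to a multiple of $\mathrm{Tr}_{d,1}(1)$ is the cancellation between the simple zero of $\zeta(2s-1/2)^{-1}$ and the simple pole of $G_0(z,2s-1/2)=E(z,2s-1/2)$ at $s=3/4$, so that only the residue $3/(2\pi)$ of the Eisenstein series survives in the constant Laurent coefficient and the $\log(y|\eta(z)|^4)$ and $C$ terms drop out there automatically; you should state this, and also note that the contamination of the relevant Laurent coefficients by $F_{1/2,0,-1}(z)=\tfrac{3}{4\pi}\theta(z)$ affects only square indices, which is precisely where the hypothesis $d\neq\square$ enters.
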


For $D<0$, the left-hand side is equal to
\[
	\sum_{Q \in \mathcal{Q}^+_{dD} /\mathrm{SL}_2(\mathbb{Z})} -\frac{\chi_D^{}(Q)}{w_Q^{}}\mathrm{log}(\mathrm{Im}(\alpha_Q)|\eta(\alpha_Q)|^4),
\]
and if $d$ is also a fundamental discriminant, the right-hand side is given by
\begin{align}\label{kronecker}
	\sqrt{|D|}L_D(1)\frac{1}{2\pi}\sum_{Q \in \mathcal{Q}_{d} / \mathrm{SL}_2(\mathbb{Z})}\int_{\mathrm{SL}_2(\mathbb{Z})_Q \backslash S_Q}\frac{\sqrt{d}dz}{Q(z,1)} = \sqrt{|D|}L_D(1)\frac{h(d)\mathrm{log}\varepsilon_d}{\pi},
\end{align}
where $h(d)$ is the narrow class number of $\mathbb{Q}(\sqrt{d})$ and $\varepsilon_d$ is the smallest unit $>1$ with positive norm. This equation (\ref{kronecker}) was originally established by Kronecker \cite[(41)]{DIT5}. (As a good reference, Duke-Imamo\={g}lu-T\'{o}th \cite{DIT5} reviewed the history of Kronecker's limit formula). As for a fundamental discriminant $d>0$ and $D>0$ with $dD \neq \square$, we have
\[
	\sum_{Q \in \mathcal{Q}_{dD}/\mathrm{SL}_2(\mathbb{Z})}\chi_D(Q)\int_{\mathrm{SL}_2(\mathbb{Z})_Q \backslash S_Q}-\mathrm{log}(y|\eta(z)|^4)\frac{\sqrt{dD}dz}{Q(z,1)} = 2\sqrt{D}L_D(1)h(d)\mathrm{log}\varepsilon_d.
\]
This formula was written down by Siegel \cite[(65)]{DIT5}. While Kronecker and Siegel proved these equations only for a fundamental discriminant $d$ that is coprime to $D$, Corollary \ref{MainCor} holds more generally. As an example, in the case of $d=8$ and $D=-4$, since $\mathcal{Q}_{-32}^+ / \mathrm{SL}_2(\mathbb{Z}) = \{ X^2 + 8Y^2, 3X^2 + 2XY + 3Y^2, 2X^2 + 4Y^2\}$, we have
\[
	\log \frac{1}{3} \bigg|\frac{\eta\bigl(\frac{-1+2\sqrt{2} i}{3}\bigr)}{\eta(2\sqrt{2}i)}\bigg|^4 = 2 \cdot \frac{\pi}{4} \cdot \frac{\log (3+2\sqrt{2})}{\pi} = \log(1+\sqrt{2}),
\]
that is,
\[
	\bigg|\frac{\eta\bigl(\frac{-1+2\sqrt{2} i}{3}\bigr)}{\eta(2\sqrt{2}i)}\bigg|^4 = 3+3\sqrt{2}.
\]\\


The paper is organized as follows. First, in Section \ref{s2}, we give basic properties of Whittaker functions. After that, we review some results on the Maass-Poincar\'{e} series and the Niebur-Poincar\'{e} series. Through Section \ref{s4} to Section \ref{s6}, we give a proof of Theorem \ref{Main2}.  In Section \ref{s7} we prove Theorem \ref{Main4} divided into $6$ parts, and give the definitions of the additional terms. Finally we write down a proof of Corollary \ref{MainCor}.


\begin{ack}
The author would like to thank Masanobu Kaneko, Soon-Yi Kang, Chang Heon Kim, and Markus Schwagenscheidt for their helpful comments. He also thanks the referees for their comments on a preliminary version of this paper. This work is supported by Research Fellow (DC) of Japan Society for the Promotion of Science, Grant-in-Aid for JSPS Fellows 18J20590 and JSPS Overseas Challenge Program for Young Researchers.
\end{ack}


\section{The Whittaker functions}\label{s2}

In this section, we recall some basic properties of Whittaker functions based on \cite[9.22--9.23]{GR}, \cite[Chapter VII]{MOS}. For two parameters $\mu, \nu \in \mathbb{C}$, Whittaker functions $M_{\mu,\nu}(z)$ and $W_{\mu,\nu}(z)$ are the standard solutions to the Whittaker differential equation
\begin{align}\label{WDE}
	\frac{d^2w}{dz^2} + \biggl(-\frac{1}{4}+\frac{\mu}{z}+\frac{1-4\nu^2}{4z^2}\biggr)w=0.
\end{align}
If these parameters $\mu, \nu$ satisfy $\mathrm{Re}(\nu \pm \mu +1/2) >0$ and $y >0$, then Whittaker functions are represented by
\begin{align*}
	M_{\mu,\nu}(y) &= y^{\nu+\frac{1}{2}}e^{\frac{y}{2}}\frac{\Gamma(1+2\nu)}{\Gamma(\nu+\mu+\frac{1}{2})\Gamma(\nu-\mu+\frac{1}{2})}\int_0^1 t^{\nu+\mu-\frac{1}{2}}(1-t)^{\nu-\mu-\frac{1}{2}}e^{-yt}dt,\\
	W_{\mu,\nu}(y) &= y^{\nu+\frac{1}{2}}e^{\frac{y}{2}}\frac{1}{\Gamma(\nu-\mu+\frac{1}{2})}\int_1^{\infty}t^{\nu+\mu-\frac{1}{2}}(t-1)^{\nu-\mu-\frac{1}{2}}e^{-yt}dt.
\end{align*}
Moreover, we define a modified version of the Whittaker function $\mathcal{M}_{\mu,\nu}^+(z) := W_{-\mu,\nu}(ze^{\pi i})$ according to the paper \cite{ALR}. This function $\mathcal{M}_{\mu,\nu}^+(z)$ is also a solution of (\ref{WDE}), and always linearly independent to $W_{\mu,\nu}(z)$. Thus the Whittaker differential equation (\ref{WDE}) has two linearly independent solutions $W_{\mu,\nu}(z)$ and $\mathcal{M}_{\mu,\nu}^+(z)$.\\

We now explain the Fourier expansion of polyharmonic weak Maass forms. First we consider the integral depth case. Since any $f \in H_k^{r,!}$ satisfies the modular transformation law, we have $f(z+1) = f(z)$, that is, $f(z)$ has the following Fourier expansion
\[
	f(z) = \sum_{n \in \mathbb{Z}}a(n,y)e^{2\pi inx}.
\]
By the second condition for the definition of polyharmonic Maass forms, it holds that $\Delta_k^rf(z) =0$, where $\Delta_k$ is the hyperbolic Laplacian defined by
\[
	\Delta_k := -y^2\biggl(\frac{\partial^2}{\partial x^2} + \frac{\partial^2}{\partial y^2}\biggr) +iky\biggl(\frac{\partial}{\partial x} + i\frac{\partial}{\partial y}\biggr) = -\xi_{2-k} \circ \xi_k.
\]
Then each coefficient $a(n,y)$ satisfies a certain $2r$-order linear differential equation. For $k \neq 1$, Andersen-Lagarias-Rhoades \cite{ALR} gave $2r$ linear independent solutions by
\begin{align*}
	u_{k,n}^{[j],-}(y) &:= y^{-\frac{k}{2}}\frac{\partial^j}{\partial s^j}W_{\mathrm{sgn}(n)\frac{k}{2}, s-\frac{1}{2}}(4\pi |n|y)\bigg|_{s=\frac{k}{2}},\\
	u_{k,n}^{[j],+}(y) &:= y^{-\frac{k}{2}}\frac{\partial^j}{\partial s^j}\mathcal{M}^+_{\mathrm{sgn}(n)\frac{k}{2}, s-\frac{1}{2}}(4\pi |n|y)\bigg|_{s=\frac{k}{2}},
\end{align*}
for $0 \leq j \leq r-1$ if $n \neq 0$, and for $n =0$,
\begin{align*}
	u_{k,0}^{[j],-}(y) &:= \frac{\partial^j}{\partial s^j}y^{1-\frac{k}{2}-s}\bigg|_{s=\frac{k}{2}} = (-1)^j(\mathrm{log}\ y)^j y^{1-k},\\
	u_{k,0}^{[j],+}(y) &:= \frac{\partial^j}{\partial s^j}y^{s-\frac{k}{2}}\bigg|_{s=\frac{k}{2}} = (\mathrm{log}\ y)^j.
\end{align*}
In the special case of $j=0$, we can express these functions by simple functions,
\begin{align}
	\begin{split}\label{useful1}
		u_{k,n}^{[0],-}(y)e^{2\pi inx} &= y^{-\frac{k}{2}}W_{\mathrm{sgn}(n)\frac{k}{2}, \frac{k-1}{2}}(4\pi |n|y)e^{2\pi inx}\\
		&= \left\{\begin{array}{ll}
			(4\pi n)^{\frac{k}{2}}q^n\quad &\text{if } n > 0, \\
			(4\pi |n|)^{\frac{k}{2}}\Gamma(1-k,4\pi |n|y)q^n\quad &\text{if } n<0,
		\end{array} \right.
	\end{split}\\
	\begin{split}\label{useful2}
		u_{k,n}^{[0],+}(y)e^{2\pi inx} &= y^{-\frac{k}{2}}\mathcal{M}^+_{-\frac{k}{2}, \frac{k-1}{2}}(4\pi |n|y)e^{2\pi inx}\\
		&= (4\pi |n| e^{\pi i})^{\frac{k}{2}} q^n,\hspace{105pt} \text{if } n<0,
	\end{split}
\end{align}
where $\Gamma(s,y) := \int_y^{\infty} e^{-t}t^{s-1}dt$ is the incomplete Gamma function. Moreover by the below Lemma \ref{xiop}, the function $u_{k,n}^{[0],+}(y)e^{2\pi inx}$ with $n>0$ is not holomorphic. Hence we have

\begin{prp}\label{FWE}\cite[Section 3]{ALR}
	Let $f(z) \in H_k^{r,!}$ for $k \in \frac{1}{2}\mathbb{Z}$ and $r \in \mathbb{Z}_{>0}$. Then the Fourier-Whittaker expansion of $f(z)$ is given by
\begin{align*}
f(z) = \sum_{n \in \mathbb{Z}} \sum_{j=0}^{r-1} \biggl(c_{n,j}^- u_{k,n}^{[j],-}(y)e^{2\pi i n x} + c_{n,j}^+ u_{k,n}^{[j],+}(y)e^{2\pi i n x} \biggr),
\end{align*}
where $c_{n,j}^{\pm} \in \mathbb{C}$. If $k \in \mathbb{Z} + 1/2$, then it is required to satisfiy Kohnen's plus-condition.
\end{prp}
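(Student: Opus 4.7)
The plan is to derive the expansion in two stages: first convert the polyharmonicity hypothesis into an ODE for each Fourier coefficient, then exhibit $2r$ explicit linearly independent solutions via a spectral-parameter argument.

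Since the translation $T = \bigl(\begin{smallmatrix}1 & 1 \\ 0 & 1\end{smallmatrix}\bigr)$ lies in $\Gamma$ with $j_k(T,z) = 1$ in both the integral and half-integral cases, modularity gives $f(z+1) = f(z)$, so $f(z) = \sum_{n} a(n,y) e^{2\pi i n x}$ (with Kohnen's plus-condition restricting the admissible $n$ when $k \in \mathbb{Z}+1/2$). A direct computation using
\[
\Delta_k\bigl(a(y) e^{2\pi i n x}\bigr) = \bigl[-y^2 a''(y) - k y a'(y) + (4\pi^2 n^2 y^2 - 2\pi k n y) a(y)\bigr] e^{2\pi i n x}
\]
shows that $\Delta_k$ preserves Fourier modes, so orthogonality in $x$ reduces $\Delta_k^r f = 0$ to the assertion that each $a(n,y)$ solves the $n$-th mode ODE $L_{k,n}^r a = 0$ of order $2r$, where $L_{k,n}$ is the bracketed differential operator in $y$.

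For $n \neq 0$ I would apply the classical spectral-derivative trick. Introduce the families
\[
v_{s,n}^{-}(y) := y^{-k/2} W_{\mathrm{sgn}(n)\frac{k}{2},\, s-\frac{1}{2}}(4\pi |n| y), \qquad v_{s,n}^{+}(y) := y^{-k/2} \mathcal{M}^{+}_{\mathrm{sgn}(n)\frac{k}{2},\, s-\frac{1}{2}}(4\pi |n| y);
\]
both solve the Whittaker equation (\ref{WDE}), so after the substitution $z = 4\pi|n|y$ one checks that each is an eigenfunction, $L_{k,n} v_{s,n}^{\pm} = \lambda(s)\, v_{s,n}^{\pm}$, with $\lambda(s) = (s-k/2)(1 - k/2 - s)$. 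For $k \neq 1$ this has a simple zero at $s = k/2$ (since $\lambda'(k/2) = 1 - k \neq 0$), hence $\lambda(s)^r$ vanishes to order exactly $r$ there. Applying $\partial_s^j$ to the identity $L_{k,n}^r v_{s,n}^{\pm} = \lambda(s)^r v_{s,n}^{\pm}$ and invoking the Leibniz rule, every term vanishes at $s = k/2$ when $0 \leq j \leq r-1$, so $u_{k,n}^{[j],\pm}(y) = \partial_s^j v_{s,n}^{\pm}\big|_{s=k/2}$ lies in $\ker L_{k,n}^r$. Linear independence of these $2r$ functions follows from their behavior as $y \to \infty$: the $W$-family decays like $e^{-2\pi|n|y}$ while the $\mathcal{M}^{+}$-family grows like $e^{2\pi|n|y}$, and within each sign the $j$-th $s$-derivative contributes a leading factor of $(\log y)^j$. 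Since the ODE has order $2r$, these span its solution space.

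For $n = 0$ the mode-ODE degenerates to the Euler-type equation $L_{k,0} a = -y^2 a'' - k y a' = 0$, whose indicial roots $0$ and $1-k$ yield the $r$-fold iterated kernel spanned by $\{(\log y)^j,\ y^{1-k}(\log y)^j : 0 \leq j \leq r-1\}$ (again under $k \neq 1$), matching the listed $u_{k,0}^{[j],\pm}$. Collecting, for every $n$ the coefficient $a(n,y)$ is uniquely a linear combination of the $2r$ basis functions $u_{k,n}^{[j],\pm}$, defining the complex constants $c_{n,j}^{\pm}$; the weak growth hypothesis (3) together with standard Whittaker asymptotics ensures the resulting Fourier-Whittaker series converges back to $f$. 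The main technical point is verifying the eigenvalue identity $L_{k,n} v_{s,n}^{\pm} = \lambda(s)\, v_{s,n}^{\pm}$ cleanly from the Whittaker differential equation and tracking the constants introduced by the scaling $z = 4\pi|n|y$; the whole argument degenerates in the boundary case $k = 1$ where $\lambda$ acquires a double zero, but this lies outside the half-integral regime that is the paper's main concern.
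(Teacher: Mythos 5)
Your proposal is correct and follows essentially the same route as the paper, which itself only sketches this argument and defers the details to Andersen--Lagarias--Rhoades \cite[Section 3 and Appendix A]{ALR}: translation-invariance gives the Fourier expansion, $\Delta_k^r f = 0$ reduces to a $2r$-th order ODE in $y$ for each mode, and the functions $u_{k,n}^{[j],\pm}$ arise as the Jordan chain obtained by differentiating the Whittaker eigenfunction family in the spectral parameter at the simple zero of $\lambda(s)=(s-k/2)(1-k/2-s)$. The only points worth tightening are that linear independence of the $s$-derivatives is most cleanly read off from the asymptotics as $y\to 0$ (or from the standard generalized-eigenvector argument), since the leading $y\to\infty$ asymptotic of $W_{\mu,\nu}$ is independent of $\nu$, and that no convergence issue actually arises: the Fourier series of the smooth periodic function $f$ converges automatically, so the growth hypothesis is not needed for the expansion itself.
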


Finally, we combine Proposition \ref{FWE} with the condition on the behavior at cusps. Corollary A.3 in \cite{ALR} asserts that $u_{k,n}^{[j],+}(y)$ grows exponentially as $y \to \infty$, while $u_{k,n}^{[j],-}(y)$ decays exponentially as $y \to \infty$ for $n \neq 0$. By the growth condition at $i \infty$, the Fourier coefficients $c_{n,j}^+ = 0$ for almost all indices $(n,j)$. If all coefficients $c_{n,j}^+ =0$ for $n \neq 0$, then $f \in H_k^{r}$ strictly. In order to consider the case of half-integral depth, we recall the following lemma.

\begin{lmm}\cite[Lemma 2.2]{M}\label{xiop}
	Under the above notations, we have
	\begin{align*}
		&\quad\xi_k(u_{k,n}^{[j],-}(y)e^{2\pi i nx})\\
		&\quad\quad=\left\{\begin{array}{ll}
			j(1-k)u_{2-k,-n}^{[j-1],-}(y)e^{-2\pi i nx} - j(j-1)u_{2-k,-n}^{[j-2],-}(y)e^{-2\pi inx} \quad \text{if } n > 0, \\
			-u_{2-k,-n}^{[j],-}(y)e^{-2\pi inx} \quad \text{if } n < 0,
		\end{array} \right.\\
		&\quad\xi_k(u_{k,n}^{[j],+}(y)e^{2\pi i nx})\\
		&\quad\quad = \left\{\begin{array}{ll}
			-u_{2-k,-n}^{[j],+}(y)e^{-2\pi inx} \quad \text{if } n > 0,\\
			j(1-k)u_{2-k,-n}^{[j-1],+}(y)e^{-2\pi i nx} - j(j-1)u_{2-k,-n}^{[j-2],+}(y)e^{-2\pi inx} \quad \text{if } n < 0,
	\end{array} \right.\\
		&\quad\xi_k(u_{k,0}^{[j],-}(y)) = (-1)^j \biggl( j u_{2-k,0}^{[j-1],+}(y) + (1-k) u_{2-k,0}^{[j],+}(y) \biggr),\\
		&\quad\xi_k(u_{k,0}^{[j],+}(y)) = (-1)^{j-1} j u_{2-k,0}^{[j-1],-}(y),
\end{align*}
where we put $u_{k,n}^{[j],\pm}(y) = 0$ for any $j<0$.
\end{lmm}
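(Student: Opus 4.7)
The plan is to reduce $\xi_k$ to a one-variable ODE computation on each Fourier mode's $y$-profile and then invoke standard Whittaker contiguous/derivative identities. A direct calculation from $\xi_k = 2iy^k\,\overline{\partial/\partial\bar z}$ with $\partial/\partial\bar z = \tfrac12(\partial_x + i\partial_y)$ shows that for any smooth real-valued $g(y)$ and any $n \in \mathbb{R}$,
\begin{align*}
\xi_k\bigl(g(y)e^{2\pi inx}\bigr) = y^k\bigl(g'(y)+2\pi n\,g(y)\bigr)e^{-2\pi inx}.
\end{align*}
This reduces each case of the lemma to identifying $y^k(g'+2\pi n g)$ with a linear combination of the basis $y$-profiles $u_{2-k,-n}^{[i],\pm}(y)$.

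For $n \neq 0$ I would first carry out the base case $j=0$ while keeping the parameter $s$ free. Setting $\mu := \mathrm{sgn}(n)\,k/2$ and $G(y,s) := y^{-k/2}W_{\mu,\,s-1/2}(4\pi|n|y)$, the standard Whittaker contiguous/derivative identities from \cite[Chapter VII]{MOS} (for instance $zW'_{\kappa,\nu}(z) = (\kappa - z/2)W_{\kappa,\nu}(z) - (\nu^2 - (\kappa-\tfrac12)^2)\,W_{\kappa-1,\nu}(z)$, together with a $\kappa \mapsto -\kappa$ contiguous relation) express
\begin{align*}
y^k\bigl(G_y + 2\pi n\,G\bigr) = A(s,n)\,y^{-(2-k)/2}\,W_{-\mu,\,s-1/2}(4\pi|n|y)
\end{align*}
for an explicit polynomial $A(s,n)$ of degree at most two in $s$. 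Using the symmetry $W_{\mu,\nu}(z)=W_{\mu,-\nu}(z)$ I would replace $s-1/2$ by $(1-s)-1/2$ so the Whittaker factor matches the weight-$(2-k)$ template expanded at base point $s_0' = (2-k)/2$. The analogous computation is done for $\mathcal{M}^+_{\mu,s-1/2}$ via $\mathcal{M}^+_{\mu,\nu}(z)=W_{-\mu,\nu}(ze^{i\pi})$.

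Next I would differentiate $j$ times in $s$ via Leibniz and evaluate at $s = k/2$. Because $A(s,n)$ has degree $\leq 2$ in $s$, only the $j$-th, $(j{-}1)$-th, and $(j{-}2)$-th derivatives of the Whittaker factor contribute, and the values of $A$, $A'$, $A''$ at the expansion point produce exactly the coefficients $-1$, $j(1-k)$, and $-j(j-1)$ recorded in the statement. The apparent asymmetry between $n>0$ and $n<0$ (and between the $W$ and $\mathcal{M}^+$ families) comes from whether or not $A(s,n)$ vanishes at $s=k/2$: when it does the leading term disappears and one is pushed to higher derivatives, producing the $j$ and $j(j-1)$ prefactors. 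The $n=0$ case is elementary: since $u^{[j],-}_{k,0}(y) = (-\log y)^j y^{1-k}$ and $u^{[j],+}_{k,0}(y) = (\log y)^j$, the vanishing of the $x$-derivative reduces $\xi_k$ to $y^k\,d/dy$, and a single application of the product/chain rule yields the stated identities.

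The main obstacle will be the bookkeeping of signs: (i) the $(-1)^j$ introduced when reparametrising $s \leftrightarrow 1-s$ to align the Taylor expansion with $(2-k)/2$; (ii) the sign of $n$ controlling the subscript $\mu$ of the Whittaker function; and (iii) the $e^{i\pi}$ branch implicit in $\mathcal{M}^+$. Once these conventions are consistently tracked one sees transparently why the $(+)$-formulas for $n<0$ mirror the $(-)$-formulas for $n>0$, and conversely.
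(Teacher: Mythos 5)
First, a remark on the comparison itself: the paper offers no proof of this lemma --- it is imported verbatim from \cite[Lemma 2.2]{M} --- so there is no in-paper argument to match against. Your strategy (reduce $\xi_k$ on a single Fourier mode to the first-order operator $g \mapsto y^k(g'+2\pi n g)$, use a Whittaker contiguous relation at generic $s$ to write the image of the weight-$k$ seed as $A(s)$ times the weight-$(2-k)$ seed with $A(s)$ of degree at most two, then differentiate $j$ times in $s$ by Leibniz) is the natural route and surely the intended one; your diagnosis that the dichotomy between the cases is governed by whether $A(s)$ vanishes at the expansion point is correct, and the $n=0$ cases are indeed elementary.

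Two points are genuine gaps rather than bookkeeping, and they sit exactly where the content of the lemma lives. (1) The identity $\xi_k(g(y)e^{2\pi inx})=y^k(g'+2\pi n g)e^{-2\pi inx}$ holds only for real-valued $g$, because $\xi_k=2iy^k\overline{\tfrac{\partial}{\partial\bar z}}$ is conjugate-linear; in general one gets $y^k(\overline{g}'+2\pi n\overline{g})e^{-2\pi inx}$. The $W$-family is real for real $s$, but $\mathcal{M}^+_{\mu,\nu}(y)=W_{-\mu,\nu}(ye^{\pi i})$ is genuinely complex (cf.\ the phase $(4\pi|n|e^{\pi i})^{k/2}$ in (\ref{useful2})), so for all the $u_{k,n}^{[j],+}$ cases the conjugation must be carried through and it interacts with the $e^{\pi i}$ branch; your stated reduction formula simply does not apply there. (2) The exact constants $-1$, $j(1-k)$, $-j(j-1)$ are asserted, not derived. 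The delicate issue you label ``obstacle (i)'' is never resolved: your Leibniz expansion produces $s$-derivatives of the weight-$(2-k)$ Whittaker seed evaluated at $s=k/2$, whereas $u^{[m],\pm}_{2-k,-n}$ is by definition the $m$-th derivative evaluated at $s=(2-k)/2$; the symmetry $W_{\mu,\nu}=W_{\mu,-\nu}$ converts one into the other only at the cost of a factor $(-1)^m$ on the $m$-th derivative, and this factor does not disappear by itself --- with $A(s)=(s-k/2)(1-k/2-s)$, so $A(k/2)=0$, $A'(k/2)=1-k$, $A''=-2$, the reflection leaves a residual $(-1)^{j-1}$ (resp.\ $(-1)^j$) multiplying the bracket in the $n>0$ (resp.\ $n<0$) non-zero-index cases, which must be reconciled with the evaluation conventions of \cite{M} before the stated signs are established. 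Until that reconciliation is written out, the argument is an outline of the proof rather than a proof.
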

For example, the function $u_{k,n}^{[j],-}(y) e^{2\pi inx}$ with $n \leq 0$ does not vanish by the action of $\xi_k \circ \Delta_k^j$. Since the termwise $\xi_k$-derivatives in the Fourier expansion in Proposition \ref{FWE} are guaranteed (see Remark after Proposition 2.3 in \cite{M}), we see that a function $f \in H_k^{r,!}$ with a positive integer $r$ is strictly in $H_k^{r-1/2,!}$ if and only if $c_{n,r-1}^- = 0$ for all $n \leq 0$ and $c_{n,r-1}^+ =0$ for all $n>0$. 
\begin{dfn}\label{defholpart}
	When $f(z) \in H_k^{r,!}$ has the Fourier expansion as in Proposition $\ref{FWE}$, we call the part
	\[
		\sum_{n>0} c_{n,0}^- u_{k,n}^{[0],-}(y)e^{2\pi inx} + c_{0,0}^+ + \sum_{n<0} c_{n,0}^+ u_{k,n}^{[0],+}(y)e^{2\pi inx}
	\]
	the holomorphic part of $f(z)$, which we denote by $f(z)^{\mathrm{hol}}$.
\end{dfn}


\section{The Fourier expansion of the Maass-Poincar\'{e} series}\label{s3}

From now, we explain analytic and algebraic properties of the Fourier coefficients of the Maass-Poincar\'{e} series based on \cite{DIT2, JKK}.


\subsection{Analytic aspect}\label{s3.1}

For $k \in \frac{1}{2}\mathbb{Z}$ and integer $m \in \mathbb{Z}$, let
\begin{align*}
	\varphi_{k,m}(z,s) :=  \left\{\begin{array}{ll}
		\Gamma(2s)^{-1}(4\pi |m|y)^{-k/2}M_{\mathrm{sgn}(m)\frac{k}{2}, s-1/2}(4\pi |m|y)e^{2\pi imx} \quad &\text{if } m \neq 0,\\
		y^{s-k/2} \quad &\text{if } m = 0,
	\end{array} \right.
\end{align*}
and define the corresponding Poincar\'{e} series by
\[
	\mathscr{P}_{k,m}(z,s) := \sum_{\gamma \in \Gamma_{\infty} \backslash \Gamma} (\varphi_{k,m}|_k\gamma) (z,s),
\]
where $\Gamma := \mathrm{SL}_2(\mathbb{Z})$ if $k \in \mathbb{Z}$ and $\Gamma := \Gamma_0(4)$ if $k \in \mathbb{Z} + 1/2$. This series is called the Maass-Poincar\'{e} series, and converges absolutely and uniformly on compact subsets in $\mathrm{Re}(s) > 1$. Moreover we set
\begin{align*}
	P_{k,m}(z,s) :=  \left\{\begin{array}{ll}
		\mathscr{P}_{k,m}(z,s) \quad &\text{if } k \in \mathbb{Z}, \\
		\mathrm{pr}_k^+(\mathscr{P}_{k,m}(z,s)) \quad &\text{if } k \in \mathbb{Z}+1/2,
	\end{array} \right.
\end{align*}
where $\mathrm{pr}_k^+$ is Kohnen's projection operator introduced by Kohnen \cite{Kohnen}. For $k = \lambda_k + 1/2$, this operator is given by
\[
	\mathrm{pr}_k^+(g) := (-1)^{\lfloor \frac{\lambda_k+1}{2} \rfloor}\frac{1}{2\sqrt{2}}\biggl(\sum_{\nu\ (\mathrm{mod}\ 4)}\bigl(g|_k A\bigr)|_k B_{\nu}\biggr) + \frac{1}{2}g
\]
where
\[
	A := \Biggl(\left[\begin{array}{cc}4 & 1 \\0 & 4 \end{array}\right], e^{\frac{\pi i}{4}} \biggr),\quad B_{\nu} := \left[\begin{array}{cc}1 & 0 \\4\nu & 1 \end{array}\right],
\]
and the slash operator for $(\gamma, \phi(z))$ is defined by $g|_k(\gamma, \phi(z)) := \phi(z)^{-2k}g(\gamma z)$. We can easily check that this $P_{k,m}(z,s)$ is an eigenfunction of the hyperbolic Laplacian,
\[
	\Delta_k P_{k,m}(z,s) = \bigl(s-\frac{k}{2}\bigr)\bigl(1-\frac{k}{2}-s\bigr)P_{k,m}(z,s).
\]
Furthermore, it can be meromorphically continued in $s$ to $\mathrm{Re}(s) > 1/2$ except for possibly finitely many simple poles at points of the discrete spectrum of $\Delta_k$. (See \cite[Section 3]{Fay}). It is known that the Fourier expansion of $P_{k,m}(z,s)$ can be expressed in terms of the Kloosterman sums and Bessel functions. We put
\begin{align}\label{mathcalW}
	\mathcal{W}_{k,n}(y,s) := \left\{\begin{array}{ll}
		\Gamma(s+\mathrm{sgn}(n)\frac{k}{2})^{-1}|n|^{k-1}(4\pi |n|y)^{-k/2}W_{\mathrm{sgn}(n)\frac{k}{2}, s-1/2}(4\pi|n|y) \quad &\text{if } n \neq 0,\\
		\dfrac{(4\pi)^{1-k}y^{1-s-k/2}}{(2s-1)\Gamma(s-k/2)\Gamma(s+k/2)} \quad &\text{if } n = 0.
	\end{array} \right.
\end{align}
Then, the following Fourier expansions are known.

\begin{prp}\cite[Theorem 3.2]{JKK}\label{JKKthm}
	If $k$ and $m$ are integers and $\mathrm{Re}(s)>1$, then the Poincar\'{e} series $P_{k,m}(z,s)$ has the Fourier expansion
\[
	P_{k,m}(z,s) = \varphi_{k,m}(z, s) + \sum_{n \in \mathbb{Z}} c_{k,m}(n,s)\mathcal{W}_{k,n}(y,s)e^{2\pi inx},
\]
where the coefficients $c_{k,m}(n,s)$ are given by
\begin{align*}
	2\pi i^{-k}\sum_{c>0} \frac{K(m,n,c)}{c} \times \left\{\begin{array}{ll}
		|mn|^{\frac{1-k}{2}}J_{2s-1}\bigl(\frac{4\pi \sqrt{|mn|}}{c}\bigr) &\text{if } mn>0,\\
		|mn|^{\frac{1-k}{2}}I_{2s-1}\bigl(\frac{4\pi \sqrt{|mn|}}{c}\bigr) &\text{if } mn<0,\\
		2^{k-1}\pi^{s+k/2-1}|m+n|^{s-k/2}c^{1-2s} &\text{if } mn=0, m+n \neq 0,\\
		2^{2k-2}\pi^{k-1}\Gamma(2s)(2c)^{1-2s} &\text{if } m=n=0.
	\end{array} \right.
\end{align*}
Here $I_s(y)$ and $J_s(y)$ are Bessel-functions, and we put
\[
	K(m,n,c) := \sum_{\substack{d (c)^*\\ad \equiv 1 (c)}}e\biggl(\frac{am+dn}{c}\biggr),\quad \text{with }e(x):=e^{2\pi ix}
\]
called the Kloosterman sum.
\end{prp}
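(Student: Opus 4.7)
The plan is to apply the standard unfolding technique for Poincar\'e series. Since the statement assumes $k,m \in \mathbb{Z}$ --- so that $P_{k,m}=\mathscr{P}_{k,m}$ with no Kohnen projector to manage --- I decompose $\Gamma_\infty \backslash \Gamma$ into the identity coset, which contributes the seed term $\varphi_{k,m}(z,s)$ directly, and the cosets with $c>0$, parametrised by pairs $(c,d)$ with $c>0$, $(c,d)=1$ and $d \in \mathbb{Z}$. Writing $d = d_0 + c\ell$ with $d_0 \pmod c$ coprime to $c$ and $\ell \in \mathbb{Z}$, the sum over $\ell$ amounts to translating $x$ by integers; together with the Fourier integral $\int_0^1 \cdots e^{-2\pi i n x}\,dx$ this unfolds to $\int_{-\infty}^\infty \cdots e^{-2\pi i n x}\,dx$, a single integral for each pair $(c,d_0)$.

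For each such integral I shift $x \mapsto x - d_0/c$, so that $cz+d_0 \to cz$ and $\gamma z \to a/c - 1/(c^2 z)$ with $a d_0 \equiv 1 \pmod c$. The only $(a,d_0)$-dependence of the integrand after the shift lives in the phase factor $e^{2\pi i(ma+nd_0)/c}$, so summing over $d_0 \pmod c$ coprime to $c$ produces the Kloosterman sum $K(m,n,c)$. The surviving integral,
\[
c^{-2k}\int_{-\infty}^{\infty} z^{-2k}\,\varphi_{k,m}\!\left(-\tfrac{1}{c^2 z},\,s\right) e^{-2\pi i n x}\,dx,
\]
is independent of $d_0$ and must now be evaluated in closed form.

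Step three is this evaluation. For $m \neq 0$, substituting the $M$-Whittaker integral representation of $\varphi_{k,m}$ converts the displayed integral into a classical Bessel transform whose value is a $J_{2s-1}$-Bessel function when $mn>0$ and an $I_{2s-1}$-Bessel function when $mn<0$, in each case multiplied by the normalised factor $\mathcal{W}_{k,n}(y,s)e^{2\pi i n x}$, the weight $|mn|^{(1-k)/2}$, and the prefactor $2\pi i^{-k}$. When exactly one of $m,n$ is zero the integral degenerates to a Gamma-function integral and produces a power $c^{1-2s}$; the subcase $m=n=0$ recovers the constant term of the real-analytic Eisenstein series and yields the $2^{2k-2}\pi^{k-1}\Gamma(2s)(2c)^{1-2s}$ factor. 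Assembling these pieces gives the four cases of the proposition.

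The main technical obstacle is step three: the careful bookkeeping of the $\Gamma$-function normalisations built into $\varphi_{k,m}$ and $\mathcal{W}_{k,n}$, the correct branch of $z^{-2k}$ on the upper half plane, and the sign analysis that routes $mn>0$ to $J_{2s-1}$ and $mn<0$ to $I_{2s-1}$. Once the relevant integral table (e.g.\ \cite{GR}, \S 6.621) is invoked, the remaining ingredients --- coset parametrisation, unfolding, and recognition of $K(m,n,c)$ --- are routine.
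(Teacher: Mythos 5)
The paper does not prove this proposition at all --- it is quoted verbatim from \cite[Theorem 3.2]{JKK} --- and your unfolding argument (identity coset giving the seed, $c>0$ cosets unfolded against the Fourier integral, the shift $x\mapsto x-d_0/c$ isolating the Kloosterman phase $e((am+d_0n)/c)$, and the residual Whittaker--Bessel integral evaluated by tables) is exactly the standard proof given there, following Fay and Bruinier. It is correct in structure; the one slip is that with the paper's normalization $j_k(\gamma,z)^{-2k}=(cz+d)^{-k}$, the surviving integral should carry $c^{-k}z^{-k}$ rather than $c^{-2k}z^{-2k}$, a bookkeeping point that does not affect the method.
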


\begin{prp}\cite[Theorem 4.4]{JKK}\label{Theorem 4.4}
	Let $k = \lambda_k + 1/2$. For any $m$ and $s$ satisfying $(-1)^{\lambda_k}m \equiv 0, 1\ (\mathrm{mod}\ 4)$ and $\mathrm{Re}(s)>1$, the function $P_{k,m}(z,s) = \mathrm{pr}_k^+(\mathscr{P}_{k,m}(z,s))$ has the Fourier expansion
\[
	P_{k,m}(z,s) = \varphi_{k,m}(z,s) + \sum_{(-1)^{\lambda_k}n \equiv 0,1 (4)}b_{k,m}(n,s)\mathcal{W}_{k,n}(y,s)e^{2\pi inx},
\]
where the coefficients $b_{k,m}(n,s)$ are given by
\begin{align*}
	2\pi i^{-k}\sum_{c>0}\biggl(1+\biggl(\frac{4}{c}\biggr)\biggr)\frac{\tilde{K}_k(m,n,4c)}{4c} \times \left\{\begin{array}{ll}
		|mn|^{\frac{1-k}{2}}J_{2s-1}\bigl(\frac{4\pi \sqrt{|mn|}}{4c}\bigr) &\text{if } mn>0,\\
		|mn|^{\frac{1-k}{2}}I_{2s-1}\bigl(\frac{4\pi \sqrt{|mn|}}{4c}\bigr) &\text{if } mn<0,\\
		2^{k-1}\pi^{s+k/2-1}|m+n|^{s-k/2}(4c)^{1-2s} &\text{if } mn=0, m+n \neq 0,\\
		2^{2k-2}\pi^{k-1}\Gamma(2s)(8c)^{1-2s} &\text{if } m=n=0.
	\end{array} \right.
\end{align*}
Here $\tilde{K}_k(m,n,c)$ is the generalized Kloosterman sum
\[
	\tilde{K}_k(m,n,c) := \sum_{\substack{d (c)^*\\ad \equiv 1 (c)}}\biggl(\frac{c}{d}\biggr)\epsilon_d^{2k} e\biggl(\frac{am+dn}{c}\biggr).
\]
\end{prp}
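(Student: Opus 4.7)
The plan is to first compute the Fourier expansion of the unprojected Poincar\'{e} series $\mathscr{P}_{k,m}(z,s)$ and then apply Kohnen's projection operator $\mathrm{pr}_k^+$ termwise.

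For the first step, I would run the same unfolding argument as in the proof of Proposition \ref{JKKthm}, now with $\Gamma = \Gamma_0(4)$ and the half-integral multiplier system $j_k$. The Bruhat decomposition restricts the cosets in $\Gamma_\infty \backslash \Gamma_0(4) / \Gamma_\infty$ to moduli $c > 0$ with $4 \mid c$, while the factors $(c/d)\epsilon_d^{-1}$ coming from $j_k$ collect into the generalized Kloosterman sum $\tilde{K}_k(m,n,c)$. The $y$-integrals are identical to those in Proposition \ref{JKKthm} and therefore produce the same Bessel-$J$, Bessel-$I$, and Gamma factors. This yields a Fourier expansion of $\mathscr{P}_{k,m}(z,s)$ of exactly the same shape as in Proposition \ref{JKKthm}, with $K(m,n,c)/c$ replaced by $\tilde{K}_k(m,n,c)/c$ and with the inner $c$-sum restricted to positive multiples of $4$.

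Next, I would act by $\mathrm{pr}_k^+$. The piece $\frac{1}{2}\mathscr{P}_{k,m}$ simply halves the expansion. For the other summand $\frac{(-1)^{\lfloor(\lambda_k+1)/2\rfloor}}{2\sqrt{2}}\sum_{\nu \pmod 4}(\mathscr{P}_{k,m}|_k A)|_k B_\nu$, I would act termwise on each Fourier mode $\mathcal{W}_{k,n}(y,s)e^{2\pi inx}$. The product $AB_\nu$ applied to a representative $\gamma_{c,d}$ can be rewritten, modulo $\Gamma_\infty$ on both sides, as a matrix whose bottom row is $(4c',d')$; summing over $\nu$ produces a Gauss sum that vanishes unless $(-1)^{\lambda_k}n \equiv 0,1 \pmod 4$ and otherwise contributes a constant multiple of the original Fourier term. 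Reassembling the two contributions converts the inner sum over $4 \mid c$ into a sum over all $c > 0$ weighted by $(1+(\frac{4}{c}))/(4c)$, with the Kloosterman sum promoted to $\tilde{K}_k(m,n,4c)$. This is precisely the formula stated.

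The main obstacle is the bookkeeping in the second step: one must verify that the sign $(-1)^{\lfloor(\lambda_k+1)/2\rfloor}$, the phase $e^{\pi i/4}$ built into $A$, the multiplier $\epsilon_d^{2k}$ arising from $j_k(B_\nu, \cdot)$, the normalization $\frac{1}{2\sqrt{2}}$, and the Gauss sum over $\nu \pmod 4$ all conspire to produce the clean factor $1+(\frac{4}{c})$ without sign or parity mismatches. This is the combinatorial heart of Kohnen's plus-space construction; the purely analytic content (absolute convergence for $\mathrm{Re}(s)>1$ and evaluation of the $y$-integrals) is already subsumed by Proposition \ref{JKKthm}.
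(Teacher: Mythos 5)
This proposition is stated in the paper as an imported result, cited from Jeon--Kang--Kim \cite[Theorem 4.4]{JKK}, and no proof is given in the paper itself; so there is no internal argument to compare yours against. Your two-step strategy --- unfold $\mathscr{P}_{k,m}(z,s)$ over $\Gamma_\infty\backslash\Gamma_0(4)$ to obtain a Fourier expansion with generalized Kloosterman sums $\tilde{K}_k(m,n,c)$ over $4\mid c$, then apply $\mathrm{pr}_k^+$ termwise so that the sum over $\nu \pmod 4$ produces a Gauss sum enforcing $(-1)^{\lambda_k}n\equiv 0,1 \pmod 4$ and reweighting the $c$-sum by $1+\bigl(\frac{4}{c}\bigr)$ --- is precisely the standard route of the cited source, which in turn follows Kohnen \cite{Kohnen}. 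The outline is structurally correct, but note that what you call ``bookkeeping'' is where all of the content of the statement actually lives: $A=\bigl(\bigl[\begin{smallmatrix}4&1\\0&4\end{smallmatrix}\bigr], e^{\pi i/4}\bigr)$ is not an element of $\Gamma_0(4)$, so the rewriting of $AB_\nu\gamma_{c,d}$ modulo $\Gamma_\infty$ on both sides must be done with the half-integral automorphy cocycle tracked throughout, and the claimed cancellation of the sign $(-1)^{\lfloor(\lambda_k+1)/2\rfloor}$, the phase $e^{\pi i/4}$, and the normalization $\tfrac{1}{2\sqrt 2}$ against the Gauss sum is exactly the identity to be proved, not a consistency check after the fact. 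As written, the decisive computation producing the factor $1+\bigl(\frac{4}{c}\bigr)$ is asserted rather than derived, so the proposal is an accurate road map of the known proof rather than a complete argument.
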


We note two symmetric properties for the coefficients $b_{k,m}(n,s)$. First, it follows immediately from the above explicit formula that $b_{k,m}(n,s) = b_{k,n}(m,s)$. Moreover the generalized Kloosterman sum satisfies
\begin{align*}
	\tilde{K}_{3/2}(m,n,c) &= -i\tilde{K}_{1/2}(-m,-n,c),\\
	\tilde{K}_{k+2}(m,n,c) &= \tilde{K}_k(m,n,c).
\end{align*}

\begin{lmm}\label{bkm}
	Let $k = \lambda_k + 1/2$. Then
\begin{align*}
	b_{k,m}(n,s) = (-1)^{\lfloor \frac{\lambda_k+1}{2} \rfloor}b_{1/2,(-1)^{\lambda_k}m}((-1)^{\lambda_k}n,s)  \times \left\{\begin{array}{ll}
		|mn|^{\frac{1-2k}{4}} &\text{if } m \neq 0, n \neq 0,\\
		2^{k-\frac{1}{2}}\pi^{\frac{2k-1}{4}}|m+n|^{-\frac{2k-1}{4}} &\text{if } mn=0, m+n \neq 0,\\
		2^{2k-1}\pi^{k-\frac{1}{2}} &\text{if } m=n=0.
	\end{array} \right.
\end{align*}
\end{lmm}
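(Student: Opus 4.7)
The plan is to compare the explicit Fourier coefficient formula for $b_{k,m}(n,s)$ given in Proposition \ref{Theorem 4.4} directly with the corresponding formula for $b_{1/2,(-1)^{\lambda_k}m}((-1)^{\lambda_k}n,s)$. The key inputs are the two symmetries of the generalized Kloosterman sum stated just before the lemma: the $2$-periodicity $\tilde{K}_{k+2}(m,n,c)=\tilde{K}_k(m,n,c)$, and $\tilde{K}_{3/2}(m,n,c)=-i\,\tilde{K}_{1/2}(-m,-n,c)$. Combining these, one sees that $\tilde{K}_k$ depends only on the parity of $\lambda_k$, and
\begin{align*}
\tilde{K}_k(m,n,c)=\begin{cases}\tilde{K}_{1/2}(m,n,c) & \text{if } \lambda_k \text{ is even},\\ -i\,\tilde{K}_{1/2}(-m,-n,c) & \text{if } \lambda_k \text{ is odd}.\end{cases}
\end{align*}
In either parity this expresses $\tilde{K}_k(m,n,c)$ in terms of $\tilde{K}_{1/2}((-1)^{\lambda_k}m,(-1)^{\lambda_k}n,c)$ up to an explicit $i$-power.

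Next, I would feed this into the prefactor $i^{-k}$ of the Fourier formula. Using $i^{-k}=i^{-1/2}(-i)^{\lambda_k}$, the total constant in front of $\tilde{K}_{1/2}((-1)^{\lambda_k}m,(-1)^{\lambda_k}n,4c)$ becomes $i^{-1/2}(-i)^{\lambda_k}$ when $\lambda_k$ is even and $i^{-1/2}(-i)^{\lambda_k+1}$ when $\lambda_k$ is odd. Because $(-i)^{2j}=(-1)^j$, checking $\lambda_k\bmod 4$ in each parity shows that in both cases this equals $(-1)^{\lfloor(\lambda_k+1)/2\rfloor}\,i^{-1/2}$, which supplies precisely the sign $(-1)^{\lfloor(\lambda_k+1)/2\rfloor}$ in the statement and leaves $2\pi i^{-1/2}$, the correct prefactor for $b_{1/2}$.

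Finally, I would compare the residual scalar kernels in the four cases of Proposition \ref{Theorem 4.4}. Under $(m,n)\mapsto((-1)^{\lambda_k}m,(-1)^{\lambda_k}n)$ both $|mn|$ and $|m+n|$ are invariant, so the Bessel-function pieces $J_{2s-1},I_{2s-1}$, the arguments $(4c)^{1-2s}$, and the Gamma factor all match on the nose. The remaining ratios are elementary: $|mn|^{(1-k)/2}/|mn|^{1/4}=|mn|^{(1-2k)/4}$ in the cases $mn\neq 0$; $2^{k-1}\pi^{s+k/2-1}|m+n|^{s-k/2}$ divided by the $k=1/2$ analogue collapses to $2^{k-1/2}\pi^{(2k-1)/4}|m+n|^{-(2k-1)/4}$ in the case $mn=0$, $m+n\neq 0$; and $2^{2k-2}\pi^{k-1}/(2^{-1}\pi^{-1/2})=2^{2k-1}\pi^{k-1/2}$ in the case $m=n=0$. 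Each matches the claim. The computation is almost entirely routine; the only step requiring real care is the sign bookkeeping in the previous paragraph, which is the main place where an error could slip in.
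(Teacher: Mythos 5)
Your proposal is correct, and it follows exactly the route the paper intends: the lemma is stated immediately after the two Kloosterman-sum symmetries $\tilde{K}_{k+2}=\tilde{K}_k$ and $\tilde{K}_{3/2}(m,n,c)=-i\tilde{K}_{1/2}(-m,-n,c)$ precisely so that it follows by the term-by-term comparison with Proposition \ref{Theorem 4.4} that you carry out, and your sign bookkeeping (the identification of $i^{-1/2}(-i)^{\lambda_k}$, resp.\ $i^{-1/2}(-i)^{\lambda_k+1}$, with $(-1)^{\lfloor(\lambda_k+1)/2\rfloor}i^{-1/2}$) and the three ratios of scalar kernels all check out.
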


Consequently, our goal is shifted to investigate the coefficients $b_{1/2,m}(n,s)$. As explained in the next section, this coefficient was studied by Duke-Imamo\={g}lu-T\'{o}th \cite{DIT2}. 


\subsection{Algebraic aspect}\label{s3.2}

For an integer $m$, we consider the Niebur-Poincar\'{e} series \cite{N} $G_m(z,s)$ defined by
\[
	G_m(z,s) := \sum_{\gamma \in \mathrm{SL}_2(\mathbb{Z})_{\infty} \backslash \mathrm{SL}_2(\mathbb{Z})} (\phi_m|_0\gamma) (z,s),
\]
where $\phi_m(z,s)$ is defined by
\begin{align*}
	\phi_m(z,s) :=  \left\{\begin{array}{ll}
		2\pi|m|^{\frac{1}{2}}y^{\frac{1}{2}}I_{s-\frac{1}{2}}(2\pi|m|y)e^{2\pi imx} \quad &\text{if } m \neq 0,\\
		y^s \quad &\text{if } m = 0.
	\end{array} \right.
\end{align*}
Here there is one remark about the relation to the Maass-Poincar\'{e} series. Since it holds that \cite[(9.235)]{GR}
\[
	M_{0,\nu}(z) = 2^{2\nu}\Gamma(\nu+1)z^{1/2}I_{\nu}\bigl(\frac{z}{2}\bigr),
\]
for $m\neq 0$, we have
\[
	\varphi_{0,m}(z,s) = \Gamma(2s)^{-1}2^{2s-1}\Gamma(s+1/2)\pi^{-1/2}\phi_m(z,s).
\]
Moreover by the Legendre duplication formula
\[
	\Gamma(2s) = \frac{2^{2s-1}\Gamma(s)\Gamma(s+1/2)}{\pi^{1/2}},
\]
thus we have
\[
	\varphi_{0,m}(z,s) = \Gamma(s)^{-1}\phi_m(z,s) \quad \text{for }m\neq 0,
\]
that is, $P_{0,m}(z,s) = \Gamma(s)^{-1}G_m(z,s)$ holds.\\

From now, we consider the modified traces 
\begin{align}\label{modtr}
	\widetilde{\mathrm{Tr}}_{d,D}(G_m(z,s)) := \left\{\begin{array}{ll}
		\mathrm{Tr}_{d,D}(G_m(z,s)), \quad &\text{if }dD<0,\\
		B(s)^{-1}\mathrm{Tr}_{d,D}(G_m(z,s)), \quad &\text{if }dD>0, dD \neq \square,
	\end{array} \right.
\end{align}
for a fundamental discriminant $D$ or $D=1$. Here we put $B(s) := 2^s\Gamma(s/2)^2/(2\pi \Gamma(s))$. This trace function was considered in \cite[Proposition 5]{DIT2}. Suppose that $d$ and $D$ are not both negative. Then for $\mathrm{Re}(s)>1$, Duke-Imamo\={g}lu-T\'{o}th \cite[(5.2)]{DIT2} showed

\begin{align}\label{key}
	\begin{split}
		\widetilde{\mathrm{Tr}}_{d,D}(G_m(z,s)) = \left\{\begin{array}{ll}
			\sum_{0<n|m}\bigl(\frac{D}{n}\bigr)b_{\frac{1}{2},d}\bigl(\frac{m^2D}{n^2}, \frac{s}{2}+\frac{1}{4}\bigr), \quad &\text{if }m \neq 0,\\
			2^{s-1}\pi^{-\frac{s+1}{2}}|D|^{\frac{s}{2}}L_D(s)b_{\frac{1}{2},d}\bigl(0, \frac{s}{2}+\frac{1}{4}\bigr), \quad &\text{if }m =0,
		\end{array} \right.
	\end{split}
\end{align}
where $L_D(s) := \sum_{n=1}^{\infty}\bigl(\frac{D}{n}\bigr)n^{-s}$ is the Dirichlet $L$-function. Furthermore, by using the M\"{o}bius inversion, this can be written in the form
\begin{align}\label{key2}
	b_{1/2,d}\bigl(m^2D,\frac{s}{2}+\frac{1}{4}\bigr) = \left\{\begin{array}{ll}
		\sum_{0<n|m}\mu(n)\bigl(\frac{D}{n}\bigr)\widetilde{\mathrm{Tr}}_{d,D}(G_{m/n}(z,s)), \quad &\text{if }m\neq 0,\\
		2^{1-s}\pi^{\frac{s+1}{2}}|D|^{-\frac{s}{2}}L_D(s)^{-1}\widetilde{\mathrm{Tr}}_{d,D}(G_0(z,s)), \quad &\text{if }m=0.
	\end{array} \right.
\end{align}

As a remark, for $d=0$ and $m \neq 0$, we see that $b_{1/2,0}(m^2D,s) = b_{1/2,m^2D}(0,s)$. For the remaining case of $d=m=0$ is given by
\begin{align}\label{Ibu}
	b_{1/2,0}(0,s) = \pi^{1/2}2^{5/2-6s}\Gamma(2s)\frac{\zeta(4s-2)}{\zeta(4s-1)}.
\end{align}
(see \cite[(2.24)]{DIT2} and \cite[Proposition 2.3]{IS}). We also remark that these equations (\ref{key}) and (\ref{key2}) do not hold when $d<0, D<0$. On the other hand, Jeon-Kang-Kim \cite{JKK3} considered another modification $\mathrm{Tr}_{d,D}^*$ to hold these equations even for $d<0$ and $D<0$. In conclusion, under some assumptions on discriminants $d$ and $D$, the Fourier coefficients $b_{k,m}(n,s)$ are expressed in terms of the modified traces $\widetilde{\mathrm{Tr}}_{d,D}(G_0(z,s))$.


\section{Polyharmonic Maass forms, Lagarias-Rhoades}\label{s4}

In this section, we give a basis for the space $H_k^r$ as a half-integral weight analogue of Lagarias-Rhoades' work \cite{LR}. Let $k \in \mathbb{Z} + 1/2$. Since we always assume Kohnen's plus-condition, the behavior of $f \in H_k^r$ at the cusps $0$ and $1/2$ is determinde by that at $i\infty$. We recall that $F_{k,m,r}(z)$ and $G_{k,m,r}(z)$ are defined as
\begin{align}\label{FGdef}
	P_{k,m}(z,s) = \left\{\begin{array}{ll}
		\sum_{r \in \mathbb{Z}} F_{k,m,r}(z)\bigl(s+\frac{k}{2}-1\bigr)^r \quad &\text{if } k \leq 1/2, \\
\ \\
		\sum_{r \in \mathbb{Z}} G_{k,m,r}(z)\bigl(s-\frac{k}{2}\bigr)^r \quad &\text{if } k \geq 3/2,
	\end{array} \right.
\end{align}
after taking the analytic continuation. In addition, we recall that the coefficients $F_{k,m,r}(z)$ and $G_{k,m,r}(z)$ vanish if $r< 0$ except for $F_{1/2,n^2,-1}(z) \neq 0$ with $n \geq 0$ as explained in Section \ref{s1}. By the same argument as \cite[Section 5]{M}, we see that these coefficients satisfy the following recurrence relations. For $m\neq 0$,
\begin{align}\label{xirec1}
	\begin{split}
		\xi_k F_{k,m,r}(z) &= (4\pi |m|)^{1-k}\biggl\{(1-k)G_{2-k,-m,r}(z) + G_{2-k,-m,r-1}(z)\biggr\},\\
		\xi_k G_{k,m,r}(z) &=(4\pi |m|)^{1-k}F_{2-k,-m,r-1}(z),
	\end{split}
\end{align}
and for $m =0$,
\begin{align}\label{xirec2}
	\begin{split}
		\xi_k F_{k,0,r}(z) &= (1-k)G_{2-k,0,r}(z) + G_{2-k,0,r-1}(z),\\
		\xi_k G_{k,0,r}(z) &= F_{2-k,0,r-1}(z).
	\end{split}
\end{align}

Our main goal in this section is to show the following theorem.

\begin{thm}\label{Theorem 4.1}
	Let $r \geq 1$ be an integer, and $k =\lambda_k + 1/2$. Then
	\begin{enumerate}
		\item For $k \leq -1/2$, $H_k^{1/2} = \{0\}$ and $\{F_{k,0,0}(z), \dots, F_{k,0,r-1}(z)\}$ is a basis for $H_k^r = H_k^{r+1/2}$.
		\item For $k = 1/2$, $\{F_{1/2,0,-1}(z), \dots, F_{1/2,0,r-2}(z)\}$ is a basis for $H_{1/2}^r = H_{1/2}^{r-1/2}$.
		\item For $k = 3/2$, $H_{3/2}^{1/2} = \{0\}$ and $\{G_{3/2,0,0}(z), \dots, G_{3/2,0,r-1}\}$ is a basis for $H_{3/2}^r = H_{3/2}^{r+1/2}$.
		\item For $k \geq 5/2$, we have $H_k^r = H_k^{r-1/2} = E_k^r + S_k$, where $S_k$ consists of holomorphic cusp forms on $\Gamma_0(4)$ and $E_k^r$ is spanned by $\{ G_{k,0,0}(z), \dots, G_{k,0,r-1}(z)\}$.
	\end{enumerate}
\end{thm}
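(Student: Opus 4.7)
The plan is to handle the four cases in parallel, each via three steps: (i) show that the candidate functions lie in $H_k^r$ (or $H_k^{r\pm 1/2}$), (ii) check linear independence, and (iii) prove spanning by induction on $r$, all mimicking the integer-weight argument of \cite[Section~5]{M}.

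\emph{Membership.} The starting point is
\[
\Delta_k P_{k,0}(z,s) = \bigl(s-\tfrac{k}{2}\bigr)\bigl(1-\tfrac{k}{2}-s\bigr) P_{k,0}(z,s).
\]
Setting $\epsilon = s-s_0$ with $s_0 = 1-\tfrac{k}{2}$ in cases (1)--(2) and $s_0 = \tfrac{k}{2}$ in cases (3)--(4), the eigenvalue factors as $-(1-k)\epsilon - \epsilon^2$ and $(1-k)\epsilon - \epsilon^2$ respectively, so comparing Laurent coefficients gives the recurrence
\[
\Delta_k F_{k,0,j}(z) = -(1-k) F_{k,0,j-1}(z) - F_{k,0,j-2}(z),
\]
and its counterpart for $G_{k,0,j}$. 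Together with the vanishing $F_{k,0,j}=G_{k,0,j}=0$ for $j<0$ (aside from $F_{1/2,0,-1}$, which is itself harmonic), iterating yields $\Delta_k^r F_{k,0,r-1}=0$ and $\Delta_k^r G_{k,0,r-1}=0$, so the depth is correct. Polynomial growth at $i\infty$ is read off Proposition~\ref{Theorem 4.4}: at $m=0$ the Fourier coefficients involve only simple powers of $y$ and $\log y$ after Laurent expansion, and Kohnen's plus condition transports the bound to the cusps $0,1/2$.

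\emph{Linear independence.} The $n=0$ part of $F_{k,0,j}$ (resp.\ $G_{k,0,j}$) is an explicit combination of $u_{k,0}^{[i],-}(y) = (-1)^i(\log y)^i y^{1-k}$ and $u_{k,0}^{[i],+}(y) = (\log y)^i$ in which the highest power of $\log y$ grows with $j$, so a nontrivial dependence would force this leading power to cancel, contradiction.

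\emph{Spanning.} I induct on $r$. The base case $r=1$ (resp.\ $r=1/2$) is the classical identification of $H_k^1$ and $H_k^{1/2}$ with harmonic and holomorphic plus-type forms of polynomial growth; in each of the four weight ranges this is a known short list (zero for $k\le -1/2$ and $k=3/2$; $\mathbb{C}\theta$ for $k=1/2$; $\mathbb{C} G_{k,0,0}\oplus S_k^+$ for $k\ge 5/2$). For the step, given $f\in H_k^r$ one has $\Delta_k f\in H_k^{r-1}$ because $\Delta_k = -\xi_{2-k}\circ\xi_k$; by induction $\Delta_k f$ lies in the proposed basis, and using the recurrence backwards I construct $g=\sum c_j F_{k,0,j}$ (or $\sum c_j G_{k,0,j}$) with $\Delta_k g=\Delta_k f$. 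Then $f-g$ is harmonic of polynomial growth, hence handled by the base case, and $f$ is expressed in the candidate basis. The half-integer coincidences $H_k^r = H_k^{r\pm 1/2}$ are deduced from $\xi_k:H_k^{r+1/2}\to H_{2-k}^{r}$ and the recurrences (\ref{xirec1})--(\ref{xirec2}): after subtracting an explicit $\xi_k$-preimage, the remainder lands in $H_k^{r}$, closing the loop.

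\emph{Main obstacle.} The sharp point is the exceptional behavior at $k=1/2$ and $k=3/2$, where $P_{k,m}(z,s)$ has a pole at the spectral point $s=3/4$. This forces the Laurent index to start at $-1$ (producing the Zagier-type theta factor $F_{1/2,0,-1}$, and analogously at $k=3/2$), so that both the indexing of the basis and the base case $r=1$ shift by one; keeping the recurrences consistent through this shift, and verifying that the accompanying collapse $H_{1/2}^r = H_{1/2}^{r-1/2}$ (and $H_{3/2}^r = H_{3/2}^{r+1/2}$ implicit in (3)) holds in the presence of the extra exceptional basis element, is the delicate bookkeeping that the argument turns on.
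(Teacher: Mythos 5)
Your overall strategy---membership via the Laurent expansion of $P_{k,0}(z,s)$ at the spectral point and the resulting $\Delta_k$-recurrence, linear independence from the $(\log y)^j$ terms in the constant coefficient, spanning by induction on the depth, and the half-integer collapses bootstrapped from $M_{3/2}^{+}=\{0\}$ through the $\xi$-operator---is essentially the route the paper takes (the paper phrases the spanning step as a dimension bound $\dim H_k^{r+1}\le \dim H_k^r+1$ proved by contradiction rather than an explicit lifting, but that is only a presentational difference).

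There is, however, one genuine gap, concentrated in the case $k\ge 5/2$ (and, implicitly, in your base case for $k\le -1/2$). In the inductive step you write: ``by induction $\Delta_k f$ lies in the proposed basis, and using the recurrence backwards I construct $g$ with $\Delta_k g=\Delta_k f$; then $f-g$ is harmonic.'' For $k\ge 5/2$ the induction hypothesis gives $\Delta_k f\in E_k^{r-1}+S_k$, and the recurrence
$\Delta_k G_{k,0,j}=(1-k)G_{k,0,j-1}-G_{k,0,j-2}$ only inverts $\Delta_k$ on the Eisenstein span $E_k^{r-1}$; it produces no preimage of a cusp form. If $\Delta_k f$ had a nonzero component $s\in S_k$, then $f-g$ would satisfy $\Delta_k(f-g)=s\neq 0$ and would not be harmonic, and your argument stops. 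The entire content of the statement $H_k^r=E_k^r+S_k$ is precisely that this cannot happen, i.e.\ that no polyharmonic form of \emph{polynomial} growth maps onto a nonzero cusp form under $\Delta_k=-\xi_{2-k}\circ\xi_k$. This is exactly the point the paper flags at the end of Section \ref{s4}: by \cite[Section 6.3]{LR} or \cite[Proposition 5.13]{BFOR}, any $F$ with $\xi_{2-k}F=f$ for a nonzero cusp form $f$ necessarily has exponential growth (equivalently, one kills the cuspidal component by the Petersson-pairing/Stokes argument $\langle s,s\rangle=-\langle \xi_{2-k}\xi_k h,s\rangle=0$ when $h$ has polynomial growth). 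The same input is needed to justify your base case ``known short list'': for $k\le -1/2$ one has $\xi_k\colon H_k^1\to M_{2-k}^{+}$ with trivial kernel, and $M_{2-k}^{+}$ contains cusp forms, so $\dim H_k^1=1$ (spanned by $F_{k,0,0}$, not zero) only after discarding the cuspidal part of the image by the same growth argument. With this ingredient added, your proof closes; without it, both the base case for $k\le-1/2$ and the inductive step for $k\ge 5/2$ are incomplete.
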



\subsection{Weight $1/2$ case}

This section is based on the work of Duke-Imamo\={g}lu-T\'{o}th \cite{DIT2}. They showed
\[
	\lim_{s \to \frac{3}{4}}\biggl(P_{1/2,0}(z,s)-\frac{\frac{3}{4\pi}\theta(z)}{s-3/4}\biggr) = 3\widehat{\mathbf{Z}}_+(z) = F_{1/2,0,0}(z) \in H_{1/2}^{3/2},
\]
where $\theta(z) := \sum_{r\in\mathbb{Z}}q^{r^2}$ and $\widehat{\mathbf{Z}}_+(z)$ was given by \cite[Theorem 4]{DIT2}
\[
	\widehat{\mathbf{Z}}_+(z) = \sum_{d>0} \frac{1}{\sqrt{d}}\mathrm{Tr}_{d,1}(1)q^d + \text{(non-holomorphic part)},
\]
with suitable terms $\mathrm{Tr}_{d,1}(1)$ for square $d$. (Note that our definitions of $\mathrm{Tr}_{d,1}(1)$ and the holomorphic part are slightly different from theirs). This formula is an analogue of the Kronecker limit formula, and one example that the generating function of trace-values is realized as the holomorphic part of a polyharmonic weak Maass form. Consider the Laurent expansion
\[
	P_{1/2,0}(z,s) = \sum_{r = -1}^{\infty}F_{1/2,0,r}(z)(s-3/4)^r.
\]
Then by the important property
\[
	\Delta_{1/2} P_{1/2,0}(z,s) = \biggl(s-\frac{1}{4}\biggr)\biggl(\frac{3}{4}-s\biggr)P_{1/2,0}(z,s),
\]
we see that $\Delta_{1/2}^{r+2} F_{1/2,0,r}(z) = 0$. Furthermore from Proposition \ref{Theorem 4.4}, for $\mathrm{Re}(s)>1$ we have
\[
	P_{1/2,0}(z,s) = y^{s-1/4} + \sum_{n \equiv 0, 1 (4)} b_{1/2,0}(n,s) \mathcal{W}_{1/2,n}(y,s) e^{2\pi inx}.
\]
By the analytic continuation of $P_{1/2,0}(z,s)$ to $s = 3/4$, we can obtain the Fourier expansion of the function $F_{1/2,0,r}(z)$ from this Fourier expansion. (For more details, see \cite[Section 2]{DIT2}). This function $P_{1/2,0}(z,s)$ has no exponentially growing terms, thus we see that each coefficient $F_{1/2,0,r}(z)$ is a polyharmonic Maass form of weight $1/2$ and depth $r+2$ (actually depth $r+3/2$). Now we show the following lemma.

\begin{lmm}
The set $\{F_{1/2,0,-1}(z), \dots, F_{1/2,0,r-2}(z)\}$ is a basis for $H_{1/2}^r = H_{1/2}^{r-1/2}$.
\end{lmm}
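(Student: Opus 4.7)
The plan is to establish the three standard claims — containment, linear independence, and spanning — separately, combining the Laurent-expansion recurrences with an $\xi$-induction in the spirit of \cite{M}.

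For containment, I would verify that each $F_{1/2,0,j}(z)$, for $-1 \le j \le r-2$, lies in $H_{1/2}^{j+3/2} \subseteq H_{1/2}^{r-1/2}$. Inserting the Laurent expansion of $P_{1/2,0}(z,s)$ into the eigenvalue equation $\Delta_{1/2} P_{1/2,0}(z,s) = (s-\tfrac14)(\tfrac34 - s)\,P_{1/2,0}(z,s)$ at $s = 3/4$ yields the recurrence $\Delta_{1/2} F_{1/2,0,j} = -\tfrac12 F_{1/2,0,j-1} - F_{1/2,0,j-2}$; since $F_{1/2,0,-2}=0$ and $F_{1/2,0,-1}= \tfrac{3}{4\pi}\theta$ is holomorphic, induction on $j$ gives $\Delta_{1/2}^{j+2}F_{1/2,0,j}=0$. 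To refine the depth from $j+2$ to $j+3/2$, I would apply the $\xi$-recurrences (\ref{xirec2}) repeatedly and use $\xi_{1/2}\theta = 0$ as the base case.

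For linear independence, I would read off the Fourier expansion of $P_{1/2,0}(z,s)$ from Proposition \ref{Theorem 4.4}. The contribution $\varphi_{1/2,0}(z,s)=y^{s-1/4}$ expands as $y^{1/2}\sum_{i\ge 0}\tfrac{(\log y)^i}{i!}(s-3/4)^i$, so $F_{1/2,0,j}(z)$ contains the term $\tfrac{1}{j!}\,y^{1/2}(\log y)^{j}$, and this is strictly the highest power of $\log y$ appearing in its constant Fourier coefficient (all other Kloosterman–Whittaker contributions, including those produced by the $\zeta(4s-2)$-pole of $b_{1/2,0}(0,s)\mathcal{W}_{1/2,0}(y,s)$, are of lower log-order once the leading singular part is absorbed into $F_{1/2,0,-1}$). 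Matching top powers in any relation $\sum a_j F_{1/2,0,j}=0$ then forces all $a_j = 0$.

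For spanning I would induct on $r$. The base case $r=1$ reduces to $H_{1/2}^{1}=M_{1/2}^{+}(\Gamma_0(4))=\mathbb{C}\theta$, which follows from the Bruinier–Funke-type exact sequence together with the vanishing $M_{3/2}^{+}(\Gamma_0(4))=\{0\}$ for the plus-space. For the inductive step, given $f\in H_{1/2}^{r}$, the image $\xi_{1/2}f$ lies in $H_{3/2}^{r-1/2}$ and, by the weight-$3/2$ part of Theorem \ref{Theorem 4.1}, is a linear combination $\sum b_i G_{3/2,0,i}$. The explicit identity $\xi_{1/2}F_{1/2,0,j} = \tfrac12 G_{3/2,0,j} + G_{3/2,0,j-1}$ from (\ref{xirec2}) then lets me solve triangularly for constants $a_j$ with $\xi_{1/2}\bigl(\sum a_j F_{1/2,0,j}\bigr) = \xi_{1/2}f$. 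Subtracting, the difference sits in $\ker\xi_{1/2}\cap H_{1/2}^{r}=M_{1/2}^{+}(\Gamma_0(4))=\mathbb{C}F_{1/2,0,-1}$, so $f$ is in the asserted span; the equality $H_{1/2}^{r}=H_{1/2}^{r-1/2}$ is then automatic because the basis sits in $H_{1/2}^{r-1/2}$. The main obstacle is that this spanning step is interlocked with the weight-$3/2$ case of Theorem \ref{Theorem 4.1}, so the two must be organized simultaneously to avoid circularity, and one must justify that $\ker\xi_{1/2}$ on the polynomial-growth space really collapses to the holomorphic forms.
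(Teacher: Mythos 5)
Your proposal is correct in substance, but the spanning step takes a genuinely different route from the paper. The paper does not lift an arbitrary $f\in H_{1/2}^r$ through $\xi_{1/2}$; instead it proves an a priori dimension bound: after showing $H_{1/2}^1=H_{1/2}^{1/2}=\mathbb{C}\theta$ (exactly your base case, via $M_{3/2}^{+}=\{0\}$ since $M_{3/2}=\mathbb{C}\theta^3$ fails the plus-condition), it argues that $\dim H_{1/2}^{r+1}\leq \dim H_{1/2}^{r}+1$ by pure linear algebra: if there were two extra dimensions, some nonzero combination would be annihilated by $\Delta_{1/2}$ and hence be a multiple of $\theta$, contradicting independence. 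Combined with exhibiting the $r$ candidate elements, this pins down the space without ever invoking the weight-$3/2$ basis. Your approach instead solves the triangular system $\xi_{1/2}F_{1/2,0,j}=\tfrac12 G_{3/2,0,j}+G_{3/2,0,j-1}$ against the weight-$3/2$ basis and then identifies $\ker\xi_{1/2}$ with $M_{1/2}^{+}$. This buys an explicit representation of any given $f$ and makes the surjectivity of $\xi_{1/2}$ transparent, but at the cost of the interlocking you correctly flag; the paper sidesteps that entirely because each weight's dimension bound is self-contained (a joint induction on $2r$ does repair your version, since the weight-$3/2$ input you need is always at strictly smaller depth).

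One imprecision in your independence argument: the claim that $\tfrac{1}{j!}y^{1/2}(\log y)^{j}$ is \emph{the highest} power of $\log y$ in the constant coefficient of $F_{1/2,0,j}$ is false. The simple pole of $b_{1/2,0}(0,s)$ at $s=3/4$ multiplied against the expansion of $\mathcal{W}_{1/2,0}(y,s)\propto y^{3/4-s}$ produces a $(\log y)^{j+1}$ term in $F_{1/2,0,j}$ --- but attached to $u_{1/2,0}^{[j+1],+}(y)=(\log y)^{j+1}$, i.e.\ to the $y^{0}$-component, not the $y^{1/2}$-component. The argument survives because $\varphi_{1/2,0}(z,s)=y^{s-1/4}$ is the sole source of $u_{1/2,0}^{[\cdot],-}$-type terms, so the coefficient of $y^{1/2}(\log y)^{j}$ in $F_{1/2,0,i}$ is nonzero precisely when $i=j$; matching those coefficients (and then using $F_{1/2,0,-1}=\tfrac{3}{4\pi}\theta\neq0$) still forces all $a_j=0$. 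You should phrase the separation in terms of the two distinct $y$-powers rather than log-order.
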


\begin{proof}
This proof is based on \cite[Proof of Proposition 10.1]{LR}. It is known that the space $H_{1/2}^{1/2} = M_{1/2}^+$ has dimension $1$ and is spanned by the theta function $\theta(z)$. By Proposition \ref{FWE}, every $f \in H_{1/2}^1$ has a Fourier expansion of the form
\[
	f(z) = \sum_{n \equiv 0, 1 (4)}c_n^- u_{1/2,n}^{[0],-}(y)e^{2\pi inx} + c_0^+.
\]
By the action of $\xi_{1/2}$, we have
\begin{align*}
	\xi_{1/2}f(z) &= \frac{1}{2}c_0^- u_{3/2,0}^{[0],+}(y) - \sum_{0<n \equiv 0,3 (4)} c_{-n}^- u_{3/2,n}^{[0],-}(y)e^{2\pi inx}\\
	&= \frac{1}{2}c_0^-  - \sum_{0<n \equiv 0,3 (4)} c_{-n}^- (4\pi n)^{3/4}q^n,
\end{align*}
that is, $\xi_{1/2}f(z)$ is a holomorphic modular form of weight $3/2$. However it is well-known that the space $M_{3/2}$ without Kohnen's plus-condition is generated by $\theta(z)^3$. (see \cite[Proposition 4 in Section IV-1]{Kob}). Since the function $\theta(z)^3$ does not satisfy the plus-condition, the result $\xi_{1/2}f(z)$ should be $0$. Thus we obtain $f$ has depth $1/2$ actually. Similarly we can show $H_{1/2}^{r} = H_{1/2}^{r-1/2}$ by induction. Next we show $\mathrm{dim} H_{1/2}^{r+1} \leq \mathrm{dim} H_{1/2}^{r} + 1$. We assume that $\mathrm{dim}H_{1/2}^r =m$ and $\mathrm{dim}H_{1/2}^{r+1} \geq m+2$. Then we can take $m+2$ linearly independent functions $f_0(z) = \theta(z), f_1(z), \dots f_{m+1}(z) \in H_{1/2}^{r+1}$. On the other hand, for $1 \leq i \leq m+1$, it holds that $0 \neq \Delta_{1/2}f_{i}(z) \in H_{1/2}^r$. By our assumption of $\mathrm{dim}H_{1/2}^r =m$, there are some constants $\alpha_i \in \mathbb{C}$ such that $\sum_{i=1}^{m+1}\alpha_i \Delta_{1/2}f_i(z) = 0$. Thus $g(z) := \sum_{i=1}^{m+1} \alpha_i f_i(z)$ satisfies $\Delta_{1/2}g(z) =0$, that is, $g(z) = \theta(z)$ up to a constant multiple. This contradicts our assumption.
\end{proof}


\subsection{Weight $3/2$ case}\label{s4.2}

This section is based on the work of Jeon-Kang-Kim \cite{JKK}. As we mentioned above, there is no holomorphic modular form satisfying the plus-condition of weight $3/2$ . They showed
\[
	P_{3/2,0}(z,\frac{3}{4}) = G_{3/2,0,0}(z) = -12 E_{3/2}(z),
\]
where $E_{3/2}(z) \in H_{3/2}^1$ is Zagier's Eisenstein series of weight $3/2$ given by
\[
	E_{3/2}(z) =\sum_{d \leq 0}H(|d|)q^{-d} + \frac{1}{16\pi \sqrt{y}}\sum_{n \in \mathbb{Z}}\beta(4\pi n^2y)q^{-n^2}.
\]
Here $\beta(s) = \int_1^{\infty}t^{-3/2}e^{-st}dt$. From Proposition \ref{Theorem 4.4}, we have
\[
	P_{3/2,0}(z,s) = y^{s-3/4} + \sum_{n \equiv 0,3 (4)}b_{3/2,0}(n,s)\mathcal{W}_{3/2,n}(y,s)e^{2\pi inx}.
\]
In the same way as the weight $1/2$ case, we can show the following lemma.
\begin{lmm}
	The set $\{G_{3/2,0,0}(z), \dots, G_{3/2,0,r-1}(z)\}$ is a basis for $H_{3/2}^r = H_{3/2}^{r+1/2}$.
\end{lmm}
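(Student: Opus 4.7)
The plan mirrors the weight $1/2$ proof in three steps: establish $H_{3/2}^{1/2} = \{0\}$, collapse half-integer depths to integer ones, and then identify $\{G_{3/2,0,j}(z)\}_{j=0}^{r-1}$ as a basis of $H_{3/2}^r$ by induction on $r$. The vanishing $H_{3/2}^{1/2} = \{0\}$ follows by recalling that $H_{3/2}^{1/2}$ is the Kohnen plus-space $M_{3/2}^+$ and that $M_{3/2}(\Gamma_0(4))$ without the plus-condition is generated by $\theta(z)^3$, which itself violates Kohnen's plus-condition (exactly the fact already used in the weight $1/2$ argument).

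For the second step, let $f \in H_{3/2}^{r+1/2}$. Unraveling the definition of depth, this means $\xi_{3/2} \Delta_{3/2}^r f = 0$, so $\Delta_{3/2}^r f$ is a holomorphic modular form of weight $3/2$. Since $\Delta_{3/2}$ preserves Kohnen's plus-condition and the polynomial growth condition at the cusp, we conclude $\Delta_{3/2}^r f \in H_{3/2}^{1/2} = \{0\}$, so $f \in H_{3/2}^r$ and hence $H_{3/2}^{r+1/2} = H_{3/2}^r$.

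For the third step, I would induct on $r$. For the base case $r=1$, the map $\xi_{3/2}: H_{3/2}^1 \to H_{1/2}^{1/2} = \mathbb{C}\theta(z)$ has kernel $M_{3/2}^+ = \{0\}$, and by $(\ref{xirec2})$ we have $\xi_{3/2} G_{3/2,0,0}(z) = F_{1/2,0,-1}(z)$, a nonzero multiple of $\theta(z)$. Hence $G_{3/2,0,0}$ spans $H_{3/2}^1$. For the inductive step, the map $\Delta_{3/2}: H_{3/2}^{r+1} \to H_{3/2}^r$ has kernel $H_{3/2}^1$ of dimension $1$, so $\dim H_{3/2}^{r+1} \leq r+1$ by the induction hypothesis. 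Writing $u = s - 3/4$ and comparing Laurent coefficients on both sides of the eigenvalue relation $\Delta_{3/2} P_{3/2,0}(z,s) = (s-3/4)(1/4-s) P_{3/2,0}(z,s)$ yields
\[
\Delta_{3/2} G_{3/2,0,j}(z) = -\tfrac{1}{2} G_{3/2,0,j-1}(z) - G_{3/2,0,j-2}(z),
\]
from which a direct induction (using $G_{3/2,0,-1} = 0$ and the fact that $\Delta_{3/2}^j$ annihilates $G_{3/2,0,\ell}$ for $\ell < j$) gives $\Delta_{3/2}^j G_{3/2,0,j}(z) = (-1/2)^j G_{3/2,0,0}(z) \neq 0$. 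This forces $G_{3/2,0,0}, \ldots, G_{3/2,0,r}$ to be linearly independent, so they span the $(r+1)$-dimensional space $H_{3/2}^{r+1}$.

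The main obstacle is the vanishing step $H_{3/2}^{1/2} = \{0\}$, which underpins both the collapse $H_{3/2}^{r+1/2} = H_{3/2}^r$ and the base case of the induction; once this is in hand, the rest of the argument proceeds essentially in parallel with the weight $1/2$ case, with the roles of $\xi_{1/2}$ and $\xi_{3/2}$ interchanged.
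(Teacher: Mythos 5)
Your proposal is correct and follows essentially the same route as the paper, which itself only says the weight $3/2$ case is proved ``in the same way as the weight $1/2$ case'': the vanishing $H_{3/2}^{1/2}=M_{3/2}^{+}=\{0\}$ via $\theta(z)^3$ failing the plus-condition, the collapse $H_{3/2}^{r+1/2}=H_{3/2}^{r}$, and the dimension count through the kernel of $\Delta_{3/2}$ combined with the eigenvalue recurrence for $P_{3/2,0}(z,s)$ at $s=3/4$. In fact you have supplied more detail (the explicit relation $\Delta_{3/2}^{j}G_{3/2,0,j}=(-1/2)^{j}G_{3/2,0,0}$ forcing linear independence) than the paper records.
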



\subsection{Weight $k \leq -1/2$ or $5/2 \leq k$ cases}

For $k \geq 5/2$, the function
\begin{align*}
	P_{k,0}(z,s) &= \mathrm{pr}_k^+(\mathscr{P}_{k,0}(z,s))\\
	&= \mathrm{pr}_k^+\biggl(\sum_{\gamma \in \Gamma_0(4)_{\infty} \backslash \Gamma_0(4)} y^{s-k/2}\big|_k \gamma \biggr)
\end{align*}
at $s=k/2$ is known as Cohen's Eisenstein series. (see \cite[Proposition 6 in Section IV-2]{Kob}). Then we see that $H_k^{1/2}$ is spanned by $G_{k,0,0}(z) = P_{k,0}(z, k/2)$ and holomorphic cusp forms $S_k$. As for the case of $k \leq -1/2$, there is no holomorphic modular form of weight $k$. The remaining proof is the same as above. As for a cusp form $f \in S_k$, by \cite[Section 6.3]{LR} or \cite[Proposition 5.13]{BFOR}, a preimage $F$ such that $\xi_{2-k}F = f$ is not in $H_{2-k}^1$ but in $H_{2-k}^{1,!}$.


\section{Duke-Jenkins basis}\label{s5}

Duke-Jenkins \cite{DJ2, DJ} constructed a standard basis for the space $H_k^{1/2,!} = M_k^!$ of weakly holomorphic modular forms. For $k = \lambda_k + 1/2$ with an integer $\lambda_k \in \mathbb{Z}$, we define an integer $\ell_k$ by $2\lambda_k = 12\ell_k + k'$ where $k' \in \{0, 4, 6, 8, 10, 14\}$. Moreover we put
\begin{align*}
	A_k := \left\{\begin{array}{ll}
		2\ell_k - (-1)^{\lambda_k} &\text{if } \ell_k \text{ is odd},\\
		2\ell_k &\text{if } \ell_k \text{ is even}.\\
	\end{array} \right.
\end{align*}
For each integer $m \geq -A_k$ satisfying $(-1)^{\lambda_k-1}m \equiv 0, 1 (4)$, there exists the unique weakly holomorphic modular form $f_{k,m}(z)$ with Fourier expansion of the form
\[
	f_{k,m}(z) = q^{-m} + \sum_{\substack{n > A_k\\ (-1)^{\lambda_k}n \equiv 0, 1 (4)}}a_k(m,n)q^n.
\]
In particular, these coefficients satisfy the duality relation
\[
	a_k(m,n) = -a_{2-k}(n,m).
\]
Then Duke-Jenkins showed that the set $\{ f_{k,m}(z)\ |\ m \geq -A_k, (-1)^{\lambda_k-1} m \equiv 0, 1 (4)\}$ is a basis for $H_k^{1/2,!}$.\\

On the other hand, the functions $F_{k,m,r}(z)$ and $G_{k,m,r}(z)$ with suitable $m$ and $r$ also span the space $H_k^{1/2,!}$. In particular for $k \geq 3/2$, we recall Petersson's following work.

\begin{thm}\cite[Satz 3, special case, modified version]{P}\label{Lemma 5.1}
	Let $k \in \frac{1}{2} \mathbb{Z}$ with $k \geq 3/2$ and $\mathcal{I}$ be a finite set of positive integers. Then
	\begin{align*}
		\sum_{m \in \mathcal{I}} \overline{\alpha_m} G_{k,m,0}(z) \equiv 0
	\end{align*}
	if and only if there exists a weakly holomorphic modular form of weight $2-k$ with principal part at $\infty$ equal to
	\begin{align*}
		\sum_{m \in \mathcal{I}}\frac{\alpha_m}{m^{k-1}}q^{-m}.
	\end{align*}
\end{thm}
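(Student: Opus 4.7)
The plan is to invoke Petersson's classical duality between Poincar\'{e} series and weakly holomorphic forms of complementary weight. First, for $k \geq 5/2$ and $m \geq 1$, using the special value $M_{k/2,(k-1)/2}(y) = y^{k/2} e^{-y/2}$, the seed function simplifies to $\varphi_{k,m}(z,k/2) = \Gamma(k)^{-1} q^m$, and hence
\[
G_{k,m,0}(z) = \Gamma(k)^{-1}\,\mathrm{pr}_k^+\biggl(\sum_{\gamma \in \Gamma_0(4)_\infty \backslash \Gamma_0(4)} (q^m|_k\gamma)(z)\biggr),
\]
which is (up to the nonzero factor $\Gamma(k)^{-1}$) the classical holomorphic Poincar\'{e} series of weight $k$ on $\Gamma_0(4)$ in the plus-space with seed $q^m$. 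For $k = 3/2$, the same computation produces the analytic continuation of this Poincar\'{e} series, which is a harmonic Maass form whose holomorphic principal part is $\Gamma(3/2)^{-1} q^m$.

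Second, I would apply the Petersson inner-product formula obtained via the unfolding trick: for any cusp form $f(z) = \sum_n a_f(n) q^n$ in the weight-$k$ plus-space,
\[
\langle f, G_{k,m,0}\rangle = \frac{\Gamma(k-1)}{\Gamma(k)\,(4\pi m)^{k-1}\,\mathrm{vol}(\Gamma_0(4)\backslash \mathfrak{H})}\, a_f(m),
\]
with the analogous regularised pairing when $k=3/2$. Setting $F := \sum_{m \in \mathcal{I}} \overline{\alpha_m} G_{k,m,0}$, the vanishing $F \equiv 0$ is therefore equivalent to $\sum_m \alpha_m a_h(m)/m^{k-1} = 0$ for every $h$ in the weight-$k$ plus-space of holomorphic modular forms; the cuspidal part is handled by the Petersson formula above, and the finite-dimensional Eisenstein contribution by a direct Fourier-coefficient computation from Proposition \ref{Theorem 4.4}.

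Third, by Zagier/Borcherds duality---the residue theorem applied to $g(z) h(z)\,dz$ for $h$ a holomorphic weight-$k$ form and a hypothetical $g$ of weight $2-k$---a prescribed principal part $\sum_m \beta_m q^{-m}$ is realised by some weakly holomorphic $g$ of weight $2-k$ if and only if $\sum_m \beta_m a_h(m) = 0$ for every such $h$. Taking $\beta_m = \alpha_m / m^{k-1}$ matches this exactly to the orthogonality condition of the previous paragraph, giving both directions of the theorem. The main obstacle is the $k = 3/2$ case: because $G_{3/2,m,0}$ is harmonic rather than holomorphic, the Petersson inner product requires Borcherds-style regularisation (or equivalently a holomorphic-projection argument), and the Eisenstein contribution has to be tracked through Zagier's mock-modular series $E_{3/2} = -\tfrac{1}{12} G_{3/2,0,0}$ recalled in Section \ref{s4.2}.
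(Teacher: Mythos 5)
Your route is genuinely different from the paper's. The paper's proof is a three\--line computation: since $G_{k,m,-1}(z)=0$ for $k\ge 3/2$ and $m>0$, the recurrence (\ref{xirec1}) gives
\[
\sum_{m\in\mathcal{I}}\overline{\alpha_m}\,G_{k,m,0}(z)=\frac{(4\pi)^{1-k}}{k-1}\,\xi_{2-k}\Bigl(\sum_{m\in\mathcal{I}}\frac{\alpha_m}{m^{k-1}}F_{2-k,-m,0}(z)\Bigr),
\]
so the vanishing of the left side says exactly that this explicit preimage---a harmonic Maass form whose exponentially growing part is $\sum_m \alpha_m m^{1-k}q^{-m}$---is killed by $\xi_{2-k}$, i.e.\ is weakly holomorphic with the prescribed principal part. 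You instead reconstruct Petersson's original duality (which is why the paper only cites Satz~3): identify $G_{k,m,0}$ with $\Gamma(k)^{-1}$ times the classical holomorphic Poincar\'e series (your special value $M_{k/2,(k-1)/2}(y)=y^{k/2}e^{-y/2}$ and the unfolding computation are correct for $k\ge 5/2$), and match the resulting orthogonality condition against the Bruinier--Funke/Borcherds criterion for realizability of a principal part in weight $2-k$. The skeleton is viable and it is instructive that the two arguments are the same duality in different clothing, but the paper's version is shorter, needs no inner products, and treats $k=3/2$ and $k\ge 5/2$ uniformly.

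Two points in your write-up need repair. First, both of your intermediate equivalences must be tested against \emph{cusp} forms only, not against the whole plus-space of holomorphic forms: each $G_{k,m,0}$ with $m\ge 1$, $k\ge 5/2$ is itself a cusp form, so $F\equiv 0$ iff $\langle h,F\rangle=0$ for all $h\in S_k^+$, and the residue-theorem obstruction to a prescribed principal part likewise involves only $S_k^+$ (the free constant term of the weight-$(2-k)$ form absorbs the Eisenstein condition). As literally stated your equivalences fail: for $k=5/2$ one has $S_{5/2}^+=\{0\}$, so every $G_{5/2,m,0}$ vanishes identically and every admissible principal part is realizable in weight $-1/2$ (consistent with $A_{-1/2}=-1$ in Section \ref{s5}), yet $\sum_m\alpha_m a_h(m)m^{-3/2}$ is generically nonzero for $h$ the Cohen--Eisenstein series spanning $M_{5/2}^+$. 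The two misstatements are parallel, so the theorem survives once you restrict to $S_k^+$ on both sides, but the chain of equivalences as written is broken. Second, the case $k=3/2$ is flagged as ``the main obstacle'' and then not carried out; there the series only exists by analytic continuation, and the needed input is the fact (recorded in Section \ref{s5} from Jeon--Kang--Kim) that $G_{3/2,m,0}\equiv 0$ for all $m>0$, which together with $S_{3/2}^+=\{0\}$ makes both sides of the asserted equivalence vacuously true; your sketch would still have to establish this, whereas the $\xi$-operator computation covers it with no extra work.
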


\begin{proof}
	Here we prove the case of $k \in \mathbb{Z} + 1/2$. Similarly we can show it for $k \in \mathbb{Z}$. Since $G_{k,m,-1}(z) = 0$ for any $k \geq 3/2$ and $m >0$, by (\ref{xirec1}) we have
	\begin{align*}
		0 &= \sum_{m \in \mathcal{I}}\overline{\alpha_m}G_{k,m,0}(z) = \sum_{m \in \mathcal{I}}\xi_{2-k}\biggl(\alpha_m \frac{(4\pi m)^{1-k}}{k-1}F_{2-k,-m,0}(z)\biggr)\\
		&= \frac{(4\pi)^{1-k}}{k-1}\xi_{2-k} \bigg( \sum_{m \in \mathcal{I}} \frac{\alpha_m}{m^{k-1}} F_{2-k,-m,0}(z)\bigg).
	\end{align*}
	Thus the inner sum is a weakly holomorphic modular form of weight $2-k$ having the Fourier expansion of the form
	\[
		\sum_{m \in \mathcal{I}} \frac{\alpha_m}{m^{k-1}} q^{-m}.
	\]
	This concludes the proof.
\end{proof}

From now on, we reveal the relations between Duke-Jenkins' functions $f_{k,m}(z)$ and our functions $F_{k,m,r}(z)$ and $G_{k,m,r}(z)$. First, let $k = \lambda_k+1/2 \leq -1/2$ and $m>0$. Since $P_{k,m}(z,s)$ converges in $\mathrm{Re}(s) > 1$, we immediately see that
\begin{align*}
	F_{k,-m,0}(z) &= P_{k,-m}(z,1-k/2)\\
	&= \varphi_{k,-m}(z,1-k/2) + \sum_{(-1)^{\lambda_k}n \equiv 0,1 (4)}b_{k,-m}(n,1-k/2)\mathcal{W}_{k,n}(y,1-k/2)e^{2\pi inx}\\
	&= q^{-m}-\frac{\Gamma(1-k,4\pi my)}{\Gamma(1-k)}q^{-m}+ \sum_{(-1)^{\lambda_k}n \equiv 0,1 (4)}b_{k,-m}(n,1-k/2)\mathcal{W}_{k,n}(y,1-k/2)e^{2\pi inx}.
\end{align*}
Here we use the facts (\ref{useful1}), (\ref{useful2}), (\ref{MMW}), and $W_{\mu, -\nu}(y) = W_{\mu, \nu}(y)$. Comparing with the Duke-Jenkins basis $f_{k,m}(z) = q^{-m} + \sum_{\substack{n > A_k\\(-1)^{\lambda_k}n \equiv 0,1 (4)}}a_k(m,n)q^n$, for $k \leq -1/2$ and $m \geq -A_k >0$, we see that 
\[
f_{k,m}(z) - \biggl\{ F_{k,-m,0}(z) + \sum_{\substack{A_k < n < 0\\(-1)^{\lambda_k}n \equiv 0,1(4)}}a_k(m,n)F_{k,n,0}(z)\biggr\}
\]
is a harmonic function and bounded on the upper half plane $\mathfrak{H}$. Thus this difference is a constant, that is, equal to $0$. \\

Next we consider the case of $k = \lambda_k + 1/2 \geq 5/2$. By Theorem \ref{Lemma 5.1} and an easy remark
\begin{align*}
	A_k + A_{2-k} = \left\{\begin{array}{ll}
		-1 &\text{if } k = \lambda_k + 1/2 \text{ with } \lambda_k \equiv \ell_k (2),\\
		-3 &\text{if } k = \lambda_k + 1/2 \text{ with } \lambda_k \not\equiv \ell_k (2),\\
	\end{array} \right.
\end{align*}
we see that $\{G_{k,m,0}(z)\ |\ 0 < m \leq A_k\}$ is a basis for $S_k$. As for $k \geq 5/2$ and $-m < 0$,
\begin{align*}
	G_{k,-m,0}(z) &= P_{k,-m}(z,k/2)\\
	&= \varphi_{k,-m}(z,k/2) + \sum_{(-1)^{\lambda_k}n \equiv 0,1 (4)}b_{k,-m}(n,k/2)\mathcal{W}_{k,n}(y,k/2)e^{2\pi inx}\\
	&= \frac{1}{\Gamma(k)}\biggl\{q^{-m} + \sum_{(-1)^{\lambda_k}n \equiv 0,1 (4), n>0}n^{k-1}b_{k,-m}(n,k/2)q^n\biggr\}.
\end{align*}
Similarly we have that
\[
	f_{k,m}(z) - \Gamma(k)G_{k,-m,0}(z)
\]
is a holomorphic cusp form for weight $k \geq 5/2$. For $m=0$, we have
\begin{align*}
	G_{k,0,0}(z) &= P_{k,0}(z,k/2)\\
	&= 1 + \sum_{(-1)^{\lambda_k}n \equiv 0,1 (4), n>0}b_{k,0}(n,k/2)\frac{n^{k-1}}{\Gamma(k)}q^n,
\end{align*}
that is, $f_{k,0}(z) - G_{k,0,0}(z)$ is a holomorphic cusp form.\\

Finally, we consider the cases of $k = 1/2$ and $3/2$ separately. In these cases, the coefficients $b_{k,m}(n,s)$ has a possible pole at $s=3/4$. For $k=1/2$, Duke-Imamo\={g}lu-T\'{o}th showed

\begin{lmm}\cite[Section 2]{DIT2}\label{lemma5.2}
	Let $m \equiv 0,1(4)$.
	\begin{enumerate}
		\item If $m=0$, then $F_{1/2,0,-1}(z) = \frac{3}{4\pi}\theta(z) \in M_{1/2}$.
		\item If $m<0$, then $F_{1/2,m,0}(z) \in M_{1/2}^!$.
	\end{enumerate}
\end{lmm}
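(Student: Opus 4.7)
Part (1) is immediate from the identity
\[
\lim_{s \to 3/4}\Bigl(P_{1/2,0}(z,s) - \frac{3\theta(z)/(4\pi)}{s-3/4}\Bigr) = F_{1/2,0,0}(z),
\]
cited from Duke--Imamo\={g}lu--T\'{o}th in the paragraph preceding this lemma: the residue at $s=3/4$ is precisely the coefficient $F_{1/2,0,-1}(z)$ in the Laurent expansion (\ref{FGdef}), and this residue equals $\tfrac{3}{4\pi}\theta(z)\in M_{1/2}$.

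For Part (2), since $m<0$ is not of the form $n^2$ with $n\geq 0$, the exception recalled in Section \ref{s1} does not apply and $F_{1/2,m,-1}(z)=0$; hence $P_{1/2,m}(z,s)$ is regular at $s=3/4$ and $F_{1/2,m,0}(z)=P_{1/2,m}(z,3/4)$. Modularity, the plus-condition, and at-most-exponential growth at the three cusps of $\Gamma_0(4)$ are inherited from the Maass--Poincar\'{e} series (the transfer to the cusps $0,1/2$ is automatic under the plus-condition, as noted in Section \ref{s1}). The only remaining point is to show that $F_{1/2,m,0}$ is holomorphic on $\mathfrak{H}$, equivalently that $\xi_{1/2}F_{1/2,m,0}=0$.

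The plan is to reduce this to a vanishing statement for a weight $3/2$ Poincar\'{e} series. Applying (\ref{xirec1}) at $k=1/2$, $r=0$ and using $G_{3/2,-m,-1}(z)=0$ yields
\[
\xi_{1/2}F_{1/2,m,0}(z) \;=\; 2\pi^{1/2}|m|^{1/2}\,G_{3/2,-m,0}(z) \;=\; 2\pi^{1/2}|m|^{1/2}\,P_{3/2,-m}(z,3/4),
\]
so it suffices to show $P_{3/2,-m}(z,3/4)=0$ when $-m>0$. Inspection of the definition (\ref{mathcalW}) reveals that the denominator of $\mathcal{W}_{3/2,l}(y,s)$ contains $\Gamma(s-3/4)$ for $l<0$ and $\Gamma(s-3/4)\Gamma(s+3/4)$ for $l=0$; the pole of $\Gamma(\,\cdot\,)$ at $0$ forces $\mathcal{W}_{3/2,l}(y,3/4)=0$ for every $l\leq 0$. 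Substituting into Proposition \ref{Theorem 4.4}, the Fourier expansion of $P_{3/2,-m}(z,3/4)$ at $i\infty$ collapses to a genuinely holomorphic series $\Gamma(3/2)^{-1}q^{-m}+\sum_{l>0,\,l\equiv 0,3(4)}c_l\,q^l$. Together with the plus-condition and the growth-transfer to the cusps $0,1/2$, this places $P_{3/2,-m}(z,3/4)$ in the Kohnen plus-space $M_{3/2}^+(\Gamma_0(4))$, which is trivial as a consequence of the Kohnen--Zagier isomorphism with $M_2(\mathrm{SL}_2(\mathbb{Z}))=\{0\}$. Hence $\xi_{1/2}F_{1/2,m,0}=0$, concluding Part (2).

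The main obstacle is justifying that the vanishing of $\mathcal{W}_{3/2,l}(y,3/4)$ for $l\leq 0$ together with the plus-condition really forces $P_{3/2,-m}(z,3/4)$ into the \emph{holomorphic} plus-space, rather than only into a harmonic one. If this step proves too cavalier, the fallback is to compute the coefficients $b_{3/2,-m}(l,3/4)$ explicitly from Proposition \ref{Theorem 4.4} using $J_{1/2}(x)=\sqrt{2/(\pi x)}\sin(x)$ together with the generalized Kloosterman sums $\tilde{K}_{3/2}$, and to exhibit the cancellation term by term, as Duke--Imamo\={g}lu--T\'{o}th do in \cite[Section 2]{DIT2}.
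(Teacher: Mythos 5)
The paper offers no proof of this lemma; it is imported from \cite[Section 2]{DIT2}, so your argument must stand on its own. Part (1) is fine: it is just a restatement of the limit formula quoted at the start of Section \ref{s4}, whose residue at $s=3/4$ is by definition $F_{1/2,0,-1}$. In Part (2), the reduction via (\ref{xirec1}) and $G_{3/2,-m,-1}=0$ to the single claim $P_{3/2,-m}(z,3/4)=0$ for $-m>0$ is sound (minor slip: the constant is $(4\pi|m|)^{1/2}\cdot\tfrac12=\sqrt{\pi|m|}$, not $2\sqrt{\pi}|m|^{1/2}$), and your identification of $M_{3/2}^{+}=\{0\}$ is acceptable, though the paper gets it from $M_{3/2}(\Gamma_0(4))=\mathbb{C}\theta^3$ and the failure of the plus-condition for $\theta^3$.

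The genuine gap is the step where you conclude that the expansion ``collapses to a genuinely holomorphic series'' because $\mathcal{W}_{3/2,l}(y,3/4)=0$ for $l\le 0$. The Fourier terms are the products $b_{3/2,-m}(l,s)\,\mathcal{W}_{3/2,l}(y,s)$, and Section \ref{s5} of the paper explicitly warns that for $k=1/2,3/2$ the coefficients $b_{k,m}(n,s)$ may have a pole at $s=3/4$. A simple pole of $b$ against the simple zero of $\mathcal{W}$ leaves a finite \emph{non-holomorphic} term (an incomplete-Gamma term for $l<0$, a $y^{-1/2}$ term for $l=0$). This is not a hypothetical failure mode: it is exactly the mechanism by which $G_{3/2,m,0}$ for $-m$ a nonzero square acquires the non-holomorphic Eisenstein contribution $\tfrac{4}{\sqrt{\pi}}G_{3/2,0,0}=-\tfrac{48}{\sqrt{\pi}}E_{3/2}$, and by which $F_{1/2,0,0}$ ends up sesquiharmonic rather than harmonic; your argument as written does not distinguish those cases from the one at hand and would therefore prove false statements. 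What must be supplied is the regularity of $b_{3/2,-m}(l,s)$ at $s=3/4$ for all $l\le 0$ when $-m>0$: via Lemma \ref{bkm} and (\ref{key})--(\ref{key2}) this reduces to the facts that $G_n(z,s)$ is regular at $s=1$ for $n\ne 0$ and that, for $d=m<0$, the pole of $\mathrm{Tr}_{m,1}(E(z,s))$ at $s=1$ is cancelled by the pole of $\zeta(s)$. That computation is the actual content of the cited result, so your ``fallback'' is not a fallback but the proof itself; until it is carried out, Part (2) is not established.
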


In the same way, for $m>0$ we see that
\[
	f_{1/2,m}(z) = F_{1/2,-m,0}(z),
\]
and $f_{1/2,0}(z) = \theta(z) = \frac{4\pi}{3}F_{1/2,0,-1}(z)$. As for $k = 3/2$, Jeon-Kang-Kim showed the following.

\begin{lmm}\cite[Proposition 5.1]{JKK}
	Let $m \equiv 0,3 (4)$.
	\begin{enumerate}
		\item If $m >0$, then $G_{3/2,m,0}(z) = 0$.
		\item If $m=0$, then $G_{3/2,0,0}(z) = -12E_{3/2}(z) \in H_{3/2}^1$.
		\item If $m<0$ and $-m$ is not a square, then $G_{3/2,m,0}(z) \in M_{3/2}^!$.
		\item If $m<0$ and $-m$ is a non-zero square, then
		\begin{align*}
			G_{3/2,m,0}(z) - \frac{4}{\sqrt{\pi}}G_{3/2,0,0}(z) \in M_{3/2}^!,
		\end{align*}
	\end{enumerate}
	where $E_{3/2}(z)$ is Zagier's Eisenstein series given in Section $\ref{s4.2}$.
\end{lmm}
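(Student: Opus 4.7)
The plan is to substitute $s=3/4$ into the Fourier expansion of $P_{3/2,m}(z,s)$ from Proposition \ref{Theorem 4.4} and sort out which contributions remain holomorphic and which survive as non-holomorphic pieces. Using the series representation of Whittaker functions one checks the closed forms $M_{3/4,1/4}(y)=y^{3/4}e^{-y/2}$, $M_{-3/4,1/4}(y)=y^{3/4}e^{y/2}$, and $W_{3/4,1/4}(y)=y^{3/4}e^{-y/2}$. Four structural observations follow: (i) the input term $\varphi_{3/2,m}(z,3/4)$ collapses to $\Gamma(3/2)^{-1}q^m$ for every $m\neq 0$; (ii) each index $n>0$ contributes the holomorphic piece $\Gamma(3/2)^{-1}\sqrt{n}\,b_{3/2,m}(n,3/4)\,q^n$; (iii) each index $n<0$ is killed unconditionally by the simple zero of $\Gamma(s-3/4)^{-1}$ in $\mathcal{W}_{3/2,n}(y,s)$ at $s=3/4$; (iv) the index $n=0$ likewise vanishes unless a compensating simple pole is supplied by $b_{3/2,m}(0,s)$, since $\mathcal{W}_{3/2,0}(y,s)$ carries the same $\Gamma(s-3/4)^{-1}$ in its denominator by (\ref{mathcalW}).

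For part (1), $m>0$: observations (i)--(iv) together with the regularity of $b_{3/2,m}(0,s)$ at $s=3/4$ (reduced via Lemma \ref{bkm} to $b_{1/2,0}(-m,s)$, which inherits no pole from (\ref{Ibu}) because the pole there triggers only at the $d=m=0$ input) show that $G_{3/2,m,0}(z)$ is a holomorphic cusp form in the plus space at weight $3/2$. Apply Theorem \ref{Lemma 5.1} with $\mathcal{I}=\{m\}$ and $\alpha_m=m^{1/2}$: the vanishing of $G_{3/2,m,0}(z)$ is equivalent to the existence in the plus space of $M_{1/2}^!$ of a weakly holomorphic form with principal part $q^{-m}$. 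Such a form is provided by the Duke--Jenkins basis element $f_{1/2,m}(z)$, available because $m\equiv 0,3\pmod 4$ forces $-m\equiv 0,1\pmod 4$, which matches the weight $1/2$ plus condition. For part (2), $m=0$: matching the Kloosterman--Bessel expansion of $P_{3/2,0}(z,3/4)$ term by term against Zagier's explicit formula for $E_{3/2}(z)$ gives the identification $G_{3/2,0,0}(z)=-12\,E_{3/2}(z)$, with the non-holomorphic constant term arising precisely from the $\zeta(4s-2)$-pole in (\ref{Ibu}) at $s=3/4$.

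For parts (3) and (4), $m<0$: by (i)--(iii) only the $n=0$ term can obstruct holomorphy. Via Lemma \ref{bkm} combined with the symmetry $b_{1/2,-m}(0,s)=b_{1/2,0}(-m,s)$ and (\ref{key2}), the analytic behaviour of $b_{3/2,m}(0,s)$ at $s=3/4$ is controlled by the $\zeta(s)^{-1}=L_1(s)^{-1}$ factor paired against modified traces of Niebur--Poincar\'e series at $s=1$. When $-m$ is not a perfect square this combination is regular, the $n=0$ term vanishes, and $G_{3/2,m,0}(z)\in M_{3/2}^!$, proving (3). When $-m=j^2$ for some $j\geq 1$, a residue survives and produces a non-holomorphic piece proportional to the non-holomorphic part of $E_{3/2}(z)=-\tfrac{1}{12}G_{3/2,0,0}(z)$, which is cancelled by subtracting the appropriate multiple of $G_{3/2,0,0}(z)$. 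The main obstacle is this final residue calculation: one must carefully track the normalizing powers of $\pi$, $2$, and $j$ arising from (\ref{Ibu}), (\ref{mathcalW}), and Lemma \ref{bkm} to verify that the $j$-dependence of the residue cancels exactly and that the resulting scalar is independent of $j$ and equal precisely to $\tfrac{4}{\sqrt{\pi}}$, as claimed in (4).
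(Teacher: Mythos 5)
The paper does not prove this lemma at all; it is quoted verbatim from \cite[Proposition 5.1]{JKK}, so your attempt is necessarily a from-scratch reconstruction. Unfortunately it contains a genuine structural error.

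Your observation (iii) --- that every index $n<0$ is ``killed unconditionally'' by the simple zero of $\Gamma(s-3/4)^{-1}$ in $\mathcal{W}_{3/2,n}(y,s)$ --- is false, and the error propagates into your treatment of parts (2) and (4). The zero of $\mathcal{W}_{3/2,n}(y,s)$ at $s=3/4$ only annihilates the term when $b_{3/2,m}(n,s)$ is regular there. By Lemma \ref{bkm}, $b_{3/2,m}(n,s)$ is a multiple of $b_{1/2,-m}(-n,s)$, and the latter has a simple pole at $s=3/4$ precisely when both $-m$ and $-n$ are nonnegative perfect squares (the residue of $P_{1/2,j^2}(z,s)$ at $s=3/4$ is a multiple of $\theta(z)$, whose coefficients are supported on squares). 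When $-m=j^2$, the simple pole of $b_{3/2,m}(-l^2,s)$ cancels the simple zero of $\mathcal{W}_{3/2,-l^2}(y,s)$ for every $l\geq 1$, leaving a nonzero non-holomorphic term proportional to $y^{-3/4}W_{-3/4,1/4}(4\pi l^2 y)q^{-l^2}$, i.e.\ to $y^{-1/2}\beta(4\pi l^2 y)q^{-l^2}$. This is exactly why the non-holomorphic part of $E_{3/2}(z)$ is $\frac{1}{16\pi\sqrt{y}}\sum_{n\in\mathbb{Z}}\beta(4\pi n^2 y)q^{-n^2}$, supported at \emph{all} nonpositive square indices, not only at $n=0$. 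Your claim in part (4) that ``only the $n=0$ term can obstruct holomorphy'' is therefore wrong, and computing only the $n=0$ residue cannot establish that subtracting $\frac{4}{\sqrt{\pi}}G_{3/2,0,0}(z)$ removes the entire non-holomorphic part: you must match the residues at every $n=-l^2$, $l\geq 0$, against the corresponding coefficients of $E_{3/2}(z)$ simultaneously (the ratio $1:2$ between $l=0$ and $l\geq 1$ coming from the coefficients of $\theta$ is what makes a single scalar $\frac{4}{\sqrt{\pi}}$ suffice). The same inconsistency already appears in your part (2): if (iii) held, $G_{3/2,0,0}(z)$ could have a non-holomorphic contribution only at the constant term, contradicting the identification with $-12E_{3/2}(z)$ that you assert.

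The remainder is sound: the closed forms $M_{\pm 3/4,1/4}(y)=y^{3/4}e^{\mp y/2}$ and the resulting $\varphi_{3/2,m}(z,3/4)=\Gamma(3/2)^{-1}q^m$ are correct; part (1) via Theorem \ref{Lemma 5.1} applied to $f_{1/2,m}$ is a legitimate route (one can also just note that the plus space $M_{3/2}^{+}$ is trivial); and part (3) is fine because for $-m>0$ not a square no coefficient $b_{3/2,m}(n,s)$ has a pole at $s=3/4$. To repair the argument you need to replace (iii) by the precise pole criterion above and redo the residue computation for all negative square indices in case (4).
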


More precisely, for $m<0$ but $-m \neq \square$, 
\[
	G_{3/2,m,0}(z) = \frac{2}{\sqrt{\pi}}\biggl(q^m + O(q) \biggr),
\]
and for $-m = \square$,
\[
	G_{3/2,m,0}(z) - \frac{4}{\sqrt{\pi}}G_{3/2,0,0}(z) = \frac{2}{\sqrt{\pi}}\biggl(q^m - 2 + O(q)\biggr).
\]

In conclusion, we obtain the following proposition.

\begin{prp}
	For $k = \lambda_k + 1/2$ with $\lambda_k \in \mathbb{Z}$, we define an integer $\ell_k$ by $2\lambda_k = 12\ell_k+k'$ where $k' \in \{0, 4, 6, 8, 10, 14\}$. Moreover we put 
\begin{align*}
	A_k := \left\{\begin{array}{ll}
		2\ell_k - (-1)^{\lambda_k} &\text{if } \ell_k \text{ is odd},\\
		2\ell_k &\text{if } \ell_k \text{ is even}.
	\end{array} \right.
\end{align*}
For each integer $m \geq -A_k$ with $(-1)^{\lambda_k-1}m \equiv 0,1 (4)$, the unique weakly holomorphic modular form $f_{k,m}(z) = q^{-m} + \sum_{\substack{n>A_k\\(-1)^{\lambda_k}n \equiv 0,1(4)}}a_k(m,n)q^n$ is expressed in terms of the functions $F_{k,m,r}(z)$, $G_{k,m,r}(z)$ as follows.
\begin{enumerate}
	\item For $k \leq -1/2$,
	\begin{align*}
		f_{k,m}(z) = F_{k,-m,0}(z) + \sum_{\substack{A_k < n < 0\\(-1)^{\lambda_k}n \equiv 0,1(4)}}a_k(m,n)F_{k,n,0}(z).
	\end{align*}
	\item For $k = 1/2$ and $m>0$, $f_{1/2,m}(z) = F_{1/2,-m,0}(z)$, and $f_{1/2,0}(z) = \frac{4\pi}{3}F_{1/2,0,-1}(z) = \theta(z)$.
	\item For $k=3/2$, 
	\begin{enumerate}
		\item If $m$ is not a square, then $f_{3/2,m}(z) = \dfrac{\sqrt{\pi}}{2}G_{3/2,-m,0}(z)$.
		\item If $m$ is a non-zero square, then
		\begin{align*}
			f_{3/2,m}(z) = \frac{\sqrt{\pi}}{2}\biggl(G_{3/2,-m,0}(z) - \frac{4}{\sqrt{\pi}}G_{3/2,0,0}(z)\biggr).
		\end{align*}
	\end{enumerate}
	\item For $k \geq 5/2$, the set $\{G_{k,m,0}(z)\ |\ 0 < m \leq A_k\}$ is a basis for the space $S_k$ of holomorphic cusp forms.
	\begin{enumerate}
		\item For $m>0$, $f_{k,m}(z) - \Gamma(k)G_{k,-m,0}(z)$ is a holomorphic cusp form.
		\item For $m=0$, $f_{k,0}(z) - G_{k,0,0}(z)$ is a holomorphic cusp form.
		\item For $m<0$, $f_{k,m}(z)$ is a holomorphic cusp form. 
	\end{enumerate}
\end{enumerate}
\end{prp}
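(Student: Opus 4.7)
The bulk of this proposition is already built up in the preceding discussion of Section 5, so my plan is to assemble those computations into a case-by-case verification, leaning on Duke--Jenkins' uniqueness and on Petersson's duality (Theorem \ref{Lemma 5.1}) at the two points where new content is required.

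For part (1), with $k \leq -1/2$, I would use the Fourier expansion of $P_{k,-m}(z,1-k/2)$ from Proposition \ref{Theorem 4.4} to observe that $F_{k,-m,0}(z)$ equals $q^{-m}$ plus an incomplete-gamma tail and a Whittaker Fourier series supported on indices $n$ with $(-1)^{\lambda_k}n \equiv 0,1\,(4)$. Because each $F_{k,n,0}(z)$ with $A_k < n < 0$ has leading term $q^n$ at its corresponding gap index, the linear combination on the right-hand side matches the defining $q$-expansion $f_{k,m}(z) = q^{-m} + \sum_{n > A_k} a_k(m,n)q^n$ on the entire support $\{-m\} \cup (A_k, \infty)$. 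The difference lies in $H_k^{1/2,!}$, is harmonic, and is bounded on $\mathfrak{H}$; by Theorem \ref{Theorem 4.1}(1) it must vanish. Part (2) follows immediately from Lemma \ref{lemma5.2} combined with Duke--Jenkins uniqueness applied to the principal part $q^{-m}$, and from the identity $F_{1/2,0,-1}(z) = (3/4\pi)\theta(z)$ recalled in Section \ref{s4}. Part (3) is a direct restatement of the Jeon--Kang--Kim lemma quoted above: dividing by $\sqrt{\pi}/2$ yields a weakly holomorphic form whose principal part matches $q^{-m}$, with the square case requiring the explicit subtraction of $(4/\sqrt{\pi})G_{3/2,0,0}(z)$ to cancel the constant term $-2$.

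For part (4), with $k \geq 5/2$, the non-trivial content is the basis claim for $S_k$. Suppose $\sum_{0 < m \leq A_k,\,(-1)^{\lambda_k}m\equiv 0,1(4)} \overline{\alpha_m} G_{k,m,0}(z) = 0$. Theorem \ref{Lemma 5.1} produces $\phi \in M_{2-k}^!$ whose principal part at $\infty$ equals $\sum_m (\alpha_m/m^{k-1})q^{-m}$, supported in $1 \leq m \leq A_k$. On the other hand, the Duke--Jenkins basis $\{f_{2-k,n}\,|\,n \geq -A_{2-k}\}$ shows that any principal part of a form in $M_{2-k}^!$ is a linear combination of $q^{-n}$ with $-n \leq A_{2-k}$, i.e.\ $n \geq -A_{2-k}$. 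Using the remark $A_k + A_{2-k} \in \{-1,-3\}$ one has $-A_{2-k} = A_k + 1$ or $A_k + 3$, so no admissible principal part in the range $1 \leq m \leq A_k$ is available; hence $\phi \equiv 0$ and all $\alpha_m$ vanish. A parallel count (using Duke--Jenkins' basis to compute $\dim S_k$ as the number of $m$ with $0 < m \leq A_k$ satisfying $(-1)^{\lambda_k}m\equiv 0,1\,(4)$) shows the cardinality is exactly $\dim S_k$, so we have a basis. The three identifications of $f_{k,m}$ modulo $S_k$ are then read off from the $q$-expansion computation of $G_{k,-m,0}(z) = P_{k,-m}(z,k/2)$ that precedes the proposition, together with the observation that a weakly holomorphic form of weight $k \geq 5/2$ with prescribed principal part is determined modulo $S_k$.

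The main obstacle is the linear-independence step in part (4): it is the only place where a genuinely new argument (as opposed to rewriting already-computed Fourier expansions) is required, and it rests on the precise arithmetic of $A_k + A_{2-k}$ together with Petersson's duality. Every other assertion reduces to matching principal parts against a Duke--Jenkins basis element whose existence and uniqueness have been established in the cited works.
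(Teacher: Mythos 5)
Your proposal is correct and follows essentially the same route as the paper: the proposition is assembled from the Fourier expansions of $P_{k,\pm m}(z,s)$ at the special values $s=1-k/2$ and $s=k/2$, matched against the Duke--Jenkins principal parts, with the cases $k=1/2,3/2$ quoted from Duke--Imamo\={g}lu--T\'{o}th and Jeon--Kang--Kim, and the basis claim for $S_k$ resting on Petersson's duality theorem together with the identity $A_k+A_{2-k}\in\{-1,-3\}$, exactly as you describe. The only slip is cosmetic: in part (1) the difference is a priori only a bounded \emph{harmonic} form rather than an element of $H_k^{1/2,!}$, and the paper disposes of it by observing that such a form must be a constant, hence zero in negative weight.
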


\begin{rmk*}
	For $k \geq 5/2$, we can also express the Duke-Jenkins basis in terms of our functions $G_{k,m,r}(z)$ explicitly. For example, Jeon-Kang-Kim \cite{JKK2} gave such expression by using the Petersson inner product. 
\end{rmk*}


\section{Proof of Theorem \ref{Main2}}\label{s6}

First, we consider the case of $k \leq -1/2$. By Proposition \ref{Theorem 4.4} and the relation \cite[9.233 (1)]{GR}
\begin{align}\label{MMW}
	M_{\mu,\nu}(y) = \frac{\Gamma(1+2\nu)}{\Gamma(\nu-\mu+\frac{1}{2})}e^{\pi i\mu}\mathcal{M}^+_{\mu,\nu}(y) + \frac{\Gamma(1+2\nu)}{\Gamma(\nu+\mu+\frac{1}{2})}e^{-\pi i (\nu-\mu+\frac{1}{2})}W_{\mu,\nu}(y), \quad \text{for } 2\nu \not\in \mathbb{Z}_{<0},
\end{align}
we have a Fourier expansion of the form
\begin{align*}
	F_{k,m,r}(z) &= \frac{1}{r!}\frac{\partial^r}{\partial s^r} P_{k,m}(z,s)\bigg|_{s=1-k/2}\\
	&= \sum_{j=0}^r c_{m,j}^+ u_{k,m}^{[j],+}(y) e^{2\pi imx} + \sum_{(-1)^{\lambda_k}n \equiv 0, 1(4)}\sum_{j=0}^r c_{n,j}^-u_{k,n}^{[j],-}(y)e^{2\pi inx},
\end{align*}
for $m \neq 0$. Since the exponentially growing terms come from $u_{k,m}^{[j],+}(y)$, we see that the set $\{F_{k,m,r-1}(z)\ |\ (-1)^{\lambda_k}m \equiv 0, 1 (4)\}$ spans $\mathcal{H}_k^{r,!} \oplus \mathcal{H}_k^{r-1/2,!}$ by Proposition \ref{FWE} and Theorem \ref{Theorem 4.1}. We recall that $H_k^{r,!} = H_k^{r-1/2,!} \oplus \mathcal{H}_k^{r,!}$. Here we put
\[
	\tilde{F}_{k,m,r-1}(z) := F_{k,m,r-1}(z) +\sum_{\substack{A_k < n < 0\\(-1)^{\lambda_k}n \equiv 0,1(4)}}a_k(-m,n)F_{k,n,r-1}(z).
\]
Then we have
\begin{align*}
	\xi_k \circ \Delta_k^{r-1} F_{k,m,r-1}(z) &= -(4\pi|m|)^{1-k}(k-1)^r G_{2-k,-m,0}(z), \quad \text{for } 0 \neq m>A_k\ (\Longleftrightarrow 0 \neq -m \leq A_{2-k}),\\
	\xi_k \circ \Delta_k^{r-1} F_{k,0,r-1}(z) &= -(k-1)^r G_{2-k,0,0}(z),\\
	\Delta_k^{r-1}\tilde{F}_{k,-m,r-1}(z) &= (k-1)^{r-1}\biggl(F_{k,-m,0}(z) +\sum_{\substack{A_k < n < 0\\(-1)^{\lambda_k}n \equiv 0,1(4)}}a_k(m,n)F_{k,n,0}(z)\biggr)\\
	&=(k-1)^{r-1}f_{k,m}(z), \quad \text{for }m \geq -A_k.
\end{align*}
From these results, we see that the functions $F_{k,m,r-1}(z)$ and $\tilde{F}_{k,m,r-1}(z)$ form bases for $\mathcal{H}_k^{r,!}$ and $\mathcal{H}_k^{r-1/2,!}$, respectively. As for $k \geq 5/2$ and $m > A_k$, similarly we put
\[
	\tilde{G}_{k,m,r}(z) := m^{k-1}G_{k,m,r}(z) - \sum_{\substack{0 < n \leq A_k\\(-1)^{\lambda_k}n \equiv 0,1(4)}}a_k(-n,m) n^{k-1}G_{k,n,r}(z),
\]
then we have
\begin{align*}
	\xi_k \circ \Delta_k^{r-1} \tilde{G}_{k,m,r}(z) &= (4\pi)^{1-k}(1-k)^{r-1}\biggl(F_{2-k,-m,0}(z) + \sum_{\substack{A_{2-k}<n<0\\(-1)^{\lambda_{2-k}}n \equiv 0,1(4)}}a_{2-k}(m,n)F_{2-k,n,0}(z)\biggr)\\
	&=(4\pi)^{1-k}(1-k)^{r-1}f_{2-k,m}(z).
\end{align*}

For the remaining  cases $k = 1/2$ and $3/2$, we see that
\begin{align*}
&\xi_{1/2}\circ\Delta_{1/2}^{r-1} \biggl(F_{1/2,m,r-1}(z) - 8\sqrt{m}\delta_{\square}(m)F_{1/2,0,r-1}(z)\biggr)\\
&\quad= -\biggl(-\frac{1}{2}\biggr)^{r}(4\pi m)^{1/2}\biggl(G_{3/2,-m,0}(z) - \frac{4}{\sqrt{\pi}}\delta_{\square}(m)G_{3/2,0,0}(z)\biggr).\\
&\Delta_{3/2}^{r-1}\biggl(G_{3/2,m,r-1}(z) - \frac{4}{\sqrt{\pi}}\delta_{\square}(-m)G_{3/2,0,r-1}(z)\biggr) \\
&\quad=\biggl(-\frac{1}{2}\biggr)^{r-1}\biggl(G_{3/2,m,0}(z)- \frac{4}{\sqrt{\pi}}\delta_{\square}(-m)G_{3/2,0,0}(z)\biggr),
\end{align*}
and so on. By a similar argument as above, we conclude this proof.


\section{Proof of Theorem \ref{Main4}}\label{s7}

Suppose that $d$ and $D$ are not both negative and that $dD$ is not a square number. By (\ref{key}), we have
\begin{align*}
	\begin{split}
		\widetilde{\mathrm{Tr}}_{d,D}(G_m(z,s)) = \left\{\begin{array}{ll}
			\sum_{0<n|m}\bigl(\frac{D}{n}\bigr)b_{\frac{1}{2},d}\bigl(\frac{m^2D}{n^2}, \frac{s}{2}+\frac{1}{4}\bigr), \quad &\text{if }m \neq 0,\\
			2^{s-1}\pi^{-\frac{s+1}{2}}|D|^{\frac{s}{2}}L_D(s)b_{\frac{1}{2},d}\bigl(0, \frac{s}{2}+\frac{1}{4}\bigr), \quad &\text{if }m =0.
		\end{array} \right.
	\end{split}
\end{align*}
Since the sum runs over positive divisors $n$ of $m$, and the equation $\widetilde{\mathrm{Tr}}_{d,D}(G_{-m}(z,s)) = \widetilde{\mathrm{Tr}}_{d,D}(G_m(z,s))$ holds, it is enough to prove for a non-negative integer $m$.


\subsection{The case: $d>0, D>0$, and $m > 0$}

We consider the linear combination of the Maass-Poincar\'{e} series
\[
	\mathcal{F}_{D,m}(z,s) := B(s)\Gamma\biggl(\frac{s+1}{2}\biggr)\sum_{n|m} \biggl(\frac{D}{n}\biggr)P_{\frac{1}{2}, \frac{m^2D}{n^2}}\biggl(z,\frac{s}{2}+\frac{1}{4}\biggr),
\]
where $B(s) = 2^s \Gamma(s/2)^2/(2\pi \Gamma(s))$. Then we compute the holomorphic part $\mathrm{LC}_{s=1}^r[\mathcal{F}_{D,m}(z,s)]^{\mathrm{hol}}$, where we define the $r$-th Laurent coefficient of the function $f(s)$ at $s=1$ by $\mathrm{LC}_{s=1}^r[f(s)]$. By Proposition \ref{Theorem 4.4} and the symmetric property $b_{k,m}(n,s) = b_{k,n}(m,s)$, we see the Fourier expansion of the form
\begin{align}
	 \mathcal{F}_{D,m}(z,s) &= B(s)\Gamma\biggl(\frac{s+1}{2}\biggr)\sum_{n|m} \biggl(\frac{D}{n}\biggr)\biggl[\varphi_{\frac{1}{2},\frac{m^2D}{n^2}}\biggl(z,\frac{s}{2}+\frac{1}{4}\biggr) \nonumber\\
	 &\hspace{100pt} + \sum_{d \equiv 0, 1(4)} b_{\frac{1}{2}, \frac{m^2D}{n^2}}\biggl(d, \frac{s}{2} + \frac{1}{4}\biggr)\mathcal{W}_{\frac{1}{2},d}\biggl(y, \frac{s}{2} + \frac{1}{4}\biggr)e^{2\pi idx}\biggr] \nonumber\\
	&= B(s)\Gamma\biggl(\frac{s+1}{2}\biggr)\sum_{n|m} \biggl(\frac{D}{m/n}\biggr)\biggl[\varphi_{\frac{1}{2},n^2D}\biggl(z,\frac{s}{2}+\frac{1}{4}\biggr) + b_{\frac{1}{2}, n^2D}\biggl(0, \frac{s}{2}+\frac{1}{4}\biggr)\mathcal{W}_{\frac{1}{2},0}\biggl(y, \frac{s}{2} + \frac{1}{4}\biggr)\biggr] \label{finpart}\\
	&\quad + \sum_{\substack{d \equiv 0, 1(4)\\ d \neq 0, dD = \square}} \sum_{n|m}\biggl(\frac{D}{m/n}\biggr)B(s)b_{\frac{1}{2}, n^2D}\biggl(d, \frac{s}{2} + \frac{1}{4}\biggr)\Gamma\biggl(\frac{s+1}{2}\biggr)\mathcal{W}_{\frac{1}{2},d}\biggl(y, \frac{s}{2} + \frac{1}{4}\biggr)e^{2\pi idx} \label{sqpart}\\
	&\quad + \sum_{\substack{d \equiv 0, 1(4)\\ dD \neq \square}} \mathrm{Tr}_{d,D}(G_m(z,s)) \Gamma\biggl(\frac{s+1}{2}\biggr)\mathcal{W}_{\frac{1}{2},d}\biggl(y, \frac{s}{2} + \frac{1}{4}\biggr)e^{2\pi idx}. \label{trpart}
\end{align}
Here we use the definition (\ref{modtr}) to get (\ref{trpart}).\\

First, we compute the finitely many additional part (\ref{finpart}). For the first term, by (\ref{MMW}) we have
\begin{align*}
	 B(s) &\Gamma \biggl(\frac{s+1}{2}\biggr) \varphi_{\frac{1}{2}, n^2D}\biggl(z, \frac{s}{2} + \frac{1}{4}\biggr)\\
	&= B(s)\frac{\Gamma((s+1)/2)}{\Gamma(s+1/2)}(4\pi n^2|D|y)^{-1/4}M_{\mathrm{sgn}(D)\frac{1}{4}, \frac{s}{2}-\frac{1}{4}}(4\pi n^2|D|y)e^{2\pi in^2 Dx}\\
	&=B(s)(4\pi n^2|D|y)^{-1/4}e^{2\pi in^2Dx}\\
	&\qquad \times \left\{\begin{array}{ll}
		\dfrac{\Gamma((s+1)/2)}{\Gamma(s/2)} e^{\frac{\pi i}{4}} \mathcal{M}_{\frac{1}{4}, \frac{s}{2}-\frac{1}{4}}^+(4\pi n^2Dy) + e^{-\frac{\pi is}{2}}W_{\frac{1}{4}, \frac{s}{2}-\frac{1}{4}}(4\pi n^2Dy), \quad &\text{if }D > 0,\\
		e^{-\frac{\pi i}{4}} \mathcal{M}_{-\frac{1}{4}, \frac{s}{2}-\frac{1}{4}}^+(4\pi n^2|D|y)+ \dfrac{\Gamma((s+1)/2)}{\Gamma(s/2)}e^{-\frac{\pi i(s+1)}{2}} W_{-\frac{1}{4}, \frac{s}{2}-\frac{1}{4}}(4\pi n^2|D|y), \quad &\text{if }D < 0.
	\end{array} \right.
\end{align*}
In addition by (\ref{useful1}), in our case of $D>0$ we obtain
\begin{align}\label{finhol}
	\mathrm{LC}_{s=1}^r \biggl[B(s)\Gamma\biggl(\frac{s+1}{2}\biggr) \varphi_{\frac{1}{2}, n^2D}\biggl(z, \frac{s}{2} + \frac{1}{4}\biggr)\biggr]^{\text{hol}} = \mathrm{LC}_{s=1}^r [B(s)e^{-\frac{\pi is}{2}}] q^{n^2D}.
\end{align}
Here we use the fact $W_{\mu, -\nu}(y) = W_{\mu, \nu}(y)$. For the second term, by the definition (\ref{mathcalW}), we have
\[
	\mathcal{W}_{\frac{1}{2},0}\biggl(y, \frac{s}{2}+\frac{1}{4}\biggr) = \frac{(4\pi)^{1/2}y^{1/2-s/2}}{(s-1/2)\Gamma(s/2)\Gamma(s/2+1/2)}.
\]
Then similarly we have 
\begin{align}\label{finhol2}
	\mathrm{LC}_{s=1}^r \biggl[B(s) \Gamma\biggl(\frac{s+1}{2}\biggr) b_{\frac{1}{2}, n^2D}\biggl(0, \frac{s}{2} + \frac{1}{4}\biggr) \mathcal{W}_{\frac{1}{2}, 0} \biggl(y, \frac{s}{2} + \frac{1}{4}\biggr)\biggr]^{\mathrm{hol}} = \mathrm{LC}_{s=1}^r \biggl[\frac{2^s \Gamma(s/2)}{\sqrt{\pi} \Gamma(s)}\frac{b_{\frac{1}{2}, n^2D}\bigl(0, \frac{s}{2} + \frac{1}{4}\bigr)}{s-1/2}\biggr].
\end{align}
Thus we see that
\begin{align}\label{addtr}
	\sum_{d \in \mathbb{Z}}\mathrm{Tr}_{d,D}^{\mathrm{add}}(F_{0,m,r})q^d := \mathrm{LC}_{s=1}^r[(\ref{finpart})]^{\mathrm{hol}} = \sum_{n|m} \biggl(\frac{D}{m/n}\biggr) [(\ref{finhol}) + (\ref{finhol2})].
\end{align}

We next consider the square-index part (\ref{sqpart}). By the definition of $\mathcal{W}_{1/2,n}(y,s)$ and (\ref{useful1}), we have
\begin{align}\label{sqtr}
	\begin{split}
		\sum_{\substack{0<d \equiv 0, 1(4)\\dD = \square}}d^{-1/2}\mathrm{Tr}_{d,D}^{\mathrm{sq}}(F_{0,m,r})q^d &:= \mathrm{LC}_{s=1}^r[(\ref{sqpart})]^{\mathrm{hol}}\\
		&= \sum_{\substack{0 < d \equiv 0, 1(4)\\ dD = \square}} \frac{1}{\sqrt{d}} \sum_{n|m}\biggl(\frac{D}{m/n}\biggr) \mathrm{LC}_{s=1}^r\biggl[B(s) b_{\frac{1}{2}, n^2D}\biggl(d, \frac{s}{2} + \frac{1}{4}\biggr)\biggr] q^d.
	\end{split}
\end{align}

Finally, the trace part (\ref{trpart}) is given by
\begin{align}\label{tr}
	\begin{split}
		\mathrm{LC}_{s=1}^r[(\ref{trpart})]^{\mathrm{hol}} &= \sum_{\substack{0 < d \equiv 0, 1(4)\\ dD \neq \square}} \frac{1}{\sqrt{d}} \mathrm{LC}_{s=1}^r \biggl[\mathrm{Tr}_{d,D}(G_m(z,s))\biggr] q^d\\
		&= \sum_{\substack{0 < d \equiv 0, 1(4)\\ dD \neq \square}} \frac{1}{\sqrt{d}} \mathrm{Tr}_{d,D}(F_{0,m,r}) q^d.
	\end{split}
\end{align}

Therefore, we obtain
\[
	\mathrm{LC}_{s=1}^r [\mathcal{F}_{D,m}(z,s)]^{\mathrm{hol}} = (\ref{addtr}) + (\ref{sqtr}) + (\ref{tr}).
\]


\subsection{The case: $d>0, D<0$, and $m > 0$}

Similar as above, we consider the function
\[
	\mathcal{F}_{D,m}(z,s) := \Gamma\biggl(\frac{s+1}{2}\biggr)\sum_{n|m} \biggl(\frac{D}{n}\biggr)P_{\frac{1}{2}, \frac{m^2D}{n^2}}\biggl(z,\frac{s}{2}+\frac{1}{4}\biggr).
\]
By the facts (\ref{useful2}) and $\mathcal{M}_{\mu, \nu}^+(y) = \mathcal{M}_{\mu, -\nu}^+(y)$, we see that
\begin{align*}
	(4\pi n^2|D|y)^{-1/4} e^{-\frac{\pi i}{4}} \mathcal{M}_{-\frac{1}{4}, \frac{1}{4}}^+(4\pi n^2|D|y) e^{2\pi in^2 Dx} &= (4\pi n^2|D|)^{-1/4} e^{-\frac{\pi i}{4}} (4\pi n^2|D|e^{\pi i})^{1/4} q^{n^2D}\\
	&= q^{n^2D}.
\end{align*}
Then we see that
\begin{align*}
	\mathrm{LC}_{s=1}^r [\mathcal{F}_{D,m}(z,s)]^{\mathrm{hol}} &= \sum_{n|m} \biggl(\frac{D}{m/n}\biggr) \mathrm{LC}_{s=1}^r [1] q^{n^2D}\\
	&\quad + \sum_{n|m} \biggl(\frac{D}{m/n}\biggr) 2\sqrt{\pi} \cdot \mathrm{LC}_{s=1}^r \biggl[ \frac{b_{\frac{1}{2}, n^2D}\bigl(0, \frac{s}{2} + \frac{1}{4}\bigr)}{(s-1/2)\Gamma(s/2)}\biggr]\\
	&\quad + \sum_{0 < d \equiv 0, 1 (4)} \frac{1}{\sqrt{d}} \mathrm{Tr}_{d,D}(F_{0,m,r})q^d.
\end{align*}

Note that the square-index part does not appear in this case.


\subsection{The case: $d>0, D>0$, and $m = 0$}

In this case, we take the function
\[
	\mathcal{F}_{D,0}(z,s) := 2^{s-1} \pi^{-\frac{s+1}{2}} |D|^{\frac{s}{2}} L_D(s) B(s) \Gamma\biggl(\frac{s+1}{2}\biggr) P_{\frac{1}{2}, 0}\biggl(z, \frac{s}{2} + \frac{1}{4}\biggr).
\]
By Proposition \ref{Theorem 4.4} and (\ref{Ibu}), we have the Fourier expansion
\begin{align*}
	P_{\frac{1}{2}, 0}\biggl(z, \frac{s}{2} + \frac{1}{4}\biggr) &= y^{s/2} + \frac{\zeta(2s-1)}{\zeta(2s)}\frac{2^{2-3s} \pi \Gamma(s+1/2) y^{1/2-s/2}}{(s-1/2)\Gamma(s/2)\Gamma(s/2+1/2)}\\
	& \quad + \sum_{\substack{d \equiv 0, 1(4) \\ d \neq 0}}b_{\frac{1}{2}, 0} \biggl(d,\frac{s}{2} + \frac{1}{4}\biggr) \mathcal{W}_{\frac{1}{2}, d}\biggl(y, \frac{s}{2} + \frac{1}{4}\biggr)e^{2\pi idx}.
\end{align*}

Thus we have
\begin{align*}
	\mathrm{LC}_{s=1}^r[\mathcal{F}_{D,0}(z,s)]^{\mathrm{hol}} &= \mathrm{LC}_{s=1}^r \biggl[\frac{|D|^{\frac{s}{2}} L_D(s)}{2^s \pi^{\frac{s+1}{2}}}\frac{\Gamma(s/2)\Gamma(s-1/2)}{\Gamma(s)} \frac{\zeta(2s-1)}{\zeta(2s)} \biggr]\\
	&\quad + \sum_{\substack{0 < d \equiv 0, 1 (4) \\ dD = \square}} \frac{1}{\sqrt{d}} \mathrm{LC}_{s=1}^r \biggl[2^{s-1} \pi^{-\frac{s+1}{2}} |D|^{\frac{s}{2}} L_D(s) B(s) b_{\frac{1}{2}, 0}\biggl(d, \frac{s}{2} + \frac{1}{4}\biggr)\biggr] q^d\\
	&\quad + \sum_{\substack{0 < d \equiv 0, 1 (4) \\ dD \neq \square}} \frac{1}{\sqrt{d}} \mathrm{Tr}_{d,D}(F_{0,0,r}) q^d.
\end{align*}


\subsection{The case: $d>0, D<0$, and $m = 0$}

Similarly we take
\[
	\mathcal{F}_{D,0}(z,s) := 2^{s-1} \pi^{-\frac{s+1}{2}} |D|^{\frac{s}{2}} L_D(s) \Gamma\biggl(\frac{s+1}{2}\biggr) P_{\frac{1}{2}, 0}\biggl(z, \frac{s}{2} + \frac{1}{4}\biggr).
\]
Then we have
\begin{align*}
	\mathrm{LC}_{s=1}^r[\mathcal{F}_{D,0}(z,s)]^{\mathrm{hol}} &= \mathrm{LC}_{s=1}^r \biggl[ \frac{|D|^{\frac{s}{2}} L_D(s)}{2^{2s-1} \pi^{\frac{s-1}{2}}} \frac{\Gamma(s-1/2)}{\Gamma(s/2)} \frac{\zeta(2s-1)}{\zeta(2s)}\biggr] + \sum_{0 < d \equiv 0, 1 (4)} \frac{1}{\sqrt{d}} \mathrm{Tr}_{d,D}(F_{0,0,r}) q^d.
\end{align*}


\subsection{The case: $d<0, D>0$, and $m > 0$}

We consider the linear combination
\[
	\mathcal{G}_{D,m}(z,s) := -\Gamma\biggl(\frac{s}{2}+1\biggr)\sum_{n|m}\biggl(\frac{D}{n}\biggr)\bigg|\frac{m}{n}\bigg|\sqrt{D}P_{\frac{3}{2}, -\frac{m^2D}{n^2}}\biggl(z,\frac{s}{2} + \frac{1}{4}\biggr).
\]
By Lemma \ref{bkm}, we have
\begin{align*}
	P_{\frac{3}{2}, -\frac{m^2 D}{n^2}} \biggl(z, \frac{s}{2} + \frac{1}{4}\biggr) &= \varphi_{\frac{3}{2}, -\frac{m^2D}{n^2}}\biggl(z, \frac{s}{2} + \frac{1}{4}\biggr) + \frac{ b_{\frac{3}{2}, -\frac{m^2D}{n^2}}\bigl(0, \frac{s}{2} + \frac{1}{4}\bigr)(4\pi)^{-\frac{1}{2}}y^{-s/2}}{(s-1/2)\Gamma(s/2-1/2)\Gamma(s/2+1)}\\
	&\quad - \sum_{\substack{d \equiv 0 ,3 (4) \\ d \neq 0}} \bigg| \frac{m^2}{n^2} dD\bigg|^{-\frac{1}{2}} b_{\frac{1}{2}, \frac{m^2D}{n^2}}\biggl(-d, \frac{s}{2} + \frac{1}{4}\biggr) \mathcal{W}_{\frac{3}{2}, d} \biggl(z, \frac{s}{2} + \frac{1}{4}\biggr) e^{2\pi i dx}.
\end{align*}
Here by (\ref{MMW}) again, we have
\begin{align*}
	\Gamma&\biggl(\frac{s}{2} + 1\biggr)\varphi_{\frac{3}{2}, -n^2D}\biggl(z, \frac{s}{2} + \frac{1}{4}\biggr)\\
	&= \frac{\Gamma(s/2+1)}{\Gamma(s+1/2)} (4\pi n^2 D y)^{-3/4} M_{-\frac{3}{4}, \frac{s}{2}-\frac{1}{4}}(4\pi n^2Dy) e^{-2\pi i n^2Dx}\\
	&= (4\pi n^2 D y)^{-3/4}  \biggl[e^{-\frac{3\pi i}{4}} \mathcal{M}_{-\frac{3}{4}, \frac{s}{2} - \frac{1}{4}}^+ (4\pi n^2Dy) + \dfrac{\Gamma(s/2+1)}{\Gamma(s/2-1/2)} e^{-\pi i(s/2+1)} W_{-\frac{3}{4}, \frac{s}{2} - \frac{1}{4}}(4\pi n^2Dy)\biggr] e^{-2\pi in^2Dx}.
\end{align*}
Thus we obtain
\begin{align*}
	\mathrm{LC}_{s=1}^r [\mathcal{G}_{D,m}(z,s)]^{\mathrm{hol}} &= -\sum_{n|m} \biggl(\frac{D}{m/n}\biggr) n \sqrt{D} \mathrm{LC}_{s=1}^r[1] q^{-n^2D} + \sum_{0 > d \equiv 0, 1 (4)} \mathrm{Tr}_{d, D}(F_{0,m,r}) q^{-d}.
\end{align*}


\subsection{The case: $d<0, D>0$, and $m = 0$}

Finally, we consider the function
\[
	\mathcal{G}_{D,0}(z,s) := -2^{s-2} \pi^{-\frac{s}{2}-1} |D|^{\frac{s}{2}} L_D(s) \Gamma\biggl(\frac{s}{2}+1\biggr) P_{\frac{3}{2}, 0} \biggl(z, \frac{s}{2} + \frac{1}{4} \biggr).
\]
By Proposition \ref{Theorem 4.4}, Lemma \ref{bkm}, and (\ref{Ibu}), we have
\begin{align*}
	P_{\frac{3}{2}, 0}\biggl(z, \frac{s}{2} + \frac{1}{4}\biggr) &= y^{\frac{s-1}{2}} - \pi^{\frac{3}{2}} 2^{3-3s}  \frac{\zeta(2s-1)}{\zeta(2s)} \frac{\Gamma(s-1/2)(4\pi)^{-1/2}y^{-s/2}}{\Gamma(s/2-1/2)\Gamma(s/2+1)}\\
	&\quad - \sum_{\substack{d \equiv 0, 3 (4) \\ d \neq 0}} \frac{2\sqrt{\pi}}{\sqrt{|d|}}b_{\frac{1}{2}, 0}\biggl(-d, \frac{s}{2} + \frac{1}{4}\biggr) \mathcal{W}_{\frac{3}{2}, d}\biggl(y, \frac{s}{2} + \frac{1}{4} \biggr) e^{2\pi idx}.
\end{align*}
Therefore, we see that
\begin{align*}
	\mathrm{LC}_{s=1}^r [\mathcal{G}_{D,0}(z,s)]^{\mathrm{hol}} &= -\mathrm{LC}_{s=1}^r\bigg[2^{s-2} \pi^{-\frac{s}{2}-1} |D|^{\frac{s}{2}} L_D(s) \Gamma\biggl(\frac{s}{2}+1\biggr)\bigg] + \sum_{0 > d \equiv 0, 1 (4)} \mathrm{Tr}_{d,D}(F_{0,0,r}) q^{-d}.
\end{align*}

These conclude the proof of Theorem \ref{Main4}.\\

As an application, for a fundamental discriminant $D \neq 1$, we immediately see that
\begin{align*}
	\mathrm{LC}_{s=1}^{-1} [\mathcal{F}_{D,0}(z,s)]^{\text{hol}} &= \frac{\sqrt{|D|} L_D(1)}{\pi} \cdot 2F_{1/2,0,-1}(z) = \frac{3 \sqrt{|D|}L_D(1)}{2\pi^2} + \sum_{\substack{0 < d \equiv 0,1 (4) \\ dD \neq \square}} \frac{3}{\pi \sqrt{d}} \mathrm{Tr}_{d,D}(1) q^d,\\
	\mathrm{LC}_{s=1}^{-1} [\mathcal{G}_{D,0}(z,s)]^{\text{hol}} &= 0 =\frac{3}{\pi} \sum_{0 > d \equiv 0, 1(4)} \mathrm{Tr}_{d,D}(1) q^{-d}.
\end{align*}
Here we recall that $F_{0,0,-1}(z) = 3/\pi$. On the other hand, by Lemma \ref{lemma5.2} we have $F_{1/2,0,-1}(z) = (3/4\pi)\theta(z)$. Therefore for $D \neq 1$ we have
\begin{align}\label{twtrace}
	\mathrm{Tr}_{d,D}(1) = \left\{\begin{array}{ll}
		\dfrac{\sqrt{|dD|} L_D(1)}{\pi}, \quad &\text{if } d = \square,\\
		0, \quad &\text{if } d \neq \square.
	\end{array} \right.
\end{align}


\section{Proof of Corollary \ref{MainCor}}\label{s8}

Finally, as an example, we compute the Fourier coefficients of the holomorphic part of $F_{1/2,0,0}(z)$. Throughout this section, we assume that a positive integer $d$ and the product $dD$ are not square numbers. By Theorem \ref{Main2}, the function $F_{1/2,0,0}(z)$ is a polyharmonic Maass form of weight $1/2$ and depth $3/2$. By the definition we have 
\begin{align*}
	F_{1/2,0,0}(z) &= \mathrm{LC}_{s=3/4}^0 [P_{1/2,0}(z,s)],
\end{align*}
where we also define the $r$-th Laurent coefficient of the function $f(s)$ at $s=3/4$ by $\mathrm{LC}_{s=3/4}^r[f(s)]$. By Proposition \ref{Theorem 4.4}, (\ref{useful1}), and (\ref{key2}), the $d$-th Fourier coefficient of its holomorphic part is given by
\begin{align}\label{LR}
	\mathrm{LC}_{s=3/4}^0 \bigg[\frac{b_{1/2,0}(d,s)}{d^{1/2} \Gamma(s+1/4)}\bigg] &= \mathrm{LC}_{s=3/4}^0 \bigg[ \frac{2^{3/2-2s} \pi^{s+1/4} |D|^{-s+1/4}}{d^{1/2} \Gamma(s+1/4)} \frac{\widetilde{\mathrm{Tr}}_{d,D}(G_0(z,2s-1/2))}{L_D(2s-1/2)} \bigg].
\end{align}
Here it is known that
\begin{align*}
	L_D(2s-1/2)^{-1} &= \left\{\begin{array}{ll}
		2(s-3/4) + O((s-3/4)^2) &\text{if } D=1,\\
		L_D(1)^{-1} + O(s-3/4) &\text{if } D \neq 1,\\
	\end{array} \right.\\
	G_0(z,2s-1/2) &= \frac{3}{2\pi} \frac{1}{s-3/4} -\frac{3}{\pi} \log(y|\eta(z)|^4) + C + O(s-3/4),
\end{align*}
where $C = (6/\pi)(\gamma - \log 2 - 6\zeta'(2)/\pi^2)$. The key point of this proof is that the left-hand side of (\ref{LR}) does not depend on $D$. We recall that $\mathrm{Tr}_{d,D}(1) = 0$ holds for $D \neq 1$ by (\ref{twtrace}). From these properties, we immediately see that if $D=1$,
\[
	\mathrm{LC}_{s=3/4}^0 \bigg[\frac{b_{1/2,0}(d,s)}{d^{1/2} \Gamma(s+1/4)}\bigg] = \frac{\pi}{\sqrt{d}} \frac{2}{B(1)} \mathrm{Tr}_{d,1}(3/2\pi) = \frac{3}{\sqrt{d}} \mathrm{Tr}_{d,1}(1),
\]
while for $D < 0$ we have
\[
	\mathrm{LC}_{s=3/4}^0 \bigg[\frac{b_{1/2,0}(d,s)}{d^{1/2} \Gamma(s+1/4)}\bigg] = \frac{3}{\sqrt{|dD|} L_D(1)} \mathrm{Tr}_{d,D}(-\log(y|\eta(z)|^4)),
\]
and for $D>1$ we have
\begin{align*}
	\mathrm{LC}_{s=3/4}^0 \bigg[\frac{b_{1/2,0}(d,s)}{d^{1/2} \Gamma(s+1/4)}\bigg] &= \frac{\pi}{\sqrt{|dD|}} \frac{1}{B(1)} \frac{\mathrm{Tr}_{d,D}((-3/\pi)\log(y|\eta(z)|^4))}{L_D(1)}\\
	&= \frac{3}{\sqrt{dD} L_D(1)} \mathrm{Tr}_{d,D}(-\log(y|\eta(z)|^4)).
\end{align*}
Comparing these right-hand sides, we have Corollary \ref{MainCor}.



\begin{thebibliography}{99}
\bibitem{AAS} S. Ahlgren, N. Andersen, D. Samart,
\emph{A polyharmonic Maass form of depth $3/2$ for $\mathrm{SL}_2(\mathbb{Z})$}, J. Math. Anal. Appl., to appear, arXiv:1707.06117.
\bibitem{AS1} C. Alfes-Neumann, M. Schwagenscheidt,
\emph{On a theta lift related to the Shintani lift}, Adv. Math., 328, (2018), 858--889.
\bibitem{AS2} C. Alfes-Neumann, M. Schwagenscheidt,
\emph{Shintani theta lifts of harmonic Maass forms}, arXiv:1712.04491.
\bibitem{ALR} N. Andersen, J. C. Lagarias, R. C. Rhoades,
\emph{Shifted polyharmonic Maass forms for $PSL(2,\mathbb{Z})$}, Acta Arith., to appear, arXiv:1708.01278.
\bibitem{BDR} K. Bringmann, N. Diamantis, M. Raum,
\emph{Mock period functions, sesquiharmonic Maass forms, and non-critical values of $L$-functions}, Adv. Math., 233, (2013), 115--134.
\bibitem{BFOR} K. Bringmann, A. Folsom, K. Ono, L. Rolen,
\emph{Harmonic Maass Forms and Mock Modular Forms: Theory and Applications}, American Mathematical Society Colloquium Publications, 64. American Mathematical Society, Providence, RI, 2017. xv+391 pp. 
\bibitem{BF2} J. H. Bruinier, J. Funke,
\emph{On two geometric theta lifts}, Duke Math. J., 125, (1), (2004), 45--90.
\bibitem{BF} J. H. Bruinier, J. Funke,
\emph{Traces of CM values of modular functions}, J. reine angew Math., 594, (2006), 1--33.
\bibitem{BFI} J. H. Bruinier, J. Funke, \"{O}. Imamo\={g}lu,
\emph{Regularized theta liftings and periods of modular functions}, J. Reine Angew. Math., 703, (2015), 43--93.
\bibitem{DIT2} W. Duke, \"{O}. Imamo\={g}lu, \'{A}. T\'{o}th,
\emph{Cycle integrals of the $j$-function and mock modular forms}, Annals of Math., 173, (2011), 947--981.
\bibitem{DIT} W. Duke, \"{O}. Imamo\={g}lu, \'{A}. T\'{o}th,
\emph{Regularized inner products of modular functions}, Ramanujan J. 41, (2016), 13--29.
\bibitem{DIT5} W. Duke, \"{O}. Imamo\={g}lu, \'{A}. T\'{o}th,
\emph{Kronecker's first limit formula, revisited}, Res. Math. Sci. 5, no. 2, (2018), 1--21.
\bibitem{DJ2} W. Duke, P. Jenkins,
\emph{On the zeros and coefficients of certain weakly holomorphic modular forms}, Pure Appl. Math., W. 4, (4), (2008), 1327--1340.
\bibitem{DJ} W. Duke, P. Jenkins,
\emph{Integral traces of singular values of weak Maass form}, Algebra and Number Theory, 2, (5), (2008), 573--593.
\bibitem{Fay} J. Fay,
\emph{Fourier coefficients of the resolvent for a Fuchsian group}, J. Reine Angew. Math., 294, (1977), 143--203.
\bibitem{GR} I. S. Gradshteyn, I. M. Ryzhik,
\emph{Table of integrals, series, and products}, Translated from the Russian. Sixth edition. Translation edited and with a preface by Alan Jeffrey and Daniel Zwillinger. Academic Press, Inc., San Diego, CA, (2000), xlvii+1163 pp. 
\bibitem{GKZ} B. Gross, W. Kohnen, D. Zagier,
\emph{Heegner points and Derivatives of $L$-series, II}, Math. Ann., 278, (1987), 497--562.
\bibitem{HZ} F. Hirzebruch, D. Zagier,
\emph{Intersection numbers of curves on Hilbert modular surfaces and modular foms of Nebentypus}, Inv. Math., 36, (1976), 57--113.
\bibitem{IS} T. Ibukiyama, H. Saito,
\emph{On zeta functions associated to symmetric matrices, II: Functional equations and special values}, Nagoya Math. J., 208, (2012), 265--316.
\bibitem{JKK} D. Jeon, S.-Y. Kang, C. H. Kim,
\emph{Weak Maass-Poincar\'{e} series and weight $3/2$ mock modular forms}, J. Num. Theory, 133, (2013), 2567--2587.
\bibitem{JKK4} D. Jeon, S.-Y. Kang, C. H. Kim,
\emph{Cycle integrals of a sesqui-harmonic Maass form of weight zero}, J. Number Theory, 141, (2014), 92--108.
\bibitem{JKK3} D. Jeon, S.-Y. Kang, C. H. Kim,
\emph{Zagier-lift type arithmetic in harmonic weak Maass forms}, J. Number Theory, 169, (2016), 227--249.
\bibitem{JKK2} D. Jeon, S.-Y. Kang, C. H. Kim,
\emph{Bases on spaces of harmonic weak Maass forms and Shintani lifts on harmonic weak Maass forms}, preprint.
\bibitem{Kob} N. Koblitz,
\emph{Introduction to elliptic curves and modular forms}, Second edition, Graduate Texts in Math., 97, Springer-Verlag, New York, (1993), x+248 pp.
\bibitem{Kohnen} W. Kohnen,
\emph{Fourier coefficients of modular forms of half-integral weight}, Math. Ann., 271, (1985), 237--268.
\bibitem{LR} J. C. Lagarias, R. C. Rhoades,
\emph{Polyharmonic Maass forms for $\mathrm{PSL}(2,\mathbb{Z})$}, Ramanujan J., 41, (2016), 191--232.
\bibitem{MOS} W. Magnus, F. Oberhettinger, R. Soni, 
\emph{Formulas and theorems for the special functions of mathematical physics}, Third enlarged edition. Die Grundlehren der mathematischen Wissenschaften, Band 52 Springer-Verlag New York, Inc., New York, (1966), viii+508 pp. 
\bibitem{M} T. Matsusaka,
\emph{Polyharmonic weak Maass forms of higher depth for $\mathrm{SL}_2(\mathbb{Z})$}, Ramanujan Journal, to appear, arXiv:1801.02146.
\bibitem{MM} M. H. Mertens,
\emph{Mock modular forms and class number relations}, Res. Math. Sci., 1, (2014), 1--16.
\bibitem{N} D. Niebur,
\emph{A class of nonanalytic automorphic functions}, Nagoya Math. J., 52, (1973), 133--145.
\bibitem{P} H. Petersson,
\emph{Konstruktion der Modulformen und der zu gewissen Grenzkreisgruppen geh\"{o}rigen automorphen Formen von positiver reeller Dimension und die vollst\"{s}ndige Bestimmung ihrer Fourierkoeffizienten}, S.-B. Heidelberger Akad. Wiss. Math. Nat. Kl. (1950), 417--494.
\bibitem{Z2} D. Zagier,
\emph{Nombres de classes et formes modulaires de poids $3/2$}, C. R. Acad. Sci. Paris S\'{e}r., A-B 281, (1975), no. 21, Ai, 883--886.
\bibitem{Z} D. Zagier,
\emph{Traces of singular moduli}, Motives, polylogarithms and Hodge theory, Part I (Irvine, CA, 1998), Int. Press Lect. Ser., 3, Int. Press. Somerville MA, (2002), 211--244.
\end{thebibliography}
\end{document}